\newtheorem{theorem}{Theorem}[section]
\newtheorem{proposition}[theorem]{Proposition}
\newtheorem{corollary}[theorem]{Corollary}
\newtheorem{lemma}[theorem]{Lemma}
\theoremstyle{definition}
\newtheorem{definition}[theorem]{Definition}
\newtheorem{remark}[theorem]{Remark}
\newcommand{\PP}{\mathbb{P}}
\newcommand{\QQ}{\mathbb{Q}}
\newcommand{\CC}{\mathbb{C}}
\newcommand{\ZZ}{\mathbb{Z}}
\newcommand{\cO}{\mathcal{O} }
\newcommand{\cA}{\mathcal{A} }
\newcommand{\cI}{\mathcal{I}}
\newcommand{\proj}{\mathrm{Proj}\;}
\def\mapright#1{\,\smash{\mathop{\longrightarrow}\limits^{#1}}\,}
\def\Mzn{\overline{M}_{0,n} }
\def\Mzez{\overline{M}_{0,n\cdot \epsilon_0} }
\def\Mzeo{\overline{M}_{0,n\cdot \epsilon_1} }
\def\Mzet{\overline{M}_{0,n\cdot \epsilon_2} }
\def\Mzemt{\overline{M}_{0,n\cdot \epsilon_{m-2}} }
\def\Mzemo{\overline{M}_{0,n\cdot \epsilon_{m-1}} }
\def\Mzemth{\overline{M}_{0,n\cdot \epsilon_{m-3}} }
\def\Mze{\overline{M}_{0,n\cdot \epsilon} }
\def\Mzek{\overline{M}_{0,n\cdot \epsilon_k} }
\def\Mzeko{\overline{M}_{0,n\cdot \epsilon_{k-1}} }
\def\git{/\!/ }
\begin{document}
\title[Moduli spaces of weighted pointed
stable rational curves]{Moduli spaces of weighted pointed stable
rational curves via GIT}
\date{March 2010}

\author{Young-Hoon Kiem}
\address{Department of Mathematics and Research Institute
of Mathematics, Seoul National University, Seoul 151-747, Korea}
\email{kiem@math.snu.ac.kr}

\author{Han-Bom Moon}
\address{Department of Mathematics, Seoul National University, Seoul 151-747, Korea}
\email{spring-1@snu.ac.kr}

\thanks{Partially supported by an NRF grant}

\begin{abstract}
We construct the Mumford-Knudsen space $\Mzn$ of $n$-pointed
stable rational curves by a sequence of explicit blow-ups from the
GIT quotient $(\PP^1)^n\git SL(2)$ with respect to the symmetric
linearization $\cO(1,\cdots,1)$. The intermediate blown-up spaces
turn out to be the moduli spaces of weighted pointed stable curves
$\Mze$ for suitable ranges of $\epsilon$. As an application, we
provide a new unconditional proof of M. Simpson's Theorem about
the log canonical models of $\Mzn$. We also give a basis of the
Picard group of $\Mze$.
\end{abstract}

\maketitle

\section{Introduction}\label{sec1}
Recently there has been a tremendous amount of interest in the
birational geometry of moduli spaces of stable curves. See for
instance \cite{AlexSwin, FedoSmyt,Hassett,Kapranov,Keel,Li,
Mustata, Simpson} for the genus 0 case only. Most prominently, it
has been proved in \cite{AlexSwin, FedoSmyt,Simpson} that the log
canonical models for $(\Mzn, K_{\Mzn}+\alpha D)$, where $D$ is the
boundary divisor and $\alpha$ is a rational number, give us
Hassett's moduli spaces $\Mze$ of weighted pointed stable curves
with \emph{symmetric} weights $n\cdot \epsilon =(\epsilon,\cdots,
\epsilon)$. See \S\ref{sec2.1} for the definition of $\Mze$ and
Theorem \ref{thm1.2} below for a precise statement. The purpose of
this paper is to prove that actually all the moduli spaces $\Mze$
can be constructed by explicit blow-ups from the GIT quotient
$(\PP^1)^n\git SL(2)$ with respect to the symmetric linearization
$\cO(1,\cdots,1)$ where $SL(2)$ acts on $(\PP^1)^n$ diagonally.
More precisely, we prove the following.
\begin{theorem}\label{thm1.1} There is a sequence of blow-ups
\begin{equation}\label{eq1}
\Mzn=\Mzemt\to \Mzemth\to \cdots \to \Mzet\to \Mzeo\to
(\PP^1)^n\git SL(2) \end{equation} where
$m=\lfloor\frac{n}{2}\rfloor$ and $\frac1{m+1-k}<\epsilon_k\le
\frac1{m-k}$. Except for the last arrow when $n$ is even, the
center for each blow-up is a union of transversal smooth
subvarieties of same dimension.  When $n$ is even, the last arrow
is the blow-up along the singular locus which consists of
$\frac12\binom{n}{m}$ points in $(\PP^1)^n\git SL(2)$, i.e.
$\Mzeo$ is Kirwan's partial desingularization (see \cite{Kirwan})
of the GIT quotient $(\PP^1)^{2m}\git SL(2)$.\end{theorem}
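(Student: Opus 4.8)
The plan is to traverse the chain \eqref{eq1} one arrow at a time, using Hassett's reduction morphisms together with a wall-crossing analysis. For weight data $A\ge B$ there is a birational reduction morphism $\overline{M}_{0,A}\to\overline{M}_{0,B}$, so every arrow in \eqref{eq1} exists once one checks the weights decrease and, at the far right, compares with the GIT quotient. The first point to record is what happens at a wall: the only $\epsilon$ at which the symmetric Hassett space changes is $\epsilon=1/j$ for an integer $j\ge 3$, and as $\epsilon$ drops past $1/j$ the sole new phenomenon is that a cluster of exactly $j$ marked points, previously forced to sprout a rational tail, may now sit at one point. In particular $\Mzn=\overline{M}_{0,n\cdot 1}\to\Mzemt$ is an isomorphism, since for $\epsilon_{m-2}\in(1/3,1/2]$ no three points may coincide and resolving each pair-collision by its unique $3$-pointed tail recovers $\overline{M}_{0,n}$. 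For an intermediate arrow $\Mzek\to\Mzeko$ put $j=m+1-k$: this morphism contracts, for each $j$-element $I\subseteq\{1,\dots,n\}$, the boundary divisor $\delta_I\subset\Mzek$ whose general point carries a tail with exactly the markings in $I$, onto the locus $Z_I\subset\Mzeko$ where those $j$ points coincide, and it is an isomorphism over the complement of $\bigcup_I Z_I$ because away from such clusters the two moduli problems agree verbatim.

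Next I would verify that $Z:=\bigsqcup_{|I|=j}Z_I$ is a transversal union of smooth subvarieties of equal dimension. Clamping the $j$ markings of $I$ to a single point of weight $j\epsilon_{k-1}\le 1$ identifies $Z_I$ with the Hassett space $\overline{M}_{0,(n-j)\cdot\epsilon_{k-1}\times(j\epsilon_{k-1})}$, which is admissible since $n\epsilon_{k-1}>2$ throughout the ranges in play; hence $Z_I$ is smooth of dimension $n-j-2$, i.e.\ of codimension $j-1$. For distinct $I_1,I_2$ of size $j$, a curve in $Z_{I_1}\cap Z_{I_2}$ would force $\ge j+1$ markings onto at most $2$ points, and $(j+1)\epsilon_{k-1}>1$ rules out $I_1\cap I_2\ne\emptyset$; thus $I_1\cap I_2=\emptyset$, the two clusters sit at distinct points, and since the first-order deformations dispersing one cluster are independent of those dispersing the other, $Z_{I_1}$ and $Z_{I_2}$ meet transversally, with $Z_{I_1}\cap Z_{I_2}$ of the expected codimension $2(j-1)$; the same holds for higher intersections. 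In particular $\mathrm{Bl}_Z\Mzeko$ is smooth.

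Now I would identify $\Mzek$ with $\mathrm{Bl}_Z\Mzeko$ by the universal property of blow-ups. In $\Mzek$ the preimage of $Z$ is exactly the Cartier divisor $\sum_{|I|=j}\delta_I$, so the reduction morphism factors through a proper birational morphism $f\colon\Mzek\to\mathrm{Bl}_Z\Mzeko$ over $\Mzeko$ that is an isomorphism over $\Mzeko\setminus Z$. Over a general point of $Z_I$ the fibre of $\Mzek\to\Mzeko$ is the set of ways to distribute the $j$ coincident markings along a tail, namely $\overline{M}_{0,j\cdot\epsilon_k\times\{1\}}\cong\PP^{j-2}$, which is precisely the blow-up fibre $\PP(N_{Z_I/\Mzeko})\cong\PP^{j-2}$; over a point lying transversally on $r$ of the $Z_I$, both fibres are the corresponding product of $r$ copies of $\PP^{j-2}$. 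A dispersal of a cluster along a tail determines, and is determined by, the normal direction along which that cluster is pulled apart, so $f$ is bijective on all these fibres; being quasi-finite, proper, and birational onto a smooth (hence normal) variety, $f$ is an isomorphism. Thus $\Mzek=\mathrm{Bl}_Z\Mzeko$. The same argument covers the last arrow when $n$ is odd: for suitable $\epsilon_0$ one has $\overline{M}_{0,n\cdot\epsilon_0}\cong(\PP^1)^n\git SL(2)$ — both parametrize $n$ points of multiplicity $\le m$ on $\PP^1$ modulo $PGL(2)$ with no rational tails, and this quotient is smooth since here stable $=$ semistable — so $\Mzeo\to(\PP^1)^n\git SL(2)$ is the blow-up along the transversal union of the $\binom nm$ loci where $m$ markings collide.

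Finally, when $n=2m$ is even the clamping data for $|I|=m$ becomes inadmissible — its total weight degenerates to $2$ — so the would-be centers $Z_I$ and $Z_{I^c}$ collapse to a single point $y_{\{I,I^c\}}\in(\PP^1)^{2m}\git SL(2)$, represented by the polystable configuration with $I$ at one point and $I^c$ at another; these $\tfrac12\binom nm$ points are exactly the locus of nontrivial ($1$-dimensional) stabilizers, hence the singular locus where Kirwan blows up. A local slice computation shows that near $y_{\{I,I^c\}}$ the quotient is the affine cone over the Segre variety $\PP^{m-2}\times\PP^{m-2}$ (\'etale-locally $(\CC^{m-1}\oplus\CC^{m-1})\git\CC^*$ with opposite weights), whose blow-up at the vertex, with exceptional divisor $\PP^{m-2}\times\PP^{m-2}$, is Kirwan's partial desingularization; on the moduli side the locus of $\Mzeo$ over $y_{\{I,I^c\}}$ is $\overline{M}_{0,m\cdot\epsilon_1\times\{1\}}\times\overline{M}_{0,m\cdot\epsilon_1\times\{1\}}\cong\PP^{m-2}\times\PP^{m-2}$, and the quasi-finiteness argument above shows $\Mzeo\to(\PP^1)^{2m}\git SL(2)$ carries it isomorphically onto that exceptional divisor, so $\Mzeo$ is Kirwan's partial desingularization. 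I expect the main obstacle throughout to be exactly these local comparisons — presenting $\Mzeko$ analytically near $Z_I$, and the quotient near $y_{\{I,I^c\}}$, and matching them with the blow-up charts — since everything else is bookkeeping with Hassett's combinatorial stability conditions; one settles them either by hand, through the deformation theory of the nodal weighted curves involved, or by sidestepping them: build the prospective universal curve directly on $\mathrm{Bl}_Z\Mzeko$ (from the universal curve over $\Mzeko$ by blowing up the clusters) and invoke the moduli property of $\Mzek$.
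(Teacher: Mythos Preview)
Your strategy is sound and proves the theorem as stated, but it follows a genuinely different route from the paper's. The paper does not analyze Hassett's reduction morphisms $\Mzek\to\Mzeko$ directly. Instead it realizes each $\Mzek$ as a GIT quotient $F_{n-m+k}\git G$, where the $F_k$ are the Mustata--Mustata intermediate moduli spaces of \emph{parameterized} pointed rational curves, obtained as successive blow-ups of $(\PP^1)^n$ en route to the Fulton--MacPherson compactification $\PP^1[n]$. Transversality of the centers is established upstairs on the $F_k$ via Li's theory of building sets and wonderful compactifications (Lemma~\ref{lem3-1}); Kirwan's lemmas (\S\ref{sec2.2}) then show that blow-up and GIT quotient commute, so the induced maps on quotients are again blow-ups along transversal centers. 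The identification $F_{n-m+k}\git G\cong\Mzek$ is made not by the universal property of blow-ups but by exhibiting the forgetful morphism and matching Picard numbers (both sides being smooth). In the even case the paper passes through the partial desingularization $\tilde F_{m+k}$ and shows, via Luna's slice theorem, that for $k\ge1$ the map $\tilde F_{m+k}\git G\to F_{m+k}\git G$ is an isomorphism because the blow-up center is already a smooth divisor.

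What each approach buys: the paper's detour through GIT yields, as a byproduct, the identification $\Mzek\cong F_{n-m+k}\git G$, which is the engine for the applications in \S\ref{sec6} and \S\ref{sec5}; and transversality comes packaged from Li's machinery rather than from ad hoc deformation arguments on Hassett spaces. Your route is more self-contained and stays entirely within Hassett's framework, but the two points you correctly flag as the main obstacles --- transversality of the $Z_I$ inside $\Mzeko$, and the fibrewise bijectivity of $f$ (equivalently, that the inverse image of $\mathcal{I}_Z$ in $\Mzek$ is the invertible ideal of $\sum_I\delta_I$) --- must be verified by hand via the deformation theory of the weighted nodal curves involved, whereas the paper outsources these to \cite{Li} and \cite{Kirwan}. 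Both arguments are valid; the paper's gives more structure for free, yours is more direct for the bare statement of Theorem~\ref{thm1.1}.
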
 If the
center of a blow-up is the transversal union of smooth
subvarieties in a nonsingular variety, the result of the blow-up
is isomorphic to that of the sequence of smooth blow-ups along the
irreducible components of the center in any order (see
\S\ref{sec2.3}). So each of the above arrows can be decomposed
into the composition of smooth blow-ups along the irreducible
components.

As an application of Theorem \ref{thm1.1}, we give a new proof of
the following theorem of M. Simpson (\cite{Simpson}) without
relying on Fulton's conjecture.
\begin{theorem}\label{thm1.2}
Let $\alpha$ be a rational number satisfying $\frac{2}{n-1} < \alpha
\le 1$ and let $D=\Mzn-M_{0,n}$ denote the boundary divisor. Then
the log canonical model $$\Mzn(\alpha) = \proj\left(\bigoplus_{l \ge
0} H^0(\Mzn, \cO(\lfloor l (K_{\Mzn} +\alpha D) \rfloor))\right)
$$
satisfies the following:
\begin{enumerate} \item If $\frac{2}{m-k+2} < \alpha \le \frac{2}{m-k+1}$ for $1\le k\le
m-2$, then $\Mzn(\alpha) \cong \Mzek$.\item  If $\frac{2}{n-1} <
\alpha \le \frac{2}{m+1}$, then $\Mzn(\alpha) \cong (\PP^1)^n \git
G$ where the quotient is taken with respect to the symmetric
linearization $\cO(1,\cdots,1)$.
\end{enumerate}
\end{theorem}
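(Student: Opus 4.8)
### Proof proposal for Theorem~\ref{thm1.2}

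The plan is to deduce the statement from Theorem~\ref{thm1.1} by a standard argument identifying log canonical models with the targets of explicit blow-downs. First I would recall Hassett's description of the natural reduction morphisms $\rho_{\epsilon,\epsilon'}\colon \Mze\to \overline{M}_{0,n\cdot\epsilon'}$ for $\epsilon\ge\epsilon'$, together with the formula comparing the canonical and boundary divisors on $\Mze$ with their pullbacks; this lets one rewrite $K_{\Mzn}+\alpha D$ as the pullback of an ample class from the appropriate $\Mzek$ plus an effective exceptional combination, exactly in the weight ranges $\frac1{m+1-k}<\epsilon_k\le\frac1{m-k}$ appearing in Theorem~\ref{thm1.1}. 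The key input is that each arrow in \eqref{eq1} is a blow-up along a smooth (or, in the last even step, a finite) center, so the relative Picard groups and discrepancies are completely explicit: one knows precisely which exceptional divisors appear and with what coefficients.

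The core computation is then to show that for $\alpha$ in the range $\frac{2}{m-k+2}<\alpha\le\frac{2}{m-k+1}$, the $\QQ$-divisor $K_{\Mzn}+\alpha D$ is the pullback under the composite blow-down $\Mzn\to\Mzek$ of an \emph{ample} $\QQ$-divisor, modulo an effective divisor supported on the exceptional loci of that composite. Granting this, the projection formula gives
\[
H^0(\Mzn,\cO(\lfloor l(K_{\Mzn}+\alpha D)\rfloor))=H^0(\Mzek,\cO(\lfloor l A_\alpha\rfloor))
\]
for all $l\ge 0$, where $A_\alpha$ is the ample class, and hence $\Mzn(\alpha)\cong\proj\bigoplus_l H^0(\Mzek,\cO(lA_\alpha))\cong\Mzek$ since an ample class on a projective variety realizes that variety as its own $\proj$. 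For part (2), the same argument applies with $\Mzek$ replaced by the GIT quotient $(\PP^1)^n\git SL(2)$, using that $\Mzeo\to(\PP^1)^n\git SL(2)$ is Kirwan's partial desingularization and that the descended linearization $\cO(1,\dots,1)$ is ample on the quotient; the threshold $\alpha=\frac{2}{m+1}$ is exactly where the last exceptional divisor(s) get contracted. The boundary cases $\alpha=\frac{2}{m-k+1}$ require a small separate check that the model does not jump, which follows because on the boundary of the chamber the pulled-back class is still semiample with the same associated $\proj$.

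I expect the main obstacle to be the positivity bookkeeping: one must verify that the class $A_\alpha$ obtained after subtracting off the exceptional contributions is genuinely ample on $\Mzek$ (not merely nef), uniformly across each open chamber for $\alpha$, and that the effective error term is exactly the sum of exceptional divisors with nonnegative coefficients so that taking $\lfloor l(\cdot)\rfloor$ does not lose sections. Concretely this means computing the discrepancies of each blow-up in \eqref{eq1} against the boundary and canonical classes and checking the resulting linear inequalities in $\alpha$; here I would lean on Hassett's divisor class computations on $\Mze$ and on the explicit description of the centers in Theorem~\ref{thm1.1} to pin down every coefficient. A secondary technical point is handling the $n$ even case at the final step, where the center is a set of points and $\Mzeo$ is only a partial desingularization: one must confirm that the relevant classes descend to $\QQ$-Cartier ample classes on the mildly singular quotient, which is where Kirwan's construction and the symmetry of the linearization are used.
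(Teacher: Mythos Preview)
Your overall framework is correct and coincides with the paper's: both reduce Theorem~\ref{thm1.2} to the assertion that $K_{\Mzek}+\alpha D_k$ is ample on $\Mzek$ for $\frac{2}{m-k+2}<\alpha\le\frac{2}{m-k+1}$ (this reduction is exactly Simpson's \cite[Corollary~3.5]{Simpson}, which the paper invokes verbatim). Where your proposal falls short is in the plan for establishing that ampleness. You write that it ``means computing the discrepancies of each blow-up in \eqref{eq1} against the boundary and canonical classes and checking the resulting linear inequalities in $\alpha$,'' but discrepancy bookkeeping only tells you the \emph{coefficients} of $K_{\Mzek}+\alpha D_k$ in the basis of boundary divisors; it does not by itself yield ampleness, which is a statement about intersections with all curves (or about lying in the interior of the nef cone).

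The paper's argument for ampleness has two genuinely distinct ingredients, and you have only supplied one of them. The part you do have is the inductive step coming from Theorem~\ref{thm1.1}: since $\varphi_k:\Mzek\to\Mzeko$ is a blow-up with exceptional divisor $D_k^{m-k+1}$, the class $\varphi_k^*(K_{\Mzeko}+\alpha D_{k-1})-\delta D_k^{m-k+1}$ is ample for small $\delta>0$, and a direct computation identifies this with $K_{\Mzek}+\alpha' D_k$ for $\alpha'$ slightly above $\frac{2}{m-k+2}$. This gives ampleness \emph{near the lower endpoint} of each interval. But to cover the rest of the interval up to $\frac{2}{m-k+1}$ the paper must import a separate nefness statement: using Alexeev--Swinarski's symmetrized GIT line bundles $V(x,n)$ and their intersection matrix against vital curves $C_i$, one shows $A(k,\alpha)=\varphi^*(K_{\Mzek}+\alpha D_k)$ is nef for $\alpha$ up to $\frac{2}{m-k+1}+s$ for some $s>0$. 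Only then does convexity (ample plus nef is ample) close the gap. Without this second input---which is external to the blow-up picture and involves nontrivial combinatorics---your proposed coefficient check will not produce the ampleness you need across the full chamber.
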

There are already two different \emph{unconditional} proofs of
Theorem \ref{thm1.2} by Alexeev-Swinarski \cite{AlexSwin} and by
Fedorchuk-Smyth \cite{FedoSmyt}. See Remark \ref{rem-compproofs}
for a brief outline of the two proofs. In this paper we obtain the
ampleness of some crucial divisors directly from Theorem
\ref{thm1.1}. As another application, we give an explicit basis of
the Picard group of $\Mzek$ for each $k$.

\bigskip

It is often the case in moduli theory that adding an extra structure
makes a problem easier. Let $0\le k< n$. A pointed nodal curve
$(C,p_1,\cdots, p_n)$ of genus $0$ together with a morphism $f:C\to
\PP^1$ of degree $1$ is called \emph{$k$-stable} if
\begin{enumerate}
\item[i.] all marked points $p_i$ are smooth points of $C$;
\item[ii.] no more than $n-k$ of the marked points $p_i$ can coincide;
\item[iii.] any ending irreducible component $C'$ of $C$ which is
contracted by $f$ contains more than $n-k$ marked points;
\item[iv.] the group of automorphisms of $C$ preserving $f$ and $p_i$
is finite.
\end{enumerate}
A. Mustata and M. Mustata prove the following in \cite{Mustata}.
\begin{theorem} \cite[\S1]{Mustata}
There is a fine moduli space $F_k$ of $k$-stable
pointed parameterized curves $(C,p_1,\cdots,p_n,f)$.
Furthermore, the moduli spaces $F_k$ fit into a
sequence of blow-ups
\begin{equation}\label{2010eq1}
\xymatrix{ \PP^1[n]\ar@{=}[r] & F_{n-2}\ar[r]^{\psi_{n-2}} &
F_{n-3}\ar[r]^{\psi_{n-3}} & \cdots \ar[r]^{\psi_2} &
F_1\ar[r]^{\psi_1} & F_0\ar@{=}[r] & (\PP^1)^n }
\end{equation}
whose centers are transversal unions of smooth subvarieties.
\end{theorem}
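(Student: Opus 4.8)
The plan is to construct the spaces $F_k$ by induction on $k$, producing the blow-down $\psi_k\colon F_k\to F_{k-1}$ together with a universal family over $F_k$ at each stage, in the spirit of the Fulton--MacPherson construction of configuration spaces. The base case is essentially tautological: since $f$ has degree $1$, condition (iii) for $k=0$ forbids any $f$-contracted component, so the only $0$-stable object is, up to unique isomorphism, $(\PP^1,p_1,\dots,p_n,\mathrm{id})$, determined by the $n$ images of the marked points. Accordingly I would set $F_0=(\PP^1)^n$ with universal curve $\PP^1\times(\PP^1)^n\to(\PP^1)^n$, universal map the first projection, and universal sections the graphs of the coordinate projections, and check at once that this represents the functor of $0$-stable families.

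For the inductive step, suppose $F_{k-1}$ has been built as a smooth fine moduli space carrying a universal family $(\mathcal{C}_{k-1}\to F_{k-1},\sigma_1,\dots,\sigma_n,f)$, together with the explicit affine charts produced by the previous blow-up. A $(k-1)$-stable curve fails to be $k$-stable precisely when some point of $C$ carries $n-k+1$ marked points, the maximum permitted at level $k-1$ by condition (ii); the inverse of $\psi_k$ will resolve such a curve by bubbling those $n-k+1$ points off onto a new $f$-contracted leaf component, which carries exactly $n-k+1>n-k$ markings as required at level $k$. For each $S\subseteq\{1,\dots,n\}$ with $|S|=n-k+1$ let $Z_S\subset F_{k-1}$ be the closed locus where the sections $\sigma_i$, $i\in S$, all pass through one common point. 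Working in the inductive charts, I would show: each $Z_S$ is smooth of codimension $n-k$; $Z_S\cap Z_{S'}=\emptyset$ whenever $S\neq S'$ and $S\cap S'\neq\emptyset$, since otherwise $n-k+2$ marked points would coincide, contradicting $(k-1)$-stability; and $Z_S$ meets $Z_{S'}$ transversally when $S\cap S'=\emptyset$. Hence $Z_{k-1}:=\bigcup_{|S|=n-k+1}Z_S$ is a transversal union of smooth subvarieties of equal dimension, and I would define $F_k:=\mathrm{Bl}_{Z_{k-1}}F_{k-1}$; by the fact recalled in \S\ref{sec2.3} this is the iterated smooth blow-up along the components $Z_S$ in any order, hence smooth, with $\psi_k$ the blow-down.

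Next I would produce the universal family on $F_k$. Away from the exceptional locus, $F_k\setminus\mathrm{Exc}(\psi_k)\cong F_{k-1}\setminus Z_{k-1}$ and the family is the pullback of $\mathcal{C}_{k-1}$; over the exceptional divisor $E_S$ the pulled-back sections indexed by $S$ coincide along a divisor, and blowing up $\psi_k^{*}\mathcal{C}_{k-1}$ along the closure of that locus sprouts a $\PP^1$-bubble carrying the marked points of $S$. Patching these modifications gives a flat family $\mathcal{C}_k\to F_k$ with $n$ disjoint sections and a degree-$1$ map to $\PP^1$, whose fibers a direct check shows to satisfy (i)--(iv), i.e.\ to be $k$-stable. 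For the universal property, given a family of $k$-stable curves over a scheme $T$, I would contract every leaf component carrying exactly $n-k+1$ marked points; the result is a family of $(k-1)$-stable curves, hence a morphism $g\colon T\to F_{k-1}$, and along the locus where $g$ meets $Z_{k-1}$ the original family furnishes exactly a line subbundle resolving the ideal sheaf of $Z_{k-1}$, so $g$ lifts uniquely through the blow-up to $\widetilde g\colon T\to F_k$ with $\widetilde g^{*}\mathcal{C}_k$ the given family. This makes $F_k$ a fine moduli space and closes the induction. The identification $F_{n-2}\cong\PP^1[n]$ then follows by matching the blow-up sequence above with the one defining $\PP^1[n]$, or equivalently by observing that the two spaces represent the same functor.

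The step I expect to be the main obstacle is the inductive local analysis: one has to carry along affine charts on $F_{k-1}$ explicit enough to establish simultaneously the smoothness and codimension of each $Z_S$ and the precise disjointness/transversality pattern among the $Z_S$, and---tightly bound up with this---to verify that ``contract the minimal leaves'' is genuinely inverse, at the level of families over an arbitrary base, to the blow-up $\psi_k$ (flatness of the contracted family, and the matching of its resolving data with the blow-up ideal). This local and combinatorial bookkeeping is the technical heart of any Fulton--MacPherson-type argument; the remaining verifications are comparatively formal.
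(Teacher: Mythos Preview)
Your inductive blow-up construction is exactly the one the paper recalls in \S\ref{sec3}: the paper does not itself prove this theorem but cites \cite{Mustata}, sketching the stages $F_0\to F_1\to\cdots$ with blow-up centers $\Sigma_{k-1}^{n-k+1}=\bigcup_{|S|=n-k+1}\Sigma_{k-1}^S$ and the universal families $\Gamma_k\to F_k$ precisely as you propose; the moduli interpretation is then quoted as \cite[Proposition~1.8]{Mustata} (Proposition~\ref{prop3.2} here).

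The one substantive methodological difference is in the transversality step you flag as the ``main obstacle.'' You plan to carry explicit affine charts through the induction and verify directly the smoothness, codimension, and transversality of the $Z_S$. The paper instead short-circuits this entirely: it observes (Lemma~\ref{lem3-1}) that the collection of all diagonals $B_0=\{\Sigma_0^S\}$ is a \emph{building set} in the sense of L.~Li \cite{Li}, and then \cite[Proposition~2.8]{Li} guarantees that at every stage the dominant transforms $\{\Sigma_k^S\}$ again form a building set of an arrangement, which is exactly the transversality you need. This packages all the inductive chart bookkeeping into a single citation, and as a bonus identifies $F_{n-2}$ with the wonderful compactification $\PP^1[n]$ for free via Theorem~\ref{thm-Li1}. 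Your direct approach would work, but Li's framework is designed precisely to absorb this kind of iterated transversality check, and the paper later relies on the stronger output of Lemma~\ref{lem3-1} (transversality among \emph{all} $\Sigma_k^S$ with $|S|\ge n-k$, not just those of fixed cardinality) when analyzing the GIT quotients in \S\ref{sec4}.
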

The first term $\PP^1[n]$ is the Fulton-MacPherson
compactification of the configuration space of $n$ points in
$\PP^1$ constructed in \cite{FM}. The blow-up centers are
transversal unions of smooth subvarieties and hence we can further
decompose each arrow into the composition of smooth blow-ups along
the irreducible components in any order. This blow-up sequence is
actually a special case of L. Li's inductive construction of
a \emph{wonderful compactification} of the configuration space and
transversality of various subvarieties is a corollary of Li's
result \cite[Proposition 2.8]{Li}. (See \S\ref{sec2.3}.) The
images of the blow-up centers are invariant under the diagonal
action of $SL(2)$ on $(\PP^1)^n$ and so this action lifts to $F_k$
for all $k$. The aim of this paper is to show that the GIT
quotient of the sequence \eqref{2010eq1} by $SL(2)$ gives us
\eqref{eq1}.

To make sense of GIT quotients, we need to specify a linearization
of the action of $G=SL(2)$ on $F_k$. For $F_0=(\PP^1)^n$, we
choose the symmetric linearization $L_0=\cO(1,\cdots, 1)$.
Inductively, we choose $L_k=\psi^*_kL_{k-1}\otimes
\cO(-\delta_kE_{k})$ where $E_k$ is the exceptional divisor of
$\psi_k$ and $0<\delta_k<\!<\delta_{k-1}<\!< \cdots
<\!<\delta_1<\!<1$. Let $F_k^{ss}$ (resp. $F_k^s$) be the
semistable (resp. stable) part of $F_k$ with respect to $L_k$.
Then by \cite[\S3]{Kirwan} or \cite[Theorem 3.11]{Hu}, we have
\begin{equation}\label{2010eq2}
\psi_k^{-1}(F^s_{k-1})\subset F_k^s\subset F_k^{ss}\subset
\psi_k^{-1}(F_{k-1}^{ss}).
\end{equation}
In particular, we obtain a sequence of morphisms
$$\bar\psi_k:F_k\git G\to F_{k-1}\git G.$$ It is well known that a
point $(x_1,\cdots,x_n)$ in $F_0=(\PP^1)^n$ is stable (resp.
semistable) if $\ge \lfloor \frac{n}2\rfloor$ points (resp. $>
\lfloor \frac{n}2\rfloor$ points) do not coincide (\cite{MFK,K1}).

Let us first consider the case where $n$ is odd. In this case,
$F_0^s=F_0^{ss}$ because $\frac{n}2$ is not an integer. Hence
$F_k^s=F_k^{ss}$ for any $k$ by \eqref{2010eq2}. Since the blow-up
centers of $\psi_k$ for $k\le m+1$ lie in the unstable part, we
have $F_k^s=F_0^s$ for $k\le m+1$. Furthermore, the stabilizer
group of every point in $F_k^s$ is $\{\pm 1\}$, i.e. $\bar
G=PGL(2)$ acts freely on $F_k^s$ for $0\le k\le n-2$ and thus
$F_k\git G=F_k^s/G$ is nonsingular. By the stability conditions,
forgetting the degree 1 morphism $f:C\to \PP^1$ gives us an
invariant morphism $F_{n-m+k}^s\to \Mzek$ which induces a morphism
$$\phi_k:F_{n-m+k}\git G\to \Mzek \quad \text{for } k=0,\cdots, m-2.$$
Since both varieties are nonsingular, we can conclude that
$\phi_k$ is an isomorphism by showing that the Picard numbers are
identical. Since $\bar G$ acts freely on $F_{n-m+k}^s$, the
quotient of the blow-up center of $\psi_{n-m+k+1}$ is again a
transversal union of $\binom{n}{m-k}$ smooth varieties
$\Sigma^S_{n-m+k}\git G$ for a subset $S$ of $\{1,\cdots, n\}$
with $|S|=m-k$, which are isomorphic to the moduli space
$\overline{M}_{0,(1,\epsilon_k,\cdots,\epsilon_k)}$ of weighted
pointed stable curves with $n-m+k+1$ marked points (Remark
\ref{2010rem1}). Finally we conclude that
$$\varphi_k:\Mzek\cong F_{n-m+k}\git G\mapright{\bar\psi_{n-m+k}}
F_{n-m+k-1}\git G\cong \Mzeko$$ is a blow-up by using a lemma in
\cite{Kirwan} which tells us that quotient and blow-up commute.
(See \S\ref{sec2.2}.) It is straightforward to check that this
morphism $\varphi_k$ is identical to Hassett's natural morphisms
(\S\ref{sec2.1}). Note that the isomorphism
$$\phi_{m-2}:\PP^1[n]\git G\mapright{\cong} \Mzn$$ was obtained
by Hu and Keel (\cite{HuKeel}) when $n$ is odd because $L_0$ is a
\emph{typical} linearization in the sense that $F_0^{ss}=F_0^s$.
The above proof of the fact that $\phi_k$ is an isomorphism in the
odd $n$ case is essentially the same as Hu-Keel's. However their
method does not apply to the even degree case.

\def\Fk{F_{n-m+k} }
\def\tFk{\tilde{F}_{n-m+k} }
\def\Yk{Y_{n-m+k} }

The case where $n$ is even is more complicated because
$F_k^{ss}\ne F_k^s$ for all $k$. Indeed, $F_m\git G=\cdots
=F_0\git G= (\PP^1)^n\git G$ is singular with exactly
$\frac12\binom{n}{m}$ singular points. But for $k\ge 1$, the GIT
quotient of $F_{n-m+k}$ by $G$ is nonsingular and we can use
Kirwan's partial desingularization of the GIT quotient
$F_{n-m+k}\git G$ (\cite{Kirwan}). For $k\ge 1$, the locus
$Y_{n-m+k}$ of closed orbits in $F_{n-m+k}^{ss}-F_{n-m+k}^s$ is
the disjoint union of the transversal intersections of smooth
divisors $\Sigma^S_{n-m+k}$ and $\Sigma^{S^c}_{n-m+k}$ where
$S\sqcup S^c=\{1,\cdots,n\}$ is a partition with $|S|=m$. In
particular, $Y_{n-m+k}$ is of codimension $2$ and the stabilizers
of points in $Y_{n-m+k}$ are all conjugates of $\CC^*$. The
weights of the action of the stabilizer $\CC^*$ on the normal
space to $Y_{n-m+k}$ are $2,-2$. By Luna's slice theorem
(\cite[Appendix 1.D]{MFK}), it follows that $F_{n-m+k}\git G$ is
smooth along the divisor $Y_{n-m+k}\git G$. If we let $\tFk\to
\Fk^{ss}$ be the blow-up of $\Fk^{ss}$ along $\Yk$,
$\tFk^{ss}=\tFk^s$ and $\tFk\git G=\tFk^s/G$ is nonsingular. Since
blow-up and quotient commute (\S\ref{sec2.2}), the induced map
$$\tFk\git G\to \Fk\git G$$ is a blow-up along $\Yk\git G$ which has
to be an isomorphism because the blow-up center is already a smooth
divisor. So we can use $\tFk^s$ instead of $\Fk^{ss}$ and apply the
same line of arguments as in the odd degree case. In this way, we
can establish Theorem \ref{thm1.1}.

To deduce Theorem \ref{thm1.2} from Theorem \ref{thm1.1}, we note
that by \cite[Corollary 3.5]{Simpson}, it suffices to prove that
$K_{\Mzek}+\alpha D_k$ is ample for $\frac{2}{m-k+2}<\alpha\le
\frac{2}{m-k+1}$ where $D_k=\Mzek-M_{0,n}$ is the boundary divisor
of $\Mzek$ (Proposition \ref{prop-amplerange}). By the
intersection number calculations of Alexeev and Swinarski
(\cite[\S3]{AlexSwin}), we obtain the nefness of $K_{\Mzek}+\alpha
D_k$ for $\alpha= \frac{2}{m-k+1}+s$ for some (sufficiently small)
positive number $s$. Because any positive linear combination of an
ample divisor and a nef divisor is ample, it suffices to show that
$K_{\Mzek}+\alpha D_k$ is ample for $\alpha=\frac{2}{m-k+2}+t$ for
\emph{any} sufficiently small $t>0$. We use induction on $k$. By
calculating the canonical divisor explicitly, it is easy to show
when $k=0$. Because $\varphi_k$ is a blow-up with exceptional
divisor $D^{m-k+1}_k$, $\varphi_k^*(K_{\Mzeko}+\alpha
D_{k-1})-\delta D^{m-k+1}_k$ is ample for small $\delta>0$ if
$K_{\Mzeko}+\alpha D_{k-1}$ is ample. By a direct calculation, we
find that these ample divisors give us $K_{\Mzek}+\alpha D_k$ with
$\alpha=\frac{2}{m-k+2}+t$ for any sufficiently small $t>0$. So we
obtain a proof of Theorem \ref{thm1.2}.

\bigskip

For the moduli spaces of \emph{unordered} weighted pointed stable
curves \[\widetilde{M}_{0,n\cdot\epsilon_k}=\Mzek/S_n\] we can
simply take the $S_n$ quotient of our sequence \eqref{eq1} and
thus $\widetilde{M}_{0,n\cdot\epsilon_k}$ can be constructed by a
sequence of \emph{weighted blow-ups} from $\PP^n\git
G=\left((\PP^1)^n\git G\right)/S_n$. In particular,
$\widetilde{M}_{0,n\cdot\epsilon_1}$ is a weighted blow-up of
$\PP^n\git G$ at its singular point when $n$ is even.

\bigskip

Here is an outline of this paper. In \S2, we recall necessary
materials about the moduli spaces $\Mzek$ of weighted pointed stable
curves, partial desingularization and blow-up along transversal
center. In \S3, we recall the blow-up construction of the moduli
space $F_k$ of weighted pointed parameterized stable curves. In \S4,
we prove Theorem \ref{thm1.1}. In \S5, we prove Theorem
\ref{thm1.2}. In \S6, we give a basis of the Picard group of $\Mzek$
as an application of Theorem \ref{thm1.1}.

\bigskip

\textbf{Acknowledgement.} This paper grew out of our effort to
prove a conjecture of Brendan Hassett (passed to us by David
Donghoon Hyeon): When $n$ is even,
$\widetilde{M}_{0,n\cdot\epsilon_1}$ is the (weighted) blow-up of
$\PP^n\git G$ at the singular point. It is our pleasure to thank
Donghoon Hyeon for useful discussions. We are also grateful to
David Smyth who kindly pointed out an error in a previous draft.

\section{Preliminaries}\label{sec2}

\subsection{Moduli of weighted pointed stable curves}\label{sec2.1}
We recall the definitions and basic facts on Hassett's moduli
spaces of weighted pointed stable curves from \cite{Hassett}.

A family of nodal curves of genus $g$ with $n$ marked points over
base scheme $B$ consists of
\begin{enumerate}
\item a flat proper
morphism $\pi : C \to B$ whose geometric fibers are nodal
connected curves of arithmetic genus $g$ and
\item sections $s_1, s_2, \cdots, s_n$ of $\pi$.
\end{enumerate}
An $n$-tuple
$\mathcal{A} =(a_1, a_2, \cdots, a_n) \in \mathbb{Q}^n$ with $0 <
a_i \le 1$ assigns a weight $a_i$ to the $i$-th marked point.
Suppose that $2g-2+a_1+a_2+\cdots+a_n > 0$.

\begin{definition}\cite[\S 2]{Hassett}
A family of nodal curves of genus $g$ with $n$ marked points $(C,
s_1, \cdots, s_n) \stackrel{\pi}{\to} B$ is stable of type $(g,
\mathcal{A})$ if
\begin{enumerate}
\item the sections $s_1, \cdots, s_n$ lie in the smooth locus of $\pi$;
\item for any subset
$\{s_{i_1}, \cdots, s_{i_r}\}$ of nonempty intersection, $a_{i_1}
+ \cdots + a_{i_r} \le 1$;
\item  $K_{\pi} + a_1s_1 + a_2s_2+\cdots+a_ns_n$ is $\pi$-relatively ample.
\end{enumerate}
\end{definition}

\begin{theorem}\cite[Theorem 2.1]{Hassett}
There exists a connected Deligne-Mumford stack
$\overline{\mathcal{M}}_{g, \mathcal{A}}$, smooth and proper over
$\ZZ$, representing the moduli functor of weighted pointed stable
curves of type $(g, \mathcal{A})$. The corresponding coarse moduli
scheme $\overline{M}_{g, \mathcal{A}}$ is projective over $\ZZ$.
\end{theorem}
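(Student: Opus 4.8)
The plan is to realize $\overline{\mathcal{M}}_{g,\mathcal{A}}$ as an open substack of the classical algebraic stack $\mathcal{X}_{g,n}$ of all $n$-pointed nodal curves of arithmetic genus $g$, and then to verify in turn the Deligne--Mumford property, smoothness over $\ZZ$, boundedness, properness, and projectivity of the coarse space. The stack $\mathcal{X}_{g,n}$, whose objects over $B$ are flat proper families $\pi\colon C\to B$ with connected nodal geometric fibres of genus $g$ together with $n$ sections, is a smooth algebraic stack, locally of finite type over $\ZZ$: a nodal curve is a local complete intersection and the deformations of $(C,s_1,\dots,s_n)$ are controlled by $\mathrm{Ext}^{\bullet}_C(\Omega_C(\sum_i s_i),\cO_C)$, whose degree-$2$ part vanishes on a curve (this also gives smoothness). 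The three conditions in the definition cut out an \emph{open} substack: ``the sections avoid the non-smooth locus of $\pi$'' is open since that locus is closed and $\pi$ proper; ``no collection of sections whose weights sum to more than $1$ has a common point'' is open since the locus where a fixed finite set of sections all meet is closed; and relative ampleness of $K_\pi+\sum_i a_is_i$ is an open condition on $B$. Hence $\overline{\mathcal{M}}_{g,\mathcal{A}}$ is a smooth algebraic stack over $\ZZ$.

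Next I would check the Deligne--Mumford property: for a weighted stable curve $(C,p_1,\dots,p_n)$ over an algebraically closed field, $\mathrm{Aut}(C,p_1,\dots,p_n)$ is finite and reduced. An infinitesimal automorphism is a global vector field on $C$ vanishing along the $p_i$ and at the nodes; restricting to any irreducible component and using $T_{\PP^1}\cong\cO(2)$ one sees that a nonzero such field forces a component $C'\cong\PP^1$ with $\deg\big((K_C+\sum_i a_ip_i)|_{C'}\big)\le 0$, contradicting condition (iii). So the automorphism group scheme has trivial Lie algebra, hence is \'etale, and $\overline{\mathcal{M}}_{g,\mathcal{A}}$ is Deligne--Mumford. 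Boundedness --- and thus finite type over $\ZZ$ --- follows by clearing denominators of the $a_i$: a fixed positive multiple of $K_C+\sum_i a_ip_i$ is a very ample integral divisor of degree depending only on $(g,\mathcal{A})$, embedding every such curve into a fixed projective space with one of finitely many Hilbert polynomials.

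For properness I would use the valuative criterion over a discrete valuation ring $R$ with fraction field $K$. Given $(\mathcal{C}_K,s_i)$ of type $(g,\mathcal{A})$, first apply semistable reduction (after a finite extension of $K$) to obtain a nodal family over $R$ with regular total space, extend the $s_i$ by properness, and blow up further so that the sections meet the relative smooth locus; then pass to the relative log canonical model $\mathcal{C}=\mathrm{Proj}_R\bigoplus_{m\ge0}\pi_*\cO(\lfloor m(K_\pi+\sum_i a_is_i)\rfloor)$. A local analysis of which subcurves of the central fibre get contracted shows that $\mathcal{C}\to\mathrm{Spec}\,R$ is again a weighted stable curve of type $(g,\mathcal{A})$. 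Uniqueness of the extension --- hence separatedness --- holds because any two extensions are relative log canonical models of the same generic family, so are canonically isomorphic. This gives properness of $\overline{\mathcal{M}}_{g,\mathcal{A}}$ over $\ZZ$.

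Finally, the coarse moduli scheme $\overline{M}_{g,\mathcal{A}}$ is proper, and to see it is projective it suffices to exhibit an ample line bundle. Lowering all weights to those of $\mathcal{A}$ gives a reduction morphism $\rho\colon\overline{M}_{g,n}=\overline{M}_{g,(1,\dots,1)}\to\overline{M}_{g,\mathcal{A}}$, which is surjective and projective; one then checks that a suitable combination $\lambda_{\mathcal{A}}$ of the Hodge class and the $\psi$-classes pulls back under $\rho$ to a semiample bundle whose associated morphism is exactly $\rho$, so that $\overline{M}_{g,\mathcal{A}}\cong\mathrm{Proj}$ of the corresponding section ring and $\lambda_{\mathcal{A}}$ is ample; alternatively one applies Koll\'ar's ampleness criterion to the relatively ample polarized family $(\mathcal{C},K_\pi+\sum_i a_is_i)$ over the moduli stack. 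The main obstacle is the existence half of the valuative criterion --- \emph{weighted stable reduction} --- namely showing that the relative log canonical model above is genuinely a family of weighted stable curves: that precisely the subcurves violating condition (iii) get contracted, that only nodal singularities appear in the limit, and that sections which collide land where the total weight is $\le 1$. Identifying the polarization $\lambda_{\mathcal{A}}$ and confirming that it induces exactly the contraction $\rho$ is the secondary difficulty.
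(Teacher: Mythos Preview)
The paper does not give its own proof of this statement: it is quoted verbatim from Hassett as \cite[Theorem 2.1]{Hassett} in the preliminaries (\S\ref{sec2.1}) and is used only as background. So there is no ``paper's own proof'' to compare against; what you have written is a sketch of Hassett's original argument, not of anything in this paper.

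That said, your outline is essentially the strategy Hassett follows. A few comments on the sketch itself. The openness of the three stability conditions, the deformation-theoretic smoothness, and the finiteness of automorphisms are argued correctly; in the last of these you implicitly use condition~(ii) (coinciding sections carry total weight $\le 1$) to bound $\deg\big((K_C+\sum a_ip_i)|_{C'}\big)$ on a component supporting a nonzero vector field, and it would be worth making that explicit. You correctly flag weighted stable reduction as the crux, and indeed Hassett handles it by passing to the relative log canonical model of a semistable extension, exactly as you propose. For projectivity, your first suggestion (produce $\lambda_{\mathcal A}$ so that $\rho$ is the associated contraction) presupposes the reduction morphism $\rho$, which in turn presupposes the target moduli space already exists --- so it is cleanest to go with your alternative, Koll\'ar's ampleness lemma applied to the universal polarized family, which is what Hassett does. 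Finally, you do not address connectedness; this is immediate once the reduction morphism $\overline{M}_{g,n}\to\overline{M}_{g,\mathcal A}$ is in hand, since the source is irreducible.
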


When $g = 0$, there is no nontrivial automorphism for any weighted
pointed stable curve and hence $\overline{M}_{0, \mathcal{A}}$ is
a projective \emph{smooth variety} for any $\cA$.

There are natural morphisms between moduli spaces with different
weight data. Let $\mathcal{A}=\{a_1, \cdots, a_n\}$,
$\mathcal{B}=\{b_1, \cdots, b_n\}$ be two weight data and suppose
$a_i \ge b_i$ for all $1 \le i \le n$. Then there exists a
birational \emph{reduction} morphism
\[
    \varphi_{\mathcal{A}, \mathcal{B}}
    : \overline{\mathcal{M}}_{g, \mathcal{A}} \to
    \overline{\mathcal{M}}_{g, \mathcal{B}}.
\]
For $(C, s_1, \cdots, s_n) \in \overline{\mathcal{M}}_{g,
\mathcal{A}}$, $\varphi_{\mathcal{A}, \mathcal{B}}(C, s_1, \cdots,
s_n)$ is obtained by collapsing components of $C$ on which
$\omega_C + b_1s_1+ \cdots + b_ns_n$ fails to be ample. These
morphisms between moduli stacks induce corresponding morphisms
between coarse moduli schemes.

The exceptional locus of the reduction morphism
$\varphi_{\mathcal{A}, \mathcal{B}}$ consists of boundary divisors
$D_{I, I^c}$ where $I = \{i_1, \cdots, i_r\}$ and $I^c=\{j_1,
\cdots, j_{n-r}\}$ form a partition of $\{1, \cdots, n\}$
satisfying $r > 2$, $$a_{i_1} + \cdots + a_{i_r} > 1 \quad
\text{and}\quad b_{i_1} + \cdots + b_{i_r} \le 1.$$ Here $D_{I,
I^c}$ denotes the closure of the locus of $(C, s_1, \cdots, s_n)$
where $C$ has two irreducible components $C_1, C_2$ with $p_a(C_1)
= 0$, $p_a(C_2) = g$, $r$ sections $s_{i_1}, \cdots s_{i_r}$ lying
on $C_1$, and the other $n-r$ sections lying on $C_2$.

\begin{proposition}\cite[Proposition 4.5]{Hassett}\label{reduction}
The boundary divisor $D_{I,I^c}$ is isomorphic to $
\overline{M}_{0, \mathcal{A}'_I} \times \overline{M}_{g,
\mathcal{A}'_{I^c}},$ with $ \mathcal{A}'_I = (a_{i_1}, \cdots,
a_{i_r}, 1) $ and $\mathcal{A}'_{I^c}= (a_{j_1}, \cdots,
a_{j_{n-r}}, 1).$ Furthermore, $
    \varphi_{\mathcal{A}, \mathcal{B}}(D_{I, I^c})
    \cong \overline{M}_{g, \mathcal{B}'_{I^c}}$ with $ \mathcal{B}'_{I^c} = (b_{j_1}, \cdots, b_{j_{n-r}},
\sum_{k=1}^r    b_{i_k}).$
\end{proposition}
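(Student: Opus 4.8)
The plan is to realize $D_{I,I^c}$ as the image of an explicit gluing (clutching) morphism, to show that this morphism is an isomorphism onto $D_{I,I^c}$, and then to trace the reduction morphism $\varphi_{\mathcal{A},\mathcal{B}}$ through the resulting identification. First I would construct a morphism
\[
g_{I,I^c}\colon \overline{M}_{0,\mathcal{A}'_I}\times\overline{M}_{g,\mathcal{A}'_{I^c}}\longrightarrow\overline{M}_{g,\mathcal{A}}
\]
by a functorial recipe: over a base $B$, given a family $(C_1;s_{i_1},\ldots,s_{i_r},t)$ of type $(0,\mathcal{A}'_I)$ and a family $(C_2;s_{j_1},\ldots,s_{j_{n-r}},t')$ of type $(g,\mathcal{A}'_{I^c})$, form $C=C_1\sqcup_{t\sim t'}C_2$ by identifying the last (weight-$1$) sections and equip $C$ with the $n$ sections $s_1,\ldots,s_n$. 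This $C\to B$ is a flat family of connected nodal curves of arithmetic genus $g$, and I would check fibrewise that it is stable of type $(g,\mathcal{A})$: condition (1) holds because $t,t'$ were marked, hence smooth, points; condition (2) holds because any subset of the $s_\bullet$ with nonempty common intersection lies entirely on $C_1$ or on $C_2$, so the bound on its weight sum is inherited from one of the two factors; and condition (3) holds because $\omega_C+\sum_\ell a_\ell s_\ell$ restricts to $C_1$ as $\omega_{C_1}\bigl(\sum_k a_{i_k}s_{i_k}+t\bigr)$ and to $C_2$ as the analogous sheaf, each relatively ample by stability of the respective factor. The weight datum $\mathcal{A}'_I$ is admissible, $2\cdot0-2+(a_{i_1}+\cdots+a_{i_r})+1>0$, precisely because $D_{I,I^c}$ lies in the exceptional locus of $\varphi_{\mathcal{A},\mathcal{B}}$, where $a_{i_1}+\cdots+a_{i_r}>1$.

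Next I would show $g_{I,I^c}$ is an isomorphism onto $D_{I,I^c}$. Its source is proper, so the image is closed; over the open locus where $C_1$ and $C_2$ are smooth, the map surjects onto the dense set of curves having exactly two components of genera $0$ and $g$ with the prescribed partition of marked points, so the image is exactly $D_{I,I^c}$. The map is set-theoretically injective, since a curve in $D_{I,I^c}$ has a distinguished node separating $\{s_{i_k}\}$ from $\{s_{j_l}\}$, and cutting there recovers the ordered pair of pointed curves; thus $g_{I,I^c}$ is a monomorphism. Finally it is unramified: the first-order deformations of a glued curve $C_1\cup_\nu C_2$ as a weighted pointed curve split as the locally trivial deformations — which are exactly the deformations of the pair $(C_1,\ldots)\times(C_2,\ldots)$, onto which the differential of $g_{I,I^c}$ is an isomorphism — together with the one-dimensional smoothing of the node $\nu$, which is transverse to $D_{I,I^c}$. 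A proper unramified monomorphism is a closed immersion, and since the source is smooth with $\dim(\overline{M}_{0,\mathcal{A}'_I}\times\overline{M}_{g,\mathcal{A}'_{I^c}})=(r-2)+(3g-2+n-r)=3g-4+n=\dim D_{I,I^c}$, the map $g_{I,I^c}$ is an isomorphism onto the smooth divisor $D_{I,I^c}$. Equivalently, one writes the inverse down directly: for a family in $D_{I,I^c}$ the separating nodes sweep out a section $\nu$, and normalizing the total space along $\nu$ splits the family into its two halves, each carrying $\nu$ as a weight-$1$ section, which defines the inverse morphism.

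For the second assertion I would follow $\varphi_{\mathcal{A},\mathcal{B}}$ through the identification just obtained. Take $C=C_1\cup_\nu C_2\in D_{I,I^c}$ with both components smooth. On $C_1$ the sheaf $\omega_C+\sum_\ell b_\ell s_\ell$ restricts to $\omega_{C_1}\bigl(\sum_k b_{i_k}s_{i_k}+\nu\bigr)$, of degree $-2+(\sum_k b_{i_k})+1\le0$ because $\sum_k b_{i_k}\le1$; hence $\varphi_{\mathcal{A},\mathcal{B}}$ contracts $C_1$ to its attaching point on $C_2$, while $C_2$ survives, and the weights $b_{i_1},\ldots,b_{i_r}$ accumulate at that point to total weight $\sum_k b_{i_k}$. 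Therefore $\varphi_{\mathcal{A},\mathcal{B}}|_{D_{I,I^c}}$ is the composite of the projection $\overline{M}_{0,\mathcal{A}'_I}\times\overline{M}_{g,\mathcal{A}'_{I^c}}\to\overline{M}_{g,\mathcal{A}'_{I^c}}$ with the reduction morphism lowering the last weight of $\mathcal{A}'_{I^c}$ from $1$ to $\sum_k b_{i_k}$ (and each $a_{j_l}$ to $b_{j_l}$); in particular its image is all of $\overline{M}_{g,\mathcal{B}'_{I^c}}$.

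I expect the main obstacle to be the family-theoretic content of the middle step: verifying over an arbitrary, possibly non-reduced base that the separating node is a genuine relative Cartier divisor cut out by a section, and that the two halves obtained by normalizing along it form flat families of weighted pointed stable curves of the advertised types. This is exactly where the smoothness of Hassett's moduli functor and the étale-local model $xy=t$ of a nodal degeneration are needed, and it is also what underlies the first-order computation above that identifies the normal direction to $D_{I,I^c}$ with the node-smoothing parameter.
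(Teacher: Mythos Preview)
The paper does not prove this proposition; it is quoted directly from Hassett \cite[Proposition 4.5]{Hassett} and stated without argument, so there is no proof in the paper to compare against. Your clutching-morphism approach is the standard one and is essentially what Hassett does in the cited reference, so your proposal is correct and aligned with the original source.
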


From now on, we focus on the $g=0$ case. Let $$m = \lfloor
\frac{n}{2}\rfloor,\quad \frac{1}{m-k+1} < \epsilon_k \le
\frac{1}{m-k}\quad \text{and}\quad n\cdot \epsilon_k = (\epsilon_k,
\cdots, \epsilon_k).$$ Consider the reduction morphism
\[
    \varphi_{n\cdot \epsilon_{k}, n \cdot \epsilon_{k-1}}
    : \overline{M}_{0, n \cdot \epsilon_{k}} \to
    \overline{M}_{0, n \cdot \epsilon_{k-1}}.
\]
Then $D_{I, I^c}$ is contracted by $\varphi_{n\cdot \epsilon_{k},
n \cdot \epsilon_{k-1}}$ if and only if $|I| = m - k + 1$.
Certainly, there are ${n} \choose {m-k+1}$ such partitions
$I\sqcup I^c$ of $\{1,\cdots,n\}$.

For two subsets $I, J \subset \{1, \cdots, n\}$ such that $|I| =
|J| = m - k + 1$, $D_{I, I^c} \cap D_{J, J^c}$ has codimension at
least two in $\Mzek$. So if we denote the complement of the
intersections of the divisors by
\[
    \Mzek' = \Mzek - \bigcup_{|I| = |J| = n-k+1, I \ne J} D_{I, I^c} \cap D_{J,
    J^c},
\]
we have $\mathrm{Pic}(\Mzek') = \mathrm{Pic}(\Mzek)$. The
restriction of $\varphi_{n \cdot \epsilon_{k}, n \cdot
\epsilon_{k-1}}$ to $\Mzek'$ is a contraction of ${n} \choose
{m-k+1}$ \emph{disjoint} divisors and its image is an open subset
whose complement has codimension at least two. Therefore we obtain
the following equality of Picard numbers:
\begin{equation}\label{eq-eqPicNumber}
    \rho(\overline{M}_{0, n \cdot \epsilon_{k}}) =
    \rho(\overline{M}_{0, n \cdot \epsilon_{k-1}}) + {n \choose {m-k+1}}.
\end{equation}

It is well known that the Picard number of $\Mzn$ is
\begin{equation}\label{eq-eqPic2} \rho(\Mzn)=\rho(\overline{M}_{0, n \cdot
\epsilon_{m-2}})=2^{n-1}-\binom{n}{2}-1.
\end{equation}
Hence we obtain the following lemma from \eqref{eq-eqPicNumber}
and \eqref{eq-eqPic2}.
\begin{lemma}\label{lem-PicNumMze}\begin{enumerate}\item
If $n$ is odd, $\rho(\Mzek)= n+\sum_{i=1}^k\binom{n}{m-i+1}$.\item
If $n$ is even,
$\rho(\Mzek)=n+\frac12\binom{n}{m}+\sum_{i=2}^k\binom{n}{m-i+1}$.\end{enumerate}
\end{lemma}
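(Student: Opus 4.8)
The plan is to combine the recursion \eqref{eq-eqPicNumber} with the two boundary-case values of the Picard number: the known formula \eqref{eq-eqPic2} for $\rho(\Mzn)=\rho(\overline{M}_{0,n\cdot\epsilon_{m-2}})$ at the top of the chain, and an independent computation of $\rho\bigl((\PP^1)^n\git SL(2)\bigr)$ at the bottom. First I would telescope \eqref{eq-eqPicNumber}: summing over the reduction morphisms $\varphi_{n\cdot\epsilon_i,\,n\cdot\epsilon_{i-1}}$ gives, for any $k$,
\begin{equation*}
\rho(\Mzek)=\rho\bigl(\overline{M}_{0,n\cdot\epsilon_0}\bigr)+\sum_{i=1}^{k}\binom{n}{m-i+1},
\end{equation*}
where $\overline{M}_{0,n\cdot\epsilon_0}$ is to be interpreted as the GIT quotient $(\PP^1)^n\git SL(2)$ sitting at the end of \eqref{eq1}. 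So the whole lemma reduces to determining the base term $\rho\bigl((\PP^1)^n\git SL(2)\bigr)$, together with checking that the recursion \eqref{eq-eqPicNumber} remains valid for the first step $k=1$ (i.e. for $\varphi_{n\cdot\epsilon_1,\,n\cdot\epsilon_0}$), which is exactly the content of Theorem \ref{thm1.1}'s last arrow.

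For the base term I would split into the two parities, exactly as in the statement. When $n$ is odd, $(\PP^1)^n\git SL(2)=(\PP^1)^n/SL(2)$ is a free quotient of a smooth projective variety by a connected group, so $\rho\bigl((\PP^1)^n\git SL(2)\bigr)$ equals the rank of the $SL(2)$-invariant part of $\operatorname{Pic}\bigl((\PP^1)^n\bigr)\cong\ZZ^n$; since $SL(2)$ is connected it acts trivially on the Néron–Severi group, giving $\rho=n$. This yields $\rho(\Mzek)=n+\sum_{i=1}^k\binom{n}{m-i+1}$, which is part (1). When $n$ is even, the quotient is singular with $\tfrac12\binom{n}{m}$ singular points, but by Theorem \ref{thm1.1} the map $\overline{M}_{0,n\cdot\epsilon_1}\to(\PP^1)^n\git SL(2)$ is Kirwan's partial desingularization, a blow-up at those $\tfrac12\binom{n}{m}$ points; each such blow-up of a (quotient-singular) surface-transverse point contributes a single exceptional divisor, so $\rho(\overline{M}_{0,n\cdot\epsilon_1})=\rho\bigl((\PP^1)^n\git SL(2)\bigr)+\tfrac12\binom{n}{m}$, and one still has $\rho\bigl((\PP^1)^n\git SL(2)\bigr)=n$ by the same connectedness argument applied away from the singular locus (whose complement has codimension $\ge 2$). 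Feeding this into the telescoped formula with the sum now starting at $i=2$ gives part (2).

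As a cross-check I would verify consistency at the top: plugging $k=m-2$ into part (1) (resp. part (2)) must reproduce \eqref{eq-eqPic2}, i.e. $n+\sum_{i=1}^{m-2}\binom{n}{m-i+1}=2^{n-1}-\binom{n}{2}-1$ for odd $n$, and the analogous identity for even $n$; these are elementary binomial identities, so I would just remark that they hold rather than expand them. The one genuine subtlety — the place I expect to have to be careful — is the even case: one must make sure that the Picard-number recursion \eqref{eq-eqPicNumber}, which was derived by contracting $\binom{n}{m-k+1}$ \emph{disjoint} boundary divisors, is correctly matched at $k=1$ with the \emph{blow-up} description coming from Theorem \ref{thm1.1}, so that the exceptional divisors of the partial desingularization are precisely the $\binom{n}{m}$ boundary divisors $D_{I,I^c}$ of $\overline{M}_{0,n\cdot\epsilon_1}$ with $|I|=m$ (which come in $\tfrac12\binom{n}{m}$ pairs $\{I,I^c\}$, each pair over one singular point). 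Once this bookkeeping is pinned down, the lemma follows immediately.
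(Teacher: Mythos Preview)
Your bottom-up approach has a circularity problem. In the paper's logical structure, Lemma~\ref{lem-PicNumMze} sits in the preliminaries (\S\ref{sec2}) and is \emph{used} in the proof of Theorem~\ref{thm1.1} (which is Theorem~\ref{thm4-1}): the isomorphism $F_{n-m+k}\git G \cong \Mzek$ is established precisely by matching Picard numbers, with one side supplied by Lemma~\ref{lem-PicNumMze}. So when you invoke Theorem~\ref{thm1.1} in the even case --- to say that $\Mzeo \to (\PP^1)^n\git G$ is the blow-up at $\tfrac12\binom{n}{m}$ points, and hence to compute $\rho(\Mzeo)$ --- you are assuming what you need to prove. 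The same issue arises, more mildly, in the odd case: the identification $\Mzez \cong (\PP^1)^n\git G$ is itself part of Theorem~\ref{thm4-1}~(ii), not an input to it.

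The paper's argument avoids all of this by running the recursion \eqref{eq-eqPicNumber} \emph{downward} from the known value \eqref{eq-eqPic2} at the top $k=m-2$. What you describe as a ``cross-check'' --- the binomial identities showing that the closed form reproduces $2^{n-1}-\binom{n}{2}-1$ at $k=m-2$ --- is in fact the whole proof: once you know $\rho(\Mzemt)$ and the recursion, the stated formulas for $\rho(\Mzek)$ are forced, with no reference to the GIT quotient or to Theorem~\ref{thm1.1}. Your computations $\rho\bigl((\PP^1)^n\git G\bigr)=n$ and the jump by $\tfrac12\binom{n}{m}$ under partial desingularization are correct as statements, but in the paper they appear as \emph{outputs} of \S\ref{sec4} (see Step~4 there), not as inputs to \S\ref{sec2}.
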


\subsection{Partial desingularization}\label{sec2.2}
We recall a few results from \cite{Kirwan, Hu} on change of
stability in a blow-up.

Let $G$ be a complex reductive group acting on a projective
nonsingular variety $X$. Let $L$ be a $G$-linearized ample line
bundle on $X$. Let $Y$ be a $G$-invariant closed subvariety of
$X$, and let $\pi : \widetilde{X} \to X$ be the blow-up of $X$
along $Y$, with exceptional divisor $E$. Then for sufficiently
large $d$, $L_d = \pi^*L^d \otimes \cO(-E)$ becomes very ample,
and there is a natural lifting of the $G$-action to $L_d$ (
\cite[\S3]{Kirwan}).

Let $X^{ss}$(resp. $X^s$) denote the semistable (resp. stable)
part of $X$. With respect to the polarizations $L$ and $L_d$, the
following hold (\cite[\S3]{Kirwan} or \cite[Theorem 3.11]{Hu})
:\begin{equation}\label{eq-StabBlowup} \widetilde{X}^{ss} \subset
\pi^{-1}(X^{ss}), \qquad \widetilde{X}^{s} \supset
\pi^{-1}(X^{s}).\end{equation} In particular, if $X^{ss} = X^s$,
then $\widetilde{X}^{ss} = \widetilde{X}^s = \pi^{-1}(X^s)$.

For the next lemma, let us suppose $Y^{ss}=Y\cap X^{ss}$ is
nonsingular. We can compare the GIT quotient of $\widetilde{X}$ by
$G$ with respect to $L_d$ with the quotient of $X$ by $G$ with
respect to $L$.
\begin{lemma}\cite[Lemma 3.11]{Kirwan} \label{blowupGIT}
For sufficiently large $d$, $\widetilde{X}\git G$ is the blow-up
of $X\git G$ along the image $Y \git G$ of $Y^{ss}$.
\end{lemma}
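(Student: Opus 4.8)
In short, this lemma asserts that blow-up and GIT quotient commute, and the plan is to exhibit both $\widetilde X\git G$ and $\mathrm{Bl}_{Y\git G}(X\git G)$ as $\proj$ of a graded ring of $G$-invariant sections over $X\git G$ and to match the two rings for $d$ sufficiently large. Throughout it suffices to work over the semistable loci $X^{ss}$ and $\widetilde X^{ss}$, which are nonsingular ($\widetilde X^{ss}$ because it is obtained from the nonsingular $X^{ss}$ by blowing up the nonsingular $Y^{ss}$). After replacing $L$ by a power if necessary it descends to an ample bundle on $X\git G$, still denoted $L$, so that $X\git G=\proj\bigoplus_{k\ge0}H^0(X,L^k)^G$. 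Since $L$ is ample and $G$ is reductive, Serre vanishing together with the exactness of the functor of $G$-invariants shows that $H^0(X,L^k)^G\to H^0(Y^{ss},L^k)^G$ is surjective for $k\gg0$; as $W\git G$ depends only on $W^{ss}$, this identifies $Y\git G=Y^{ss}\git G$ with a closed subscheme of $X\git G$ whose ideal sheaf $\cI_{Y\git G}$ satisfies $H^0(X\git G,\cI_{Y\git G}\otimes L^k)=H^0(X,L^k\otimes\cI_Y)^G$ for $k\gg0$, where $\cI_Y$ is the ideal sheaf of $Y$. The inclusion \eqref{eq-StabBlowup} induces a morphism $\bar\pi:\widetilde X\git G\to X\git G$, and the goal is to show that $\bar\pi$ is the blow-up along $Y\git G$.

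Next I would compute the two sides. On one hand $\widetilde X\git G=\proj\bigoplus_{N\ge0}H^0(\widetilde X,L_d^{N})^G$; since $L_d^{N}=\pi^*L^{dN}\otimes\cO(-NE)$ and $Y^{ss}$ is regularly embedded in $X^{ss}$, one has $R^{i}\pi_*\cO(-NE)=0$ for $i>0$ and $\pi_*\cO(-NE)=\cI_Y^{N}$ over the semistable locus for all $N\ge0$, so the projection formula gives $H^0(\widetilde X,L_d^{N})^G=H^0(X,L^{dN}\otimes\cI_Y^{N})^G$. On the other hand $\mathrm{Bl}_{Y\git G}(X\git G)$ is the relative $\proj$ over $X\git G$ of the Rees algebra $\bigoplus_{N\ge0}\cI_{Y\git G}^{N}$; for $d\gg0$, twisting its natural polarization by $L^{d}$ realizes it as $\proj\bigoplus_{N\ge0}(J_d)^{N}$, where $J_d=H^0(X\git G,\cI_{Y\git G}\otimes L^{d})=H^0(X,L^{d}\otimes\cI_Y)^G$ and $(J_d)^{N}$ denotes its $N$-th power inside $H^0(X,L^{dN})^G$.

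Thus the proof reduces to showing that, for $d\gg0$, the natural inclusion $(J_d)^{N}\subset H^0(X,L^{dN}\otimes\cI_Y^{N})^G$ is an equality in all large degrees $N$; equivalently, that forming $G$-invariant sections is compatible with taking powers of the ideal of $Y\git G$, uniformly in the degree, so that $\cI_{Y\git G}^{N}$ agrees with the $G$-invariant direct image of $\cI_Y^{N}$. I expect this to be the main obstacle, and it is exactly where the hypotheses that $d$ be sufficiently large and that $Y^{ss}$ be nonsingular are genuinely used: nonsingularity of $Y^{ss}$ in the nonsingular $X^{ss}$ provides the regularity of the center, and the attendant Tor-vanishing, needed for the $N$-th power of $\cI_{Y\git G}$ to pull back to $\cI_{Y^{ss}}^{N}$ (so that powers of the ideal downstairs genuinely compute powers of invariant sections upstairs), while taking $d$ large supplies the uniform Serre-type vanishing and global generation that turns this sheaf-level compatibility into the required equality of graded rings, up to a harmless truncation in low degrees. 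One must also invoke \eqref{eq-StabBlowup} once more to check that $\widetilde X^{ss}$ differs from $\pi^{-1}(X^{ss})$ only over a locus that is contracted in the quotient, so that the above computation of $\widetilde X\git G$ is unaffected; keeping track of this change of semistability under blow-up is a second delicate point. Granting all of this, one obtains a canonical isomorphism $\widetilde X\git G\cong\mathrm{Bl}_{Y\git G}(X\git G)$ over $X\git G$, so $\bar\pi$ is the blow-up of $X\git G$ along $Y\git G$. The technical core just outlined is carried out in detail in \cite[Lemma 3.11]{Kirwan}, whose argument I would follow.
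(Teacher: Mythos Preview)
The paper does not give its own proof of this lemma; it merely cites \cite[Lemma 3.11]{Kirwan} and then, crucially, clarifies in the very next paragraph what the statement means: the blow-up of $X\git G$ is taken along the ideal sheaf $(\cI^m)_G$, the $G$-invariant part of $\cI^m$ for some $m$, \emph{not} along the reduced ideal of $Y\git G$ or any power thereof. Your outline misses exactly this point.

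Your framework is sound: you correctly identify $\widetilde X\git G$ with $\proj\bigoplus_N H^0(X,L^{dN}\otimes\cI_Y^N)^G$, i.e.\ with the blow-up along the graded ideal whose $N$-th piece is $(\cI_Y^N)_G$. But you then attempt to match this with $\proj\bigoplus_N (J_d)^N$ where $J_d=H^0(X,L^d\otimes\cI_Y)^G$, which amounts to proving $(\cI_Y^N)_G=((\cI_Y)_G)^N$ for large $N$. This equality fails in general: taking $G$-invariants does not commute with taking powers of an ideal, and neither the smoothness of $Y^{ss}$ nor taking $d$ large repairs this. Smoothness of the center does give you $\pi_*\cO(-NE)=\cI_Y^N$, as you use, but that says nothing about how invariants interact with products. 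Indeed, the paper's immediately following Lemma~\ref{lem-specialcasebl} is devoted to isolating the special situations (free $\bar G$-action, or $\CC^*$-stabilizers with weights $\pm l$ on the normal space, etc.) in which the blow-up \emph{does} coincide with the blow-up along the reduced ideal; were your argument valid, that lemma would be unnecessary.

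So Kirwan's actual conclusion is weaker than what you sketch: one obtains $\widetilde X\git G$ as a blow-up of $X\git G$ along an ideal supported on $Y\git G$ (namely $(\cI^m)_G$), but not in general the blow-up along the ideal sheaf $\cI_{Y\git G}$ you wrote down. Your deferral to \cite{Kirwan} at the end is appropriate, but the outline you give of what is proved there overshoots the target.
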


Let $\cI$ be the ideal sheaf of $Y$. In the statement of Lemma
\ref{blowupGIT}, the blow-up is defined by the ideal sheaf
$(\cI^m)_G$ which is the $G$-invariant part of $\cI^m$,
for some $m$. (See the proof of \cite[Lemma
3.11]{Kirwan}.) In the cases considered in this paper, the
blow-ups always take place along \emph{reduced} ideals, i.e.
$\widetilde{X}\git G$ is the blow-up of $X\git G$ along the
subvariety $Y\git G$ because of the following.
\begin{lemma}\label{lem-specialcasebl}
Let $G=SL(2)$ and $\CC^*$ be the maximal torus of $G$. Suppose
$Y^{ss}$ is smooth. The blow-up $\widetilde{X}\git G\to X\git G$
is the blow-up of the reduced ideal of $Y\git G$ if any of the
following holds:
\begin{enumerate} \item The stabilizers of points in $X^{ss}$ are
all equal to the center $\{\pm 1\}$, i.e. $\bar G=SL(2)/\{\pm 1\}$
acts on $X^{ss}$ freely.
\item If we denote the $\CC^*$-fixed locus in $X^{ss}$ by
$Z^{ss}_{\CC^*}$, $Y^{ss}=Y\cap X^{ss}=GZ^{ss}_{\CC^*}$ and the
stabilizers of points in $X^{ss}-Y^{ss}$ are all $\{\pm 1\}$.
Furthermore suppose that the weights of the action of $\CC^*$ on
the normal space of $Y^{ss}$ at any $y\in Z^{ss}_{\CC^*}$ are $\pm
l$ for some $l\ge 1$.
\item There exists a smooth divisor $W$ of $X^{ss}$ which
intersects transversely with $Y^{ss}$ such that the stabilizers of
points in $X^{ss}-W$ are all $\mathbb{Z}_2=\{\pm 1\}$ and the
stabilizers of points in $W$ are all isomorphic to $\mathbb{Z}_4$.
\end{enumerate}
In the cases (1) and (3), $Y\git G=Y^s/G$ and $X\git G=X^s/G$ are
nonsingular and the morphism $\widetilde{X}\git G\to X\git G$ is
the smooth blow-up along the smooth subvariety $Y\git G$.
\end{lemma}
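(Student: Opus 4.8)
The plan is to verify the claim case by case using Luna's slice theorem, which reduces everything to a local computation of the GIT quotient near the fixed locus of a stabilizer. In all three cases $Y^{ss}$ is smooth, so by Lemma \ref{blowupGIT} the morphism $\widetilde{X}\git G\to X\git G$ is the blow-up along the ideal $(\cI^m)_G$ for some $m\ge 1$, and the whole point is to show that this ideal coincides with the reduced ideal of $Y\git G$ (and moreover, in cases (1) and (3), that everything in sight is smooth). Since the question is local on $X\git G$, I would pick a point $p$ of $Y\git G$, lift it to a point $x\in Y^{ss}$ with closed orbit, and let $H$ be its stabilizer; Luna's slice theorem gives a $G$-equivariant \'etale neighborhood of $Gx$ of the form $G\times_H N$, where $N$ is a slice with $H$-action, and identifies a neighborhood of $p$ in $X\git G$ with $N\git H=N/\!/H$. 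The blow-up operation commutes with this identification (again by the commutation of blow-up and quotient, \S\ref{sec2.2}), so it suffices to analyze the model $N\git H$ and the image therein of the $H$-invariant part $Y\cap N$.

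For case (1), $\bar G=PGL(2)$ acts freely on $X^{ss}$, so $H=\{\pm1\}$ acts trivially on $N$, hence $N\git H=N$ is smooth, $X\git G=X^s/G$ is a manifold, and $Y\cap N$ is the smooth subvariety cutting out $Y^s/G$; its ideal is radical and the blow-up is the ordinary smooth blow-up. For case (2), the stabilizer at a point of $Z^{ss}_{\CC^*}$ is $\CC^*$ (up to conjugacy, and after passing to $\bar G$ this is still $\CC^*/\{\pm1\}\cong\CC^*$), and by hypothesis $\CC^*$ acts on the normal space to $Y^{ss}$ with weights $\pm l$. Thus the slice $N$ splits as $T_x Y^{ss}\oplus(\text{normal directions})$ where $\CC^*$ acts trivially on the first summand (since $Y^{ss}=GZ^{ss}_{\CC^*}$, the fixed directions of $\CC^*$ in $N$ are exactly the tangent directions to $Y^{ss}$) and with weights $\pm l$ on a two-dimensional complement with coordinates $u,v$. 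Then $N\git H=(\text{smooth})\times\spec\CC[u^l v^l]=(\text{smooth})\times\spec\CC[w]$, which is again smooth; here $Y\cap N$ is $\{u=v=0\}$, and its image in $N\git H$ is the divisor $\{w=0\}$. A direct computation of $(\cI^m)_G$ in the ring $\CC[u,v]^{\CC^*}$-module structure shows the relevant invariant ideal is $(w)$, which is already reduced — note that although $Y\git G$ here is a Cartier divisor (as in the $n$ even, last-arrow situation of Theorem \ref{thm1.1}), the GIT quotient $X\git G$ itself need not be smoothened further, consistent with the statement that in case (2) we do \emph{not} claim smoothness. For case (3), the stabilizer jumps to $\ZZ_4$ along the transversal divisor $W$; the extra $\ZZ_2\subset\ZZ_4$ modulo the center $\{\pm1\}$ acts on the slice, but one checks using transversality of $W$ and $Y^{ss}$ that it acts as a reflection fixing $W$ and acting by $-1$ on the normal line to $W$ while preserving $Y$, so the quotient by this reflection is still smooth (a reflection quotient of affine space), $Y\cap N$ descends to a smooth subvariety, and the blow-up is smooth along it.

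The main obstacle I anticipate is case (2): one must be careful that the correct slice representation really is (trivial $H$-action on $T Y^{ss}$) $\oplus$ (weights $\pm l$), and then carry out the invariant-theoretic identification of $(\cI^m)_G$ inside $\CC[u,v]^{\CC^*}=\CC[uv]$ precisely enough to see it is the radical ideal $(uv)$ rather than some power $(uv)^r$. This requires spelling out how the ideal $\cI$ of $Y$ in $X$ pulls back to $\cI\cap N=(u,v)$ and how invariants of powers behave — the power $m$ in Lemma \ref{blowupGIT} gets absorbed because $\CC[uv]$ is a polynomial ring in one variable, so every invariant ideal is principal and the Rees algebra of $(u,v)^m$ has the same $\proj$ as that of $(u,v)$. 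Once the three local models are pinned down, assembling them into the global statement is immediate since $X\git G$ is covered by such Luna neighborhoods and the conclusions (reducedness of the blow-up center, and smoothness in cases (1),(3)) are local.
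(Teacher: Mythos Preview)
Your overall strategy---Luna slices, then a local computation---is exactly the paper's, and your treatment of cases (1) and (3) matches it closely. The gap is in case (2): you assume the normal space to $Y^{ss}$ is \emph{two}-dimensional (``a two-dimensional complement with coordinates $u,v$''), but the hypothesis only says the weights are $\pm l$, not that each occurs with multiplicity one. This matters: the paper invokes case (2) in Step~2 of the proof of Theorem~\ref{thm4-1} (even $n$), where the normal space to the orbit $\Sigma_m^{S,S^c}$ is $\CC^{m-1}\times\CC^{m-1}$. In that generality your ``every invariant ideal is principal'' argument fails, since the invariant ring $\CC[x_iy_j\,:\,1\le i\le r,\ 1\le j\le s]$ is no longer a polynomial ring in one variable. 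What you actually need is the identity $(I^{2m})_{\CC^*}=(I_{\CC^*})^m$ for $I=\langle x_1,\dots,x_r,y_1,\dots,y_s\rangle$, which the paper checks by a direct monomial argument (an invariant monomial in $I^{2m}$ has equal $x$- and $y$-degree, each $\ge m$, so it factors as a product of $m$ terms $x_iy_j$). The paper then separately verifies that $I_{\CC^*}$ is radical using factoriality of the polynomial ring and connectedness of $\CC^*$; your sketch does not address this either, beyond the trivial one-variable case.

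Two smaller slips in the same paragraph: the invariant ring in your two-dimensional model is $\CC[uv]$, not $\CC[u^lv^l]$, since $uv$ already has weight $0$; and the fixed part of the slice is $T_xY^{ss}/T_x(Gx)$ rather than all of $T_xY^{ss}$, though this does not affect the argument.
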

\begin{proof} Let us consider the first case. Let $\bar
G=PGL(2)$. By Luna's \'etale slice theorem \cite[Appendix 1.D]{MFK},
\'etale locally near a point in $Y^{ss}$, $X^{ss}$ is
$\bar{G}\times S$ and $Y^{ss}$ is $\bar{G}\times S^Y$ for some
nonsingular locally closed subvariety $S$ and $S^Y=S\cap Y$. Then
\'etale locally $\widetilde{X}^{ss}$ is $\bar{G}\times
\mathrm{bl}_{S^Y}S$ where $\mathrm{bl}_{S^Y}S$ denotes the blow-up
of $S$ along the nonsingular variety $S^Y$. Thus the quotients
$X\git G$, $Y\git G$ and $\widetilde{X}\git G$ are \'etale locally
$S$, $S^Y$ and $\mathrm{bl}_{S^Y}S$ respectively. This implies
that the blow-up $\widetilde{X}\git G\to X\git G$ is the smooth
blow-up along the reduced ideal of $Y\git G$.

For the second case, note that the orbits in $Y^{ss}$ are closed
in $X^{ss}$ because the stabilizers are maximal. So we can again
use Luna's slice theorem to see that \'etale locally near a point
$y$ in $Y^{ss}$, the varieties $X^{ss}$, $Y^{ss}$ and
$\widetilde{X}$ are respectively $G\times_{\CC^*}S$,
$G\times_{\CC^*}S^0$ and $G\times_{\CC^*}\mathrm{bl}_{S^0}S$ for
some nonsingular locally closed $\CC^*$-equivariant subvariety $S$
and its $\CC^*$-fixed locus $S^0$. Therefore the quotients $X\git
G$, $Y\git G$ and $\widetilde{X}\git G$ are \'etale locally $S\git
\CC^*$, $S^0$ and $(\mathrm{bl}_{S^0}S)\git \CC^*$. Thus it
suffices to show
$$(\mathrm{bl}_{S^0}S)\git \CC^*\cong
\mathrm{bl}_{S^0}(S\git \CC^*).$$ Since $X$ is smooth, \'etale
locally we can choose our $S$ to be the normal space to the orbit
of $y$ and $S$ is decomposed into the weight spaces $S^0\oplus
S^+\oplus S^-$. As the action of $\CC^*$ extends to $SL(2)$, the
nonzero weights are $\pm l$ by assumption. If we choose
coordinates $x_1,\cdots, x_r$ for $S^+$ and $y_1,\cdots, y_s$ for
$S^-$, the invariants are polynomials of $x_iy_j$ and thus
$(I^{2m})_{\CC^*}=(I_{\CC^*})^m$ for $m\ge 1$ where $I=\langle
x_1,\cdots,x_r,y_1,\cdots,y_s \rangle$ is the ideal of $S^0$. By
\cite[II Exe. 7.11]{Hartshorne}, we have
$$\mathrm{bl}_{S^0}S=\mathrm{Proj}_S(\oplus_m I^m)\cong
\mathrm{Proj}_S(\oplus_m I^{2m})$$ and thus
$$(\mathrm{bl}_{S^0}S)\git \CC^*
=\mathrm{Proj}_{S\git \CC^*}(\oplus_m I^{2m})_{\CC^*}
=\mathrm{Proj}_{S\git \CC^*}\left(\oplus_m
(I_{\CC^*})^{m}\right)=\mathrm{bl}_{I_{\CC^*}}(S\git \CC^*).$$
Since $S$ is factorial and $I$ is reduced, $I_{\CC^*}$ is reduced.
(If $f^m\in I_{\CC^*}$, then $f\in I$ and $(g\cdot f)^m=f^m$ for
$g\in \CC^*$. By factoriality, $g\cdot f$ may differ from $f$ only
by a constant multiple, which must be an $m$-th root of unity.
Because $\CC^*$ is connected, the constant must be $1$ and hence
$f\in I_{\CC^*}$.) Therefore $I_{\CC^*}$ is the reduced ideal of
$S^0$ on $S\git \CC^*$ and hence $(\mathrm{bl}_{S^0}S)\git
\CC^*\cong \mathrm{bl}_{S^0}(S\git \CC^*)$ as desired.

The last case is similar to the first case. Near a point in $W$,
$X^{ss}$ is \'etale locally $\bar G\times_{\ZZ_2}S$ where
$S=S_W\times\CC$ for some smooth variety $S_W$. $\ZZ_2$ acts
trivially on $S_W$ and by $\pm 1$ on $\CC$. Etale locally $Y^{ss}$
is $\bar G\times_{\ZZ_2}S_Y$ where $S_Y=(S_W\cap Y)\times \CC$.
The quotients $X\git G$, $Y\git G$ and $\widetilde{X}\git G$ are
\'etale locally $S_W\times \CC$, $(S_W\cap Y)\times \CC$ and
$\mathrm{bl}_{S_W\cap Y}S_W\times \CC$. This proves our lemma.
\end{proof}
\begin{corollary}\label{cor-blcomquot}
Suppose that (1) of Lemma \ref{lem-specialcasebl} holds. If
$Y^{ss}=Y_1^{ss}\cup\cdots\cup Y_r^{ss}$ is a transversal union of
smooth subvarieties of $X^{ss}$ and if $\widetilde{X}$ is the
blow-up of $X^{ss}$ along $Y^{ss}$, then $\widetilde{X}\git G$ is
the blow-up of $X\git G$ along the reduced ideal of $Y\git G$
which is again a transversal union of smooth varieties $Y_i\git
G$. The same holds under the condition (3) of Lemma
\ref{lem-specialcasebl} if furthermore $Y_i$ are transversal to
$W$.
\end{corollary}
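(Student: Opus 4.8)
The plan is to reproduce the proof of Lemma~\ref{lem-specialcasebl}, cases (1) and (3), the only new ingredient being that the transversal-union structure of $Y^{ss}$ is inherited by Luna's \'etale slice.

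\emph{Case (1).} Since $\bar G=PGL(2)$ acts freely on $X^{ss}$, Luna's \'etale slice theorem provides, near any point of $Y^{ss}$, an \'etale morphism $\bar G\times S\to X^{ss}$ with $S$ smooth under which $Y^{ss}$ pulls back to $\bar G\times S^Y$, where $S^Y=S\cap Y$, and $X\git G$ is \'etale-locally $S$. The first step is to observe that each $Y_i^{ss}$ pulls back to $\bar G\times S^{Y_i}$ with $S^{Y_i}=S\cap Y_i$ smooth, and that $S^Y=\bigcup_i S^{Y_i}$ is a transversal union in $S$: an \'etale morphism pulls back smooth subvarieties to smooth subvarieties, transversality of a finite family of smooth subvarieties at a point depends only on the complete local ring and hence is preserved under \'etale morphisms, and the extra smooth factor $\bar G$ does not affect transversality. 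The second step is that, because blow-up commutes with flat (in particular \'etale) base change and is trivial in the $\bar G$-direction, $\widetilde{X}^{ss}=\mathrm{bl}_{Y^{ss}}X^{ss}$ is \'etale-locally $\bar G\times\mathrm{bl}_{S^Y}S$, so $\widetilde{X}\git G$ is \'etale-locally $\mathrm{bl}_{\bigcup_i S^{Y_i}}S$. The third step applies \S\ref{sec2.3} to the smooth variety $S$ and the transversal union $\bigcup_i S^{Y_i}$: this blow-up is taken along the \emph{reduced} ideal and is itself a transversal union of the smooth blow-ups along the $S^{Y_i}$. Gluing these \'etale-local pictures, and noting that $\bar G$ smooth forces the reduced structure on $\bar G\times S^Y$ to descend to the reduced structure on $Y\git G=Y^s/G$, one concludes that $X\git G$ and each $Y_i\git G$ are nonsingular, $Y\git G=\bigcup_i Y_i\git G$ is a transversal union, and $\widetilde{X}\git G\to X\git G$ is the smooth blow-up along the reduced ideal of $Y\git G$.

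\emph{Case (3).} Away from $W$ one is back in the free situation of case (1). Near a point of $W$, the proof of Lemma~\ref{lem-specialcasebl}(3) supplies an \'etale-local model $X^{ss}\cong\bar G\times_{\ZZ_2}(S_W\times\CC)$, $Y^{ss}\cong\bar G\times_{\ZZ_2}\big((S_W\cap Y)\times\CC\big)$, with $X\git G$ and $\widetilde{X}\git G$ \'etale-locally $S_W\times\CC$ and $(\mathrm{bl}_{S_W\cap Y}S_W)\times\CC$. The hypothesis that each $Y_i$ meets $W$ transversally is precisely what forces $S_W\cap Y=\bigcup_i(S_W\cap Y_i)$ to be a transversal union of smooth subvarieties of $S_W$ (by the same \'etale-local reasoning as in case (1)), hence $(S_W\cap Y_i)\times\CC$ to be a transversal union in $S_W\times\CC$; invoking \S\ref{sec2.3} then finishes the argument exactly as before.

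The step I expect to require the most care is the second one: making precise that the blow-up of $X^{ss}$ along the \emph{reducible} center $Y^{ss}$ is genuinely \'etale-locally $\bar G\times\mathrm{bl}_{S^Y}S$ (respectively $\bar G\times_{\ZZ_2}$ of a blow-up), with $S^Y$ a transversal union of smooth subvarieties, so that one is entitled to quote \S\ref{sec2.3}. Once this compatibility of blow-up with the Luna charts and the descent of transversality are in hand, the rest is a routine repetition of the proof of Lemma~\ref{lem-specialcasebl}.
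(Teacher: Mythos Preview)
Your argument is correct, but the paper organizes the proof differently and more modularly. Rather than reopening Luna's slice theorem and tracking the transversal-union structure inside the slice, the paper first invokes Proposition~\ref{prop-bltrcen} \emph{globally} to factor $\mathrm{bl}_{Y^{ss}}X^{ss}$ as a sequence of smooth blow-ups along (proper transforms of) the irreducible components $Y_i^{ss}$; then Lemma~\ref{lem-specialcasebl} applies verbatim to each such smooth blow-up, giving a sequence of smooth blow-ups of $X\git G$ along the proper transforms of $Y_i\git G$; finally Proposition~\ref{prop-bltrcen} is used once more, in the quotient, to recombine this sequence into the single blow-up along $Y\git G$. The gain of the paper's route is that it never has to reprove the slice-theoretic compatibilities you flag in your ``second step'': each application of Lemma~\ref{lem-specialcasebl} concerns a \emph{smooth irreducible} center, exactly the situation already handled there. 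Your route, by contrast, is more self-contained and makes the transversality of the $Y_i\git G$ in $X\git G$ explicit from the local model, whereas in the paper this transversality is needed (and implicitly used) when Proposition~\ref{prop-bltrcen} is invoked the second time on the quotient.
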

\begin{proof}
Because of the assumption (1), $X^{ss}=X^s.$ If
$Y^{ss}=Y_1^{ss}\cup\cdots\cup Y_r^{ss}$ is a transversal union of
smooth subvarieties of $X^{ss}$ and if $\pi:\widetilde{X}\to
X^{ss}$ is the blow-up along $Y^{ss}$, then
$\widetilde{X}^s=\widetilde{X}^{ss}=\pi^{-1}(X^s)$ is the
composition of smooth blow-ups along (the proper transforms of)
the irreducible components $Y_i^{ss}$ by Proposition
\ref{prop-bltrcen} below. For each of the smooth blow-ups, the
quotient of the blown-up space is the blow-up of the quotient
along the reduced ideal of the quotient of the center by Lemma
\ref{lem-specialcasebl}. Hence $\widetilde{X}\git G\to X\git G$ is
the composition of smooth blow-ups along irreducible smooth
subvarieties which are proper transforms of $Y_i\git G$. Hence
$\widetilde{X}\git G$ is the blow-up along the union $Y\git G$ of
$Y_i\git G$ by Proposition \ref{prop-bltrcen} again.

The case (3) of Lemma \ref{lem-specialcasebl} is similar and we
omit the detail.
\end{proof}

Finally we recall Kirwan's partial desingularization construction
of GIT quotients. Suppose $X^{ss} \ne X^s$ and $X^s$ is nonempty.
Kirwan in \cite{Kirwan} introduced a systematic way of blowing up
$X^{ss}$ along a sequence of nonsingular subvarieties to obtain a
variety $\widetilde{X}$ with linearized $G$ action such that
$\widetilde{X}^{ss} = \widetilde{X}^s$ and $\widetilde{X}\git G$
has at worst finite quotient singularities only, as
follows:\begin{enumerate}
\item Find a maximal dimensional connected reductive subgroup $R$
such that the $R$-fixed locus $Z_R^{ss}$ in $X^{ss}$ is nonempty.
Then $$GZ_R^{ss}\cong G\times_{N^R}Z_R^{ss}$$ is a nonsingular
closed subvariety of $X^{ss}$ where $N^R$ denotes the normalizer
of $R$ in $G$.
\item Blow up $X^{ss}$ along $GZ_R^{ss}$ and find the semistable
part $X_1^{ss}$. Go back to step 1 and repeat this precess until
there are no more strictly semistable points.
\end{enumerate}
Kirwan proves that this process stops in finite steps and
$\widetilde{X}\git G$ is called the \emph{partial
desingularization} of $X \git G$. We will drop ``partial" if it is
nonsingular.

\subsection{Blow-up along transversal center}\label{sec2.3} We show that the
blow-up along a center whose irreducible components are
transversal smooth varieties is isomorphic to the result of smooth
blow-ups along the irreducible components in any order. This fact
can be directly proved but instead we will see that it is an easy
special case of beautiful results of L. Li in \cite{Li}.

\begin{definition} \cite[\S1]{Li}
(1) For a nonsingular algebraic variety $X$, an \emph{arrangement}
of subvarieties $S$ is a finite collection of nonsingular
subvarieties such that all nonempty scheme-theoretic intersections
of subvarieties in $S$ are again in $S$.

(2) For an arrangement $S$, a subset $B\subset S$ is called a
\emph{building set} of $S$ if for any $s \in S- B$, the minimal
elements in $\{b \in B : b \supset s\}$ intersect transversally
and the intersection is $s$.

(3) A set of subvarieties $B$ is called a \emph{building set} if
all the possible intersections of subvarieties in $B$ form an
arrangement $S$ (called the induced arrangement of $B$) and $B$ is
a building set of $S$.
\end{definition}

The \emph{wonderful compactification} $X_B$ of $X^0=X-\cup_{b\in
B} b$ is defined as the closure of $X^0$ in $\prod_{b\in
B}\mathrm{bl}_bX$. Li then proves the following.
\begin{theorem}\cite[Theorem 1.3]{Li} \label{thm-Li1}
Let $X$ be a nonsingular variety and $B = \{b_1, \cdots,b_n\}$ be
a nonempty building set of subvarieties of $X$. Let $I_i$ be the
ideal sheaf of $b_i \in B$. \begin{enumerate}\item The wonderful
compactification $X_B$ is isomorphic to the blow-up of $X$ along
the ideal sheaf $I_1I_2\cdots I_n$. \item If we arrange $B =
\{b_1, \cdots,b_n\}$ in such an order that the first $i$ terms
$b_1,\cdots,b_i$ form a building set for any $1\le i\le n$, then
$X_B = \mathrm{bl}_{\tilde{b}_n} \cdots \mathrm{bl}_{\tilde{b}_2}
\mathrm{bl}_{b_1} X$, where each blow-up is along a nonsingular
subvariety $\tilde{b}_i$.\end{enumerate}
\end{theorem}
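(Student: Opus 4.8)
I would treat the two parts separately, since (1) is the soft one and (2) carries the real content. For (1): recall the standard fact that for ideal sheaves $\cI_1,\dots,\cI_n$ on an integral nonsingular variety $X$, the multiplication maps $\cI_i^{\otimes d}\to\cI_i^{d}$ induce a surjection from the Segre product $\bigoplus_{d}(\cI_1^{d}\otimes\cdots\otimes\cI_n^{d})$ (tensor over $\cO_X$) onto the Rees algebra $\bigoplus_{d}(\cI_1\cdots\cI_n)^{d}$, hence a closed immersion
\[
\mathrm{bl}_{\cI_1\cdots\cI_n}X\;\hookrightarrow\;\mathrm{bl}_{\cI_1}X\times_X\cdots\times_X\mathrm{bl}_{\cI_n}X
\]
which is an isomorphism over $X\setminus\bigcup_i V(\cI_i)$. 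Since the left side is integral and contains that open set densely, it is exactly the closure of $X\setminus\bigcup_i V(\cI_i)$ in the fiber product, and the fiber product is closed in the ordinary product $\prod_i\mathrm{bl}_{\cI_i}X$ because $X$ is separated. Taking $\cI_i=I_i$, so that $V(I_i)=b_i$ and $X\setminus\bigcup_i V(I_i)=X^0$, this closure is by definition $X_B$; hence $X_B\cong\mathrm{bl}_{I_1\cdots I_n}X$. This argument uses only that $B$ is a finite collection of subvarieties, not that it is a building set.

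For (2) I would induct on $n$, the case $n=1$ being that $\mathrm{bl}_{b_1}X$ is integral with $X\setminus b_1$ dense. Assume the claim for $B'=\{b_1,\dots,b_{n-1}\}$, which is a building set by hypothesis, and put $Y=X_{B'}=\mathrm{bl}_{\tilde b_{n-1}}\cdots\mathrm{bl}_{b_1}X$ with structure morphism $\pi\colon Y\to X$. Forgetting the last coordinate of $\prod_{b\in B}\mathrm{bl}_bX$ gives a surjective projective birational morphism $X_B\to Y$; using that $\mathrm{bl}_{\pi^{-1}I_n\cdot\cO_Y}(Y)$ is the closure of $X^0$ inside $Y\times_X\mathrm{bl}_{I_n}X$, together with the inclusion of fiber products into products as above, one identifies $X_B\cong\mathrm{bl}_{\pi^{-1}I_n\cdot\cO_Y}(Y)$. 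So the theorem reduces to the assertion that $\pi^{-1}I_n\cdot\cO_Y$ is the reduced ideal sheaf of a nonsingular subvariety $\tilde b_n\subset Y$; granting this, $\mathrm{bl}_{\pi^{-1}I_n\cdot\cO_Y}(Y)=\mathrm{bl}_{\tilde b_n}(Y)$ and the induction closes.

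This last reduction is the heart of the matter and the step I expect to be hard: I must show that, as $Y$ is built from $X$ by the tower of smooth blow-ups coming from $B'$, the strict transform of the building-set member $b_n$ stays nonsingular and its total transform is already a reduced ideal sheaf. The approach I would take is to localize and reduce to a linear subspace arrangement, where the iterated blow-ups are the explicit De Concini--Procesi models and nonsingularity and reducedness of the relevant transforms — as well as the inheritance of the building-set property needed to keep the induction running — can be checked in coordinates; the delicate point is justifying such a local normal form for a building set in the first place. Failing that, one must run Li's direct inductive analysis of how an arrangement and a building set inside it transform under blowing up a minimal element (the source of the transversality statement quoted as \cite[Proposition 2.8]{Li}). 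Everything outside this structural bookkeeping — the reductions to (1), the Rees-algebra manipulations, and the closure arguments inside products of blow-ups — is formal.
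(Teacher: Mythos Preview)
The paper does not prove this statement; it is quoted verbatim as \cite[Theorem 1.3]{Li} and used as a black box (the only thing the present paper actually extracts from it is Proposition~\ref{prop-bltrcen}, which is the special case where the $b_i$ are already pairwise transversal). So there is no ``paper's own proof'' to compare your attempt against.

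That said, your outline is the standard one and matches Li's own strategy. Your argument for (1) is correct and, as you note, does not use the building-set hypothesis: the key point is that on an integral scheme the inverse image ideal of each $I_i$ becomes invertible once the product does, giving the map to the fiber product, and the surjection of Rees-type algebras makes it a closed immersion. For (2) you have correctly isolated the genuine content --- that after the first $n-1$ blow-ups the total transform $\pi^{-1}I_n\cdot\cO_Y$ is the reduced ideal of a nonsingular subvariety --- and you are right that this is exactly what Li proves via his inductive analysis of how an arrangement and a building set transform under blowing up a minimal element (the source of the transversality statement the paper cites as \cite[Proposition 2.8]{Li}). Your proposal is an accurate roadmap, but it is honest in not claiming to carry out that step; completing it would amount to reproducing the core of Li's paper.
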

Here $\tilde{b}_i$ is the \emph{dominant transform} of $b_i$ which
is obtained by taking the proper transform when it doesn't lie in
the blow-up center or the inverse image if it lies in the center,
in each blow-up. (See \cite[Definition 2.7]{Li}.)

Let $X$ be a smooth variety and let $Y_1, \cdots, Y_n$ be
transversally intersecting smooth closed subvarieties. Here,
\emph{transversal intersection} means that for any nonempty $S
\subset \{1, \cdots, n\}$ the intersection $Y_S:=\cap_{i \in
S}Y_i$ is smooth and the normal bundle $N_{Y_S/X}$ in $X$ of $Y_S$
is the direct sum of the restrictions of the normal bundles
$N_{Y_i/X}$ in $X$ of $Y_i$, i.e.
$$N_{Y_S/X} = \bigoplus_{i\in S} N_{Y_i/X}|_{Y_S}.$$
If we denote the ideal of $Y_i$ by $I_i$, the ideal of the union
$\cup_{i=1}^n Y_i$ is the product $I_1I_2\cdots I_n$. Moreover for
any permutation $\tau\in S_n$ and $1\le i\le n$,
$B=\{Y_{\tau(1)},\cdots,Y_{\tau(i)}\}$ is clearly a building set.
By Theorem \ref{thm-Li1} we obtain the following.
\begin{proposition}\label{prop-bltrcen}
Let $Y=Y_1\cup\cdots \cup Y_n$ be a union of transversally
intersecting smooth subvarieties of a smooth variety $X$. Then the
blow-up of $X$ along $Y$ is isomorphic to
\[
\mathrm{bl}_{\tilde Y_{\tau(n)}}\cdots \mathrm{bl}_{\tilde
Y_{\tau(2)}}\mathrm{bl}_{Y_{\tau(1)}} X
\]
for any permutation $\tau\in S_n$ where $\tilde{Y}_i$ denotes the
proper transform of $Y_i$.
\end{proposition}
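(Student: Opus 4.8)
The plan is to deduce Proposition \ref{prop-bltrcen} directly from Theorem \ref{thm-Li1} by verifying that a union of transversally intersecting smooth subvarieties, together with all of its intersections, forms a building set in the sense of Li, and that every initial segment of it (in any order) is again a building set. The only genuine content beyond bookkeeping is the identification of the ideal of the union with the product of the ideals, which is exactly the claim $I(\cup Y_i) = I_1 I_2 \cdots I_n$; this is where the transversality hypothesis (the normal-bundle splitting $N_{Y_S/X} = \bigoplus_{i\in S} N_{Y_i/X}|_{Y_S}$) does its work.

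First I would fix the arrangement $S$ generated by $Y_1, \dots, Y_n$: it consists of all the intersections $Y_S = \cap_{i\in S} Y_i$ for $\emptyset \ne S \subset \{1,\dots,n\}$, together with $X$ itself. Transversality guarantees each $Y_S$ is smooth, and since $Y_S \cap Y_T = Y_{S\cup T}$ the collection is closed under scheme-theoretic intersection, so $S$ is an arrangement. Next, for any $\tau \in S_n$ and any $1 \le i \le n$, I claim $B = \{Y_{\tau(1)}, \dots, Y_{\tau(i)}\}$ is a building set. Given any element $s$ of the induced arrangement of $B$ that is not in $B$, $s$ is an intersection $\cap_{j\in J} Y_{\tau(j)}$ with $|J|\ge 2$, and the minimal elements of $B$ containing $s$ are precisely the $Y_{\tau(j)}$, $j\in J$; these intersect transversally with intersection $s$ by the very definition of transversal intersection. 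Hence the building-set condition holds, and in particular $B = \{Y_1,\dots,Y_n\}$ is itself a building set, and the ordering $Y_{\tau(1)}, \dots, Y_{\tau(n)}$ satisfies the hypothesis of Theorem \ref{thm-Li1}(2).

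Then I would apply both parts of Theorem \ref{thm-Li1}: part (1) says the wonderful compactification $X_B$ is the blow-up of $X$ along $I_1 I_2 \cdots I_n$, while part (2) says $X_B = \mathrm{bl}_{\tilde Y_{\tau(n)}} \cdots \mathrm{bl}_{\tilde Y_{\tau(2)}} \mathrm{bl}_{Y_{\tau(1)}} X$ with each blow-up along a smooth center. To finish, I must identify $\mathrm{bl}_{I_1\cdots I_n} X$ with $\mathrm{bl}_Y X$, i.e. show $I_1 I_2 \cdots I_n$ is the ideal sheaf of the reduced union $Y = Y_1 \cup \cdots \cup Y_n$. This is local, so I work in a formal or étale neighborhood of a point of $Y_S$ (the deepest stratum through that point); transversality lets me choose coordinates in which each $Y_i$ ($i \in S$) is a coordinate hyperplane $\{x_i = 0\}$ and the $Y_j$ with $j\notin S$ do not pass through the point, so locally $I_i = (x_i)$ and the product $\prod_{i\in S}(x_i)$ is visibly the radical ideal cutting out $\bigcup_{i\in S}\{x_i = 0\} = Y$, the $Y_j$ for $j \notin S$ contributing the unit ideal. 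Since a union of distinct prime divisors... more precisely since the $Y_i$ are smooth and meet transversally the scheme $\cup Y_i$ is reduced, so $I_1\cdots I_n$ is radical and equals $I(Y)$.

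The step I expect to be the main obstacle — really the only nontrivial point — is this last local identification of the product ideal with the radical ideal of the union, and in particular checking that the dominant transforms $\tilde Y_i$ appearing in Theorem \ref{thm-Li1}(2) coincide with the proper transforms $\tilde Y_i$ claimed in the proposition. The subtlety is that Li's dominant transform takes the \emph{total} transform when a center lies inside the locus being blown up; here, because the centers $Y_{\tau(1)}, Y_{\tau(2)}, \dots$ are pairwise transversal and none is contained in another, at each stage the next center $Y_{\tau(i)}$ is not contained in any previous blow-up center, so its dominant transform is simply its proper transform, reconciling the two statements. With that observed, chaining the two parts of Theorem \ref{thm-Li1} gives $\mathrm{bl}_Y X = X_B = \mathrm{bl}_{\tilde Y_{\tau(n)}} \cdots \mathrm{bl}_{Y_{\tau(1)}} X$ for every $\tau$, which is the assertion.
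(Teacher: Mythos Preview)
Your approach is exactly the paper's: check that every initial segment $\{Y_{\tau(1)},\dots,Y_{\tau(i)}\}$ is a building set, identify $I(Y)$ with the product $I_1\cdots I_n$, and invoke both parts of Li's theorem; your remark reconciling Li's dominant transforms with proper transforms is a point the paper leaves implicit. One small slip in your local computation: you assume each $Y_i$ is a coordinate \emph{hyperplane} $\{x_i=0\}$, but the $Y_i$ may have arbitrary codimension---replace this by letting each $Y_i$ be cut out by a disjoint block of coordinates, after which $I_1\cdots I_n = I_1\cap\cdots\cap I_n$ follows from Tor-vanishing for the combined regular sequence.
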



\subsection{Log canonical model}
Let $X$ be a normal projective variety and $D = \sum a_i D_i$ be a
rational linear combination of prime divisors of $X$ with $0 < a_i
\le 1$. A \emph{log resolution} of $(X, D)$ is a birational
morphism $\pi : Y \to X$ from a smooth projective variety $Y$ to
$X$ such that $\pi^{-1}(D_i)$ and the exceptional divisors $E_i$
of $\pi$ are simple normal crossing divisors on $Y$. Then the
discrepancy formula
\[
    K_Y + \pi^{-1}_* (D) \equiv
    \pi^*(K_X + D) + \sum_{E_i : \mbox{exceptional}} a(E_i, X, D)E_i,
\]
defines the \emph{discrepancy} of $(X, D)$ by
\[
    \mathrm{discrep}(X,D) := \inf \{ a(E, X, D) : E : \mbox{exceptional}\}.
\]

Let $(X, D)$ be a pair where $X$ is a normal projective variety
and $D = \sum a_i D_i$ be a rational linear combination of prime
divisors with $0 < a_i \le 1$. Suppose that $K_X + D$ is
$\QQ$-Cartier. A pair $(X, D)$ is \emph{log canonical (abbrev.
lc)} if $\mathrm{discrep}(X,D) \ge -1$ and \emph{Kawamata log
terminal (abbrev. klt)} if $\mathrm{discrep}(X,D) > -1$ and
$\lfloor D \rfloor \le 0$.

When $X$ is smooth and $D$ is a normal crossing effective divisor,
$(X, D)$ is always lc and  is klt if all $a_i < 1$.

\begin{definition}
For lc pair $(X, D)$, the \emph{canonical ring} is
\[
    R(X, K_X + D) := \oplus_{l \ge 0} H^0(X, \cO_X(\lfloor l (K_X + D) \rfloor))
\]
and the \emph{log canonical model} is
\[
    \proj R(X, K_X + D).
\]
\end{definition}
In \cite{BCHM}, Birkar, Cascini, Hacon and McKernan proved that
for any klt pair $(X, D)$, the canonical ring is finitely
generated, so the log canonical model always exists.

\section{Moduli of weighted parameterized stable
curves}\label{sec3}

Let $X$ be a smooth projective variety. In this section, we
decompose the map $$X[n]\to X^n$$ defined by Fulton and MacPherson
(\cite{FM}) into a \emph{symmetric} sequence of blow-ups along
transversal centers. A. Mustata and M. Mustata already considered
this problem in their search for intermediate moduli spaces for
the stable map spaces in \cite[\S1]{Mustata}. Let us recall their
construction.

\bigskip

\noindent \textbf{Stage 0}: Let $F_0=X^n$ and $\Gamma_0=X^n\times
X$. For a subset $S$ of $\{1,2,\cdots,n\}$, we let
\[
\Sigma^S_0=\{(x_1,\cdots,x_n)\in X^n\,|\, x_i=x_j \text{ if }
i,j\in S\}, \quad \Sigma^k_0=\cup_{|S|=k}\Sigma_0^S
\]
and let  $\sigma^i_0\subset \Gamma_0$ be the graph of the $i$-th
projection $X^n\to X$. Then $\Sigma_0^n\cong X$ is a smooth
subvariety of $F_0$. For each $S$, fix any $i_S\in S$.

\bigskip

\noindent \textbf{Stage $1$}: Let $F_1$ be the blow-up of $F_0$
along $\Sigma_0^n$. Let $\Sigma_1^n$ be the exceptional divisor
and $\Sigma_1^S$ be the proper transform of $\Sigma_0^S$ for
$|S|\ne n$. Let us define $\Gamma_1$ as the blow-up of
$F_1\times_{F_0}\Gamma_0$ along $\Sigma^n_1\times_{F_0}\sigma^1_0$
so that we have a flat family
\[
\Gamma_1\to F_1\times_{F_0}\Gamma_0 \to F_1
\]
of varieties over $F_1$. Let $\sigma_1^i$ be the proper transform
of $\sigma_0^i$ in $\Gamma_1$. Note that $\Sigma^S_{1}$ for
$|S|=n-1$ are all disjoint smooth varieties of same dimension.

\bigskip

\noindent \textbf{Stage $2$}: Let $F_2$ be the blow-up of $F_1$
along $\Sigma_1^{n-1}=\sum_{|S|=n-1}\Sigma_1^S$. Let $\Sigma_2^S$
be the exceptional divisor lying over $\Sigma_1^S$ if $|S|=n-1$
and $\Sigma_2^S$ be the proper transform of $\Sigma_1^S$ for
$|S|\ne n-1$. Let us define $\Gamma_2$ as the blow-up of
$F_2\times_{F_1}\Gamma_1$ along the disjoint union of
$\Sigma^S_2\times_{F_1}\sigma^{i_S}_1$ for all $S$ with $|S|=n-1$
so that we have a flat family
\[
\Gamma_2\to F_2\times_{F_1}\Gamma_1 \to F_2
\]
of varieties over $F_2$. Let $\sigma_2^i$ be the proper transform
of $\sigma_1^i$ in $\Gamma_2$. Note that $\Sigma^S_{2}$ for
$|S|=n-2$ in $F_2$ are all transversal smooth varieties of same
dimension. Hence the blow-up of $F_2$ along their union is smooth
by \S\ref{sec2.3}.

\bigskip

We can continue this way until we reach the last stage.

\bigskip
\noindent \textbf{Stage $n-1$}: Let $F_{n-1}$ be the blow-up of
$F_{n-2}$ along $\Sigma_{n-2}^2=\sum_{|S|=2}\Sigma_{n-2}^S$. Let
$\Sigma_{n-1}^S$ be the exceptional divisor lying over
$\Sigma_{n-2}^S$ if $|S|=2$ and $\Sigma_{n-1}^S$ be the proper
transform of $\Sigma_{n-2}^S$ for $|S|\ne 2$. Let us define
$\Gamma_{n-1}$ as the blow-up of
$F_{n-1}\times_{F_{n-2}}\Gamma_{n-2}$ along the disjoint union of
$\Sigma^S_{n-1}\times_{F_{n-2}}\sigma^{i_S}_{n-2}$ for all $S$
with $|S|=2$ so that we have a flat family
\[
\Gamma_{n-1}\to F_{n-1}\times_{F_{n-2}}\Gamma_{n-2} \to F_{n-1}
\]
of varieties over $F_{n-1}$. Let $\sigma_{n-1}^i$ be the proper
transform of $\sigma_{n-2}^i$ in $\Gamma_{n-1}$.

\bigskip
Nonsingularity of the blown-up spaces $F_k$ are guaranteed by the
following.

\begin{lemma}\label{lem3-1}
$\Sigma^S_{k}$ for $|S|\ge n-k$ are transversal in $F_{k}$ i.e.
the normal bundle in $F_{k}$ of  the intersection
$\cap_i\Sigma^{S_i}_k$ for distinct $S_i$ with $|S_i|\ge n-k$ is
the direct sum of  the restriction of the normal bundles in $F_k$
of $\Sigma^{S_i}_k$.
\end{lemma}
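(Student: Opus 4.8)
The plan is to induct on $k$, with the base case $k=0$ being trivial (in $F_0=X^n$ the subvarieties $\Sigma_0^S$ with $|S|=n$ is the single small diagonal and there is nothing to check, or one invokes that the diagonals $\Sigma_0^S$ for arbitrary $S$ are transversal because locally $X^n$ splits off the corresponding diagonal factors). The inductive step is the heart of the matter: assuming $\Sigma_k^S$ for $|S|\ge n-k$ are transversal in $F_k$, I want to deduce the same for $\Sigma_{k+1}^S$ with $|S|\ge n-k-1$ in $F_{k+1}$, where $F_{k+1}=\mathrm{bl}_{\Sigma_k^{n-k}}F_k$ and $\Sigma_k^{n-k}=\bigsqcup_{|S|=n-k}\Sigma_k^S$ is a \emph{disjoint} union of smooth subvarieties of the same dimension.

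First I would record the local structure. Transversality is an \'etale-local (or formal-local, or analytic-local) statement, so near any point $p\in F_k$ I may choose local coordinates in which each $\Sigma_k^{S}$ through $p$ (for the finitely many $S$ with $|S|\ge n-k-1$ that contain $p$) is a coordinate subspace, with the normal directions to distinct $\Sigma_k^{S_i}$ being \emph{independent} sets of coordinates — this is exactly the inductive hypothesis (applied to those $S_i$ with $|S_i|\ge n-k$) together with the fact that the sets $\Sigma_k^S$ with $|S|=n-k-1$ only need to be checked against the larger ones and among themselves, which is handled because $\Sigma_k^S$ for $|S|=n-k-1$ is cut out, near $p$, by the equations defining "all points indexed by $S$ coincide", and for two such sets $S,S'$ one uses that $\Sigma_k^{S}\cap\Sigma_k^{S'}=\Sigma_k^{S\cup S'}$ is again one of the transversal members (by the arrangement property, since $|S\cup S'|>n-k-1$ unless $S=S'$). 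The key combinatorial point, inherited from the configuration-space picture, is that the normal bundle of $\Sigma_k^S$ "records the coincidence among the points of $S$", and coincidences along disjoint or nested index sets contribute independent normal directions.

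Now blow up along $\Sigma_k^{n-k}$. Since the components $\Sigma_k^S$ ($|S|=n-k$) of the center are pairwise disjoint, it suffices to work near one of them, say $\Sigma_k^{S_0}$. In local coordinates split as (coordinates along $\Sigma_k^{S_0}$) $\times$ (normal coordinates $z_1,\dots,z_c$ to $\Sigma_k^{S_0}$), blowing up replaces the normal $\CC^c$ by its blow-up at the origin. I must check two things in the chart downstairs: (i) the exceptional divisor $\Sigma_{k+1}^{S_0}$ is transversal to all the $\Sigma_{k+1}^{S}$, $|S|\ge n-k-1$, that meet it; (ii) the proper transforms $\Sigma_{k+1}^{S}$ of $\Sigma_k^S$ remain mutually transversal. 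For (ii): a member $\Sigma_k^S$ with $|S|\ge n-k-1$ either is disjoint from $\Sigma_k^{S_0}$ (nothing changes), or contains $\Sigma_k^{S_0}$ — which happens precisely when $S\subset S_0$, impossible for $|S|\ge n-k-1$ unless $S=S_0$ — or meets $\Sigma_k^{S_0}$ transversally in $\Sigma_k^{S\cup S_0}$; in the last case, by the inductive transversality the normal coordinates of $\Sigma_k^S$ are disjoint from $z_1,\dots,z_c$, so the blow-up in the $z$-variables does not touch the equations of $\Sigma_k^S$, and its proper transform is still a coordinate subspace in the complementary variables; transversality among several such is then immediate, and each is automatically transversal to the exceptional $\Sigma_{k+1}^{S_0}$ since the latter involves only the (blown-up) $z$-directions. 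For (i) and the interaction with the newly-promoted divisors $\Sigma_k^S$ with $|S|=n-k-1$: here one uses that such an $S$ with $S\not\subset S_0$ and $S\cap S_0\ne\emptyset$ meets $\Sigma_k^{S_0}$ properly, while if the only issue is $S_0\subsetneq S$... but $|S|=n-k-1<n-k=|S_0|$ forbids this, so the awkward "center contained in a member" case never arises; this is exactly why the construction blows up in decreasing order of $|S|$.

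The main obstacle, and the step I would spend the most care on, is item (ii) in the subtle case where a subvariety $\Sigma_k^{S}$ with $|S|=n-k-1$ \emph{intersects} the blow-up center $\Sigma_k^{S_0}$ (so $S_0\cap S\neq\emptyset$ but $S_0\not\subset S$): one must verify that the normal-bundle-splitting passes to the proper transform, i.e. that $N_{\Sigma_{k+1}^S/F_{k+1}}$ still splits off the restricted normal bundles of the other members and of the exceptional divisor. The clean way to do this is to phrase everything in Li's language: the collection of all $\Sigma_k^S$ is (the induced arrangement of) a building set, the diagonals $\Sigma^n,\Sigma^{n-1},\dots$ are blown up in the order required by Theorem \ref{thm-Li1}(2), and Li's results — in particular that dominant transforms of a building set again form a building set, with the expected transversality of minimal members — give precisely the transversality asserted here. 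So the honest proof is: identify the situation with \cite[\S1, Proposition 2.8]{Li} (as remarked in \S\ref{sec2.3}) and quote it; the inductive coordinate computation above is the self-contained alternative, whose only real content is the bookkeeping of which index sets are nested, disjoint, or crossing.
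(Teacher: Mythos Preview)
Your ultimate conclusion --- reduce to \cite[Proposition 2.8]{Li} --- is exactly what the paper does: it observes that $B_0=\{\Sigma_0^S\}$ is a building set, invokes Li's result that the dominant transforms $B_k=\{\Sigma_k^S\}$ again form a building set of an arrangement in $F_k$, and reads off the transversality from that.

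Your self-contained inductive argument, however, contains an actual error. In item (ii) you assert that $\Sigma_k^{S_0}\subset\Sigma_k^S$ (equivalently $S\subset S_0$) with $|S_0|=n-k$ and $|S|\ge n-k-1$ forces $S=S_0$. This is false: any $S$ obtained from $S_0$ by deleting a single element has $|S|=n-k-1$ and $S\subsetneq S_0$, so the blow-up center $\Sigma_k^{S_0}$ is \emph{strictly contained} in such a $\Sigma_k^S$. In local coordinates the normal directions $z_1,\dots,z_c$ to $\Sigma_k^{S_0}$ then \emph{include} the normal directions to $\Sigma_k^S$ (rather than being disjoint from them), so blowing up in the $z$'s genuinely alters $\Sigma_k^S$: its proper transform $\Sigma_{k+1}^S$ is the blow-up of $\Sigma_k^S$ along $\Sigma_k^{S_0}$, and one still has to verify that this meets the new exceptional divisor $\Sigma_{k+1}^{S_0}$ and the other proper transforms transversally. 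Your trichotomy disjoint/contains/transversal is correct, but the ``contains'' branch is populated and must be analyzed; this ``center contained in a member'' situation is precisely the nontrivial step that Li's building-set machinery is designed for, and it is not avoided by blowing up in decreasing $|S|$. So the citation of Li is not merely the clean alternative to your bookkeeping --- it is where the content lives.
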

\begin{proof}
This is a special case of the inductive construction of the
wonderful compactification in \cite{Li}. (See \S \ref{sec2.3}.) In
our situation, the building set is the set of all diagonals $B_0 =
\{\Sigma_0^S | S \subset \{1, 2, \cdots, n\}\}$. By
\cite[Proposition 2.8]{Li}, $B_k=\{\Sigma_k^S\}$ is a building set
of an arrangement in $F_k$ and hence the desired transversality
follows.
\end{proof}

By construction, $F_k$ are all smooth and $\Gamma_k\to F_k$ are
equipped with $n$ sections $\sigma_k^i$. When $\dim X=1$,
$\Sigma^2_{n-2}$ is a divisor and thus $F_{n-1}=F_{n-2}$. In
\cite[Proposition 1.8]{Mustata}, Mustata and Mustata prove that
the varieties $F_k$ are fine moduli spaces for some moduli
functors as follows.

\begin{definition}\label{def3.1} \cite[Definition 1.7]{Mustata}
A family of \emph{$k$-stable parameterized rational curves} over
$S$ consists of a flat family of curves $\pi:C\to S$, a morphism
$\phi:C\to S\times \PP^1$ of degree 1 over each geometric fiber
$C_s$ of $\pi$ and $n$ marked sections $\sigma^1,\cdots, \sigma^n$
of $\pi$ such that for all $s\in S$, \begin{enumerate}
\item no more than $n-k$ of the marked points $\sigma^i(s)$ in $C_s$ coincide;
\item any ending irreducible curve in $C_s$, except the parameterized one, contains
more than $n-k$ marked points;
\item all the marked points are smooth points of the curve $C_s$;
\item $C_s$ has finitely many automorphisms preserving the marked
points and the map to $\PP^1$.
\end{enumerate}
\end{definition}

\begin{proposition}\label{prop3.2} \cite[Proposition 1.8]{Mustata}
Let $X=\PP^1$. The smooth variety $F_k$ finely represents the
functor of isomorphism classes of families of $k$-stable
parameterized rational curves. In particular, $F_{n-2}=F_{n-1}$ is
the Fulton-MacPherson space $\PP^1[n]$.
\end{proposition}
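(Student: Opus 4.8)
The plan is to argue by induction on $k$, exploiting that $F_k$ is by construction the blow-up $\psi_k\colon F_k\to F_{k-1}$ along the transversal union $\Sigma^{n-k+1}_{k-1}=\bigcup_{|S|=n-k+1}\Sigma^S_{k-1}$ (Lemma~\ref{lem3-1}), and that blowing up is the universal operation making the ideal of its center invertible. The base case $k=0$ is immediate: in a $0$-stable family, condition~(2) of Definition~\ref{def3.1} forbids any ending contracted component (it would need more than $n$ marked points), so every geometric fiber is $\PP^1$; condition~(4) then forces the degree-one map $C_s\to\PP^1$ to be an isomorphism, so $\phi\colon C\to S\times\PP^1$ is finite and flat of degree one, hence an isomorphism $C\cong S\times\PP^1$, and the whole datum over $S$ reduces to the $n$ sections, i.e.\ a morphism $S\to(\PP^1)^n=F_0$. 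Thus $F_0$ with $\Gamma_0=(\PP^1)^n\times\PP^1$ and its graph sections finely represents the $0$-stable functor.

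For the inductive step, assume $(F_{k-1},\Gamma_{k-1}\to F_{k-1},\{\sigma^i_{k-1}\},\phi_{k-1})$ finely represents the $(k-1)$-stable functor. I first build a natural transformation $\tau$ from the $k$-stable functor to the $(k-1)$-stable one: given a $k$-stable family $(C\to S,\{\sigma^i\},\phi)$, contract exactly the components obstructing $(k-1)$-stability, namely the ending contracted components carrying exactly $n-k+1$ marked points. As for Hassett's reduction morphisms (cf.\ Proposition~\ref{reduction}) and Knudsen's contractions, this contraction exists, is flat over $S$ with $(k-1)$-stable geometric fibers (a collision of $n-k+1\le n-(k-1)$ points is now allowed), and commutes with base change; by induction it yields $g\colon S\to F_{k-1}$. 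I then lift $g$ to $h\colon S\to F_k$. For each $S_0$ with $|S_0|=n-k+1$, let $Z_{S_0}\subset S$ be the closed locus over which a fiber of $C$ carries an ending $\PP^1$-bubble holding the markings $\{\sigma^i:i\in S_0\}$. Since $C\to S$ is a family of nodal curves, $Z_{S_0}$ is a Cartier divisor (locally the smoothing $xy=t$ of the node attaching that bubble), and $g^{-1}(\Sigma^{S_0}_{k-1})=Z_{S_0}$; hence $g$ lifts through the blow-up of $F_{k-1}$ along $\Sigma^{S_0}_{k-1}$, the internal configuration of the bubble along $Z_{S_0}$ determining the value in the exceptional $\PP(N_{\Sigma^{S_0}_{k-1}/F_{k-1}})$. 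Carrying this out simultaneously for all such $S_0$ is consistent because the $\Sigma^{S_0}_{k-1}$, and so the $Z_{S_0}$, are pairwise transversal (Lemma~\ref{lem3-1}), and produces the desired $h\colon S\to F_k$.

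Conversely, given $h\colon S\to F_k$, set $C:=h^*\Gamma_k$. Using the explicit construction of $\Gamma_k$ as the blow-up of $F_k\times_{F_{k-1}}\Gamma_{k-1}$ along $\bigsqcup_{|S_0|=n-k+1}\Sigma^{S_0}_k\times_{F_{k-1}}\sigma^{i_{S_0}}_{k-1}$, one checks $C\to S$ is flat with $k$-stable geometric fibers: over the exceptional divisor $\Sigma^{S_0}_k$ one bubbles off a single ending $\PP^1$, contracted by $\phi_k$ and carrying the $n-k+1>n-k$ markings indexed by $S_0$, while no $n-k+1$ of the $n$ markings coincide; over the deeper strata the transversality of the $\Sigma^{S_0}_k$ (Lemma~\ref{lem3-1}) and the iterated nature of the tower yield a tree of such bubbles, each one $k$-stable, with every contracted $\PP^1$ carrying at least three special points (condition~(4)). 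These two assignments are mutually inverse: from $h$ one recovers $g=\psi_k\circ h$, and $\tau(h^*\Gamma_k)=(\psi_k h)^*\Gamma_{k-1}$ because contracting the bubbles of $\Gamma_k$ is precisely the blow-down $\Gamma_k\to F_k\times_{F_{k-1}}\Gamma_{k-1}=\psi_k^*\Gamma_{k-1}$. Uniqueness of the classifying morphism follows from the inductive uniqueness for $\tau(C)$ combined with the uniqueness of a lift through a blow-up. This completes the induction, with universal family $\Gamma_k\to F_k$.

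For $k=n-1$, the conditions of Definition~\ref{def3.1} say exactly that the $n$ markings are distinct and every non-parameterized ending component carries at least two of them; this is verbatim Fulton-MacPherson's moduli functor of stable $n$-pointed degenerations of $\PP^1$ \cite{FM} (the degree-one parameterized component playing the role of the fixed $\PP^1$), so $F_{n-1}$ and $\PP^1[n]$ finely represent the same functor and are isomorphic, while $F_{n-2}=F_{n-1}$ because for $\dim X=1$ the center $\Sigma^2_{n-2}$ of the last blow-up is already a divisor. I expect the main obstacle to be the \'etale-local analysis that recurs in the inductive step: showing that the family $\Gamma_k\to F_k$ built from the iterated blow-up is genuinely flat with $k$-stable fibers, both over the exceptional divisors and, more delicately, over the deeper strata; and showing that the contraction $\tau$ of a $k$-stable family makes $g^*\cI_{\Sigma^{n-k+1}_{k-1}}$ invertible and correctly records the bubble directions. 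Both reduce to an honest local description of how bubbling interacts with the transversal centers supplied by Li's construction (Lemma~\ref{lem3-1}); the remaining ingredients --- the base case, the bookkeeping of the functor bijection, and uniqueness --- are routine.
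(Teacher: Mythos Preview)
The paper does not give its own proof of this proposition: it is quoted verbatim as \cite[Proposition 1.8]{Mustata}, so there is no argument in the paper to compare against. Your inductive strategy --- trivially identifying $F_0$ with $(\PP^1)^n$, then at each stage contracting the minimal ending bubbles to obtain a $(k-1)$-stable family classified by $g\colon S\to F_{k-1}$, and finally lifting $g$ through the blow-up $\psi_k$ using that the pullback of the center ideal is invertible --- is the natural approach and is essentially how Musta\c{t}\u{a}--Musta\c{t}\u{a} proceed.

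The points you flag at the end are exactly the substantive ones. Two remarks. First, the assertion that $Z_{S_0}\subset S$ is Cartier for an \emph{arbitrary} base $S$ is correct but not quite for the reason you state: flatness of $C\to S$ together with the local structure of a node forces the family, \'etale-locally near the node, to have the form $xy=f$ for some $f\in\cO_S$, and $Z_{S_0}=\{f=0\}$; smoothness of the total space $C$ is not needed and is not available over a general $S$. Second, the scheme-theoretic equality $g^{-1}(\Sigma^{S_0}_{k-1})=Z_{S_0}$ (not merely set-theoretic) is the crux of the lifting step, since $\Sigma^{S_0}_{k-1}$ has codimension $n-k$ in $F_{k-1}$ while $Z_{S_0}$ has codimension one in $S$: one must check that the $n-k$ local generators of $\cI_{\Sigma^{S_0}_{k-1}}$ pull back to an ideal generated by the single smoothing parameter $f$, and that the remaining normal directions are recorded by the configuration of the $n-k+1$ colliding points on the bubble (giving the point of $\PP(N_{\Sigma^{S_0}_{k-1}/F_{k-1}})\cong\PP^{n-k-1}$). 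This is precisely the \'etale-local analysis you anticipate, and it is where the actual work in \cite{Mustata} lies. The identification $F_{n-2}=F_{n-1}=\PP^1[n]$ at the end is handled correctly.
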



\section{Blow-up construction of moduli of pointed stable
curves}\label{sec4}

In the previous section, we decomposed the natural map
$\PP^1[n]\to (\PP^1)^n$ of the Fulton-MacPherson space into a
sequence
\begin{equation}\label{eq4-1}\xymatrix{
\PP^1[n]\ar@{=}[r] & F_{n-2}\ar[r]^{\psi_{n-2}} &
F_{n-3}\ar[r]^{\psi_{n-3}} & \cdots \ar[r]^{\psi_2} &
F_1\ar[r]^{\psi_1} & F_0\ar@{=}[r] & (\PP^1)^n }\end{equation} of
blow-ups along transversal centers. By construction the morphisms
above are all equivariant with respect to the action of $G=SL(2)$.
For GIT stability, we use the \emph{symmetric} linearization
$L_0=\cO(1,\cdots,1)$ for $F_0$. For $F_k$ we use the
linearization $L_k$ inductively defined by
$L_k=\psi_k^*L_{k-1}\otimes \cO(-\delta_kE_k)$ where $E_k$ is the
exceptional divisor of $\psi_k$ and $\{\delta_k\}$ is a decreasing
sequence of sufficiently small positive numbers. Let
$m=\lfloor\frac{n}{2}\rfloor$. In this section, we prove the
following.

\begin{theorem}\label{thm4-1} (i) The GIT quotient
$F_{n-m+k}\git G$ for $1\le k\le m-2$ is
isomorphic to Hassett's moduli space of weighted pointed stable
rational curves $\Mzek$ with weights $n\cdot
\epsilon_k=(\epsilon_k,\cdots,\epsilon_k)$ where
$\frac1{m+1-k}<\epsilon_k\le \frac1{m-k}$. The induced maps on
quotients \[ \Mzek = F_{n-m+k}\git G \to F_{n-m+k-1}\git G
=\Mzeko\] are blow-ups along transversal centers for $k=2,\cdots,
m-2$.

(ii) If $n$ is odd, $$F_{m+1}\git G=\cdots =F_0\git
G=(\PP^1)^n\git G=\Mzez$$ and we have a sequence of blow-ups
\[
\Mzn=\Mzemt\to \Mzemth \to \cdots \to \Mzeo\to \Mzez =
(\PP^1)^n\git G
\]
whose centers are transversal unions of equidimensional smooth
varieties.

(iii) If $n$ is even, $\Mzeo$ is a desingularization of
$$(\PP^1)^n\git G=F_0\git G=\cdots =F_m\git G,$$
obtained by blowing up $\frac12\binom{n}{m}$
singular points so that we have a sequence of blow-ups
\[
\Mzn=\Mzemt\to \Mzemth \to \cdots \to \Mzeo\to (\PP^1)^n\git G.
\]
\end{theorem}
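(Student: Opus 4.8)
The plan is to combine the blow-up construction of the spaces $F_k$ from \S\ref{sec3} with the results on GIT and partial desingularization in \S\ref{sec2.2}, taking the GIT quotient of the entire sequence \eqref{eq4-1} by $G=SL(2)$ with the inductively chosen linearizations $L_k$. First I would set up the stability analysis on $F_0=(\PP^1)^n$: a point is (semi)stable for $L_0=\cO(1,\dots,1)$ exactly when at most (fewer than) $\lfloor n/2\rfloor$ points coincide, and by \eqref{eq-StabBlowup} the semistable and stable loci of $F_k$ are squeezed between the preimages of those of $F_{k-1}$. The key observation is that the blow-up centers $\Sigma^S_{k}$ of $\psi_k$ with $|S|=n-k+1$ lie in the \emph{unstable} locus as long as $n-k+1>\lfloor n/2\rfloor$, i.e.\ for $k\le m$ (and $k\le m+1$ when $n$ is odd); hence $F_k^s=F_0^s$ and $F_k^{ss}=F_0^{ss}$ for those $k$. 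Only once $|S|=m$ or $m+1$ do the centers meet the semistable locus, and this is where the quotients start to change.

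Next I would treat the odd and even cases. When $n$ is odd, $F_0^{ss}=F_0^s$, so by \eqref{eq-StabBlowup} the same holds for all $F_k$; moreover every stabilizer is $\{\pm1\}$, so $\bar G=PGL(2)$ acts freely and all quotients $F_k\git G$ are smooth. Forgetting the degree-1 map $f:C\to\PP^1$ defines a $G$-invariant morphism $F_{n-m+k}^s\to\Mzek$ and hence $\phi_k:F_{n-m+k}\git G\to\Mzek$; since both sides are smooth, I would prove $\phi_k$ is an isomorphism by matching Picard numbers. On the $F$-side, $\rho(F_{n-m+k}\git G)$ is computed by adding $\binom{n}{m-k}$ at each stage (the quotient of the blow-up center $\Sigma^S_{n-m+k}$ splits into $\binom{n}{m-k}$ disjoint smooth pieces because $\bar G$ acts freely, cf.\ Corollary \ref{cor-blcomquot}); on the Hassett side, Lemma \ref{lem-PicNumMze} gives the same count via \eqref{eq-eqPicNumber}. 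The base case $k=0$ is $(\PP^1)^n\git G=\Mzez$. Then Lemma \ref{blowupGIT} together with Lemma \ref{lem-specialcasebl}(1) and Corollary \ref{cor-blcomquot} show that $\bar\psi_{n-m+k}:F_{n-m+k}\git G\to F_{n-m+k-1}\git G$ is the blow-up along the transversal union $\bigcup_{|S|=m-k+1}\Sigma^S_{n-m+k-1}\git G$, and identifying these centers (using Remark \ref{2010rem1} and Proposition \ref{reduction}) with Hassett's reduction morphism $\varphi_{n\cdot\epsilon_k,n\cdot\epsilon_{k-1}}$ finishes (i) and (ii).

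For the even case, $F_0^{ss}\ne F_0^s$: the strictly semistable locus consists of orbits where exactly $m$ points collide with the other $m$, and $(\PP^1)^n\git G$ has $\tfrac12\binom{n}{m}$ singular points. For $k\ge1$, the locus $Y_{n-m+k}$ of closed orbits in $F_{n-m+k}^{ss}-F_{n-m+k}^s$ is a disjoint union of transversal intersections $\Sigma^S_{n-m+k}\cap\Sigma^{S^c}_{n-m+k}$ of codimension $2$, with stabilizer $\CC^*$ acting on the normal space with weights $2,-2$; by Luna's slice theorem $F_{n-m+k}\git G$ is already smooth along $Y_{n-m+k}\git G$, and blowing up $Y_{n-m+k}$ upstairs produces $\tilde F_{n-m+k}$ with $\tilde F^{ss}=\tilde F^s$ whose quotient, by Lemma \ref{lem-specialcasebl}(2) and the fact that it is a blow-up along a smooth divisor, is isomorphic to $F_{n-m+k}\git G$. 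Thus I may replace $F_{n-m+k}^{ss}$ by $\tilde F_{n-m+k}^s$, on which $\bar G$ acts freely, and rerun the odd-case argument verbatim; the remaining point is that $F_1\git G=\Mzeo$ is Kirwan's partial desingularization of $(\PP^1)^n\git G$, which follows because the singular points are blown up by the first step of Kirwan's algorithm and the resulting quotient is smooth since all stabilizers have become $\{\pm1\}$. The main obstacle, and the place requiring the most care, is the even-$n$ analysis near the strictly semistable locus: verifying that the weights on the normal bundle are exactly $\pm2$, that Lemma \ref{lem-specialcasebl}(2) applies so that the quotient blow-up is along a \emph{reduced} ideal, and that after passing to $\tilde F$ the centers of the subsequent blow-ups remain transversal unions of smooth equidimensional subvarieties disjoint from the (now resolved) exceptional locus — these are exactly the subtleties that Hu–Keel's typical-linearization argument does not see, and they must be checked by hand using Luna slices and the explicit description of the $\Sigma^S$.
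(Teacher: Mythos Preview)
Your overall strategy matches the paper's: take the GIT quotient of the sequence \eqref{eq4-1}, produce the forgetful morphism $\phi_k:F_{n-m+k}\git G\to\Mzek$, and prove it is an isomorphism by matching Picard numbers, while Corollary \ref{cor-blcomquot} identifies the induced maps as blow-ups along transversal centers. The odd case is essentially the paper's argument (one small slip: the components $\Sigma^S_{n-m+k}\git G$ of the blow-up center are \emph{transversal}, not disjoint, but the Picard count still goes through via Proposition \ref{prop-bltrcen}).

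There is, however, a genuine gap in the even case. You assert that on $\tilde F^s_{n-m+k}$ the group $\bar G=PGL(2)$ acts freely and that after Kirwan's blow-up ``all stabilizers have become $\{\pm1\}$'', so that the odd-case argument can be rerun verbatim. This is false: points on the exceptional divisor of $\tilde F_{n-m+k}\to F^{ss}_{n-m+k}$ have stabilizer $\ZZ_4=\{\pm1,\pm i\}$ in $G$, not $\ZZ_2$. Concretely, over a point of $\Sigma^{S,S^c}_{n-m+k}$ the parameterized component carries two nodes and no markings, and the element of order $4$ in $G$ that swaps the two ends fixes points of the exceptional fiber. The quotient $\tilde F^s_{n-m+k}/G$ is still smooth, but for a different reason: the residual $\ZZ_2\subset\bar G$ acts as a pseudo-reflection along the smooth exceptional divisor, which is precisely the situation of Lemma \ref{lem-specialcasebl}(3), not case (1). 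Consequently, to conclude that $\bar\psi_{n-m+k}$ is a blow-up along a transversal center you must verify that the proper transforms $\tilde\Sigma^{S'}_{n-m+k-1}$ are transversal to this divisor $W$ and invoke the $\ZZ_4$-case of Corollary \ref{cor-blcomquot}; the odd-case argument does not apply verbatim. (Also, ``$F_1\git G=\Mzeo$'' should read ``$F_{m+1}\git G\cong\Mzeo$''; this identification is obtained from $\tilde F^s_{m+1}=\tilde F^s_m$, which holds because the proper transforms of $\Sigma^S_m$ for $|S|=m$ become \emph{unstable} in $\tilde F_m$, not because they are disjoint from the exceptional locus.)
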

\begin{remark}
(1) When $n$ is even, $\Mzez$ is not defined because the sum of
weights does not exceed $2$.

(2) When $n$ is even, $\Mzeo$ is Kirwan's (partial)
desingularization of the GIT quotient $(\PP^1)^n\git G$ with
respect to the symmetric linearization $L_0=\cO(1,\cdots,1)$.
\end{remark}

Let $F_k^{ss}$ (resp. $F_k^s$) denote the semistable (resp. stable)
part of $F_k$. By \eqref{eq-StabBlowup}, we have
\begin{equation}\label{eq4-2}
\psi_k(F_k^{ss})\subset F_{k-1}^{ss},\qquad
\psi_k^{-1}(F_{k-1}^s)\subset F_k^s.
\end{equation}
Also recall from \cite{K1} that $x=(x_1,\cdots,x_n)\in (\PP^1)^n$
is semistable (resp. stable) if $> \frac{n}2$ (resp. $\ge
\frac{n}2$) of $x_i$'s are not allowed to coincide. In particular,
when $n$ is odd, $\psi_k^{-1}(F_{k-1}^s)=F_k^s=F_k^{ss}$ for all
$k$ and
\begin{equation}\label{eq4-3}
F_{m+1}^s=F_{m}^s=\cdots =F_0^s,
\end{equation}
because the blow-up centers lie in the unstable part. Therefore we
have
\begin{equation}\label{eq4-4}
F_{m+1}\git G = \cdots =F_0\git G= (\PP^1)^n\git G.
\end{equation}
When $n$ is even, $\psi_k$ induces a morphism $F_k^{ss}\to
F_{k-1}^{ss}$ and we have
\begin{equation}\label{eq4-5}
F_m^{ss}=F_{m-1}^{ss}=\cdots =F_0^{ss} \quad \text{and}\quad F_m\git
G=\cdots =F_0\git G=(\PP^1)^n\git G.
\end{equation}

Let us consider the case where $n$ is odd first. By forgetting the
parameterization of the parameterized component of each member of
family $(\Gamma_{m+k+1}\to F_{m+k+1},\sigma_{m+k+1}^i)$, we get a
rational map $F_{m+k+1}\dashrightarrow \Mzek$ for $k=0,1,\cdots,
m-2$. By the definition of the stability in \S\ref{sec2.1}, a
fiber over $\xi\in F_{m+k+1}$ is not stable with respect to
$n\cdot \epsilon_k=(\epsilon_k,\cdots,\epsilon_k)$ if and only if,
in each irreducible component of the curve, the number $a$ of
nodes and the number $b$ of marked points satisfy
$b\epsilon_k+a\le 2$. Obviously this cannot happen on the (GIT)
stable part $F_{m+k+1}^s$. Therefore we obtain a morphism
$F_{m+k+1}^s\to \Mzek$. By construction this morphism is
$G$-invariant and thus induces a morphism
$$\phi_k:F_{m+k+1}\git G\to \Mzek.$$ Since the stabilizer groups
in $G$ of points in $F_0^s$ are all $\{ \pm 1\}$, the quotient
$$\bar{\psi}_{m+k+1}:F_{m+k+1}\git G\to F_{m+k}\git G$$ of
$\psi_{m+k+1}$ is also a blow-up along a center which consists of
transversal smooth varieties by Corollary \ref{cor-blcomquot}.

Since the blow-up center has codimension $\ge 2$, the Picard
number increases by $\binom{n}{m-k+1}$ for $k=1, \cdots, m-2$.
Since the character group of $SL(2)$ has no free part, by the
descent result in \cite{DN}, the Picard number of $F_{m+1}\git
G=F_0^s/G$ is the same as the Picard number of $F_0^s$ which
equals the Picard number of $F_0$. Therefore $\rho(F_{m+1}\git
G)=n$ and the Picard number of $F_{m+k+1}\git G$ is
\[
n+\sum_{i=1}^{k}\binom{n}{m-i+1}
\]
which equals the Picard number of $\Mzek$ by Lemma
\ref{lem-PicNumMze}. Since $\Mzek$ and $F_{m+k+1}\git G$ are
smooth and their Picard numbers coincide, we conclude that
$\phi_k$ is an isomorphism as we desired. So we proved Theorem
\ref{thm4-1} for odd $n$.

Now let us suppose $n$ is even. For ease of understanding, we
divide our proof into several steps.

\bigskip
\noindent \underline{Step 1:} For $k\ge 1$, $F_{m+k}\git G$ are
nonsingular and isomorphic to the partial desingularizations
$\tilde{F}_{m+k}\git G$.

\bigskip
The GIT quotients $F_{m+k}\git G$ may be singular because there
are $\CC^*$-fixed points in the semistable part $F_{m+k}^{ss}$. So
we use Kirwan's partial desingularization of the GIT quotients
$F_{m+k}\git G$ (\S\ref{sec2.2}). The following lemma says that
the partial desingularization process has no effect on the
quotient $F_{m+k}\git G$ for $k\ge 1$.

\begin{lemma}\label{lem4-3}
Let $F$ be a smooth projective variety with linearized $G=SL(2)$
action and let $F^{ss}$ be the semistable part. Fix a maximal
torus $\CC^*$ in $G$. Let $Z$ be the set of $\CC^*$-fixed points
in $F^{ss}$. Suppose the stabilizers of all points in the stable
part $F^{s}$ are $\{\pm 1\}$ and $Y=GZ$ is the union of all closed
orbits in $F^{ss}-F^s$. Suppose that the stabilizers of points in
$Z$ are precisely $\CC^*$. Suppose further that $Y=GZ$ is of
codimension $2$. Let $\tilde{F}\to F^{ss}$ be the blow-up of
$F^{ss}$ along $Y$ and let $\tilde{F}^s$ be the stable part in
$\tilde{F}$ with respect to a linearization as in \S\ref{sec2.2}.
Finally suppose that for each $y\in Z$, the weights of the $\CC^*$
action on the normal space to $Y$ is $\pm l$ for some $l>0$. Then
$\tilde{F}\git G=\tilde{F}^s/G\cong F\git G$ and $F\git G$ is
nonsingular.
\end{lemma}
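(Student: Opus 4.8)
The plan is: (1) show that a single blow-up already produces $\tilde F^{ss}=\tilde F^s$, so that $\tilde F\git G=\tilde F^s/G$; (2) invoke the fact that GIT quotient commutes with blow-up to realize $\tilde F\git G$ as a blow-up of $F\git G$ along $Y\git G$; (3) use Luna's slice theorem to see that $Y\git G$ is a smooth Cartier divisor in the (nonsingular) variety $F\git G$, so that this blow-up is an isomorphism.

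\emph{Step 1: $\tilde F^{ss}=\tilde F^s$.} Up to conjugacy the only connected reductive subgroups of $G=SL(2)$ are $\{1\}$, the chosen maximal torus $\CC^*$, and $G$ itself. Under the hypotheses there is no $SL(2)$-fixed point in $F^{ss}$: such a point would be a closed orbit in $F^{ss}-F^s$, hence would lie in $Y=GZ$ and be $G$-conjugate to a point of $Z$, whose stabilizer is $\CC^*\neq G$. Thus in Kirwan's procedure (\S\ref{sec2.2}) the first --- and, we claim, only --- subgroup encountered is $R=\CC^*$, with centre $GZ^{ss}_{\CC^*}=GZ=Y$, which is smooth by Kirwan's Step~1. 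Since $\codim Y=2$ and the $\CC^*$-weights on the normal space to $Y$ at $y\in Z$ are $\pm l$, this normal space is $\CC_l\oplus\CC_{-l}$, so the fibre of the exceptional divisor $E$ over $y$ is $\PP(\CC_l\oplus\CC_{-l})\cong\PP^1$ with $\CC^*$ acting with exactly two fixed points. With respect to the linearization $L_d=\pi^*L^d\otimes\cO(-E)$ (for $d\gg0$, as in \S\ref{sec2.2}) the weight of $\CC^*$ on $L_d$ at each of these fixed points is nonzero (it equals $\pm l$), so both are unstable; and over $F^{ss}-Y$ nothing changes. Hence $\tilde F$ has no semistable point fixed by $\CC^*$ or by $SL(2)$, so Kirwan's theorem yields $\tilde F^{ss}=\tilde F^s$, and therefore $\tilde F\git G=\tilde F^s/G$.

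\emph{Steps 2 and 3: $\tilde F\git G\cong F\git G$, nonsingular.} Since the orbits in $Y=GZ$ are closed in $F^{ss}$ (the stabilizers being maximal), $Y^{ss}=Y$ is smooth, so Lemma~\ref{blowupGIT} applies: for $d\gg0$, $\tilde F\git G$ is the blow-up of $F\git G$ along $Y\git G$, with centre ideal $(\cI^m)_G$ for some $m$. Apply Luna's \'etale slice theorem near a point $y\in Z$: \'etale-locally $F^{ss}\cong G\times_{\CC^*}S$ with $S=S^0\oplus\CC_l\oplus\CC_{-l}$ the normal space to the orbit (the codimension together with the weights forcing the $\CC_{\pm l}$ summands to be one-dimensional), where $S^0=S\cap Y$. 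Writing $u,v$ for coordinates on the $\CC_l,\CC_{-l}$ summands, one has $\CC[S]^{\CC^*}=\CC[S^0][uv]$, so $F\git G$ is \'etale-locally $S^0\times\mathbb{A}^1$ (coordinate $uv$) --- in particular smooth --- and $Y\git G$ is \'etale-locally the smooth Cartier divisor $\{uv=0\}$. Off $Y\git G$, $F\git G$ equals $F^s/\bar G$ with $\bar G=PGL(2)$ acting freely on the smooth variety $F^s$ (the stabilizers being $\{\pm1\}$), hence smooth there too; so $F\git G$ is nonsingular and $Y\git G\subset F\git G$ is a smooth divisor. Finally, the centre ideal $(\cI^m)_G$ is, \'etale-locally, the principal ideal $((uv)^{\lceil m/2\rceil})$, which is invertible, and blowing up an invertible ideal is an isomorphism; hence $\tilde F\git G\cong F\git G$. (Alternatively one may quote case~(2) of Lemma~\ref{lem-specialcasebl} directly, since we are precisely in that situation.) Combining the three steps gives $\tilde F\git G=\tilde F^s/G\cong F\git G$ with $F\git G$ nonsingular.

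The main obstacle is Step~1: one must extract from the \emph{balanced} weight hypothesis, together with the choice of twisted linearization $L_d$, the statement that after a single blow-up no semistable point lies over $Y$, i.e. that Kirwan's algorithm terminates here in one step. Once that is secured, Steps~2 and 3 are formal consequences of Lemma~\ref{blowupGIT}, Luna's slice theorem, and the elementary fact that blowing up an invertible ideal changes nothing.
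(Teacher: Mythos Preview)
Your proof is correct and follows essentially the same route as the paper's: both use Luna's slice theorem to show that $F\git G$ is smooth and that $Y\git G$ is a smooth Cartier divisor (via the local model $S^0\times\mathbb{A}^1$ with coordinate $uv$, equivalently $\CC^2\git\CC^*$ with weights $\pm l$), and then conclude that blowing up along an invertible ideal is an isomorphism. The one place you go further than the paper is your Step~1: you verify explicitly, via the Hilbert--Mumford weights on $L_d$ at the two $\CC^*$-fixed points of each exceptional $\PP^1$, that Kirwan's procedure terminates after a single blow-up; the paper's proof of the lemma leaves this implicit and only cites \cite[\S6]{Kirwan} for it in the subsequent application.
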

\begin{proof}
Since $\bar G=G/\{\pm 1\}$ acts freely on $F^s$, $F^s/G$ is
smooth. By assumption, $Y$ is the union of all closed orbits in
$F^{ss}-F^s$ and hence $F\git G-F^s/G=Y/G$. By Lemma
\ref{lem-specialcasebl} (2), $\tilde{F}^s/G$ is the blow-up of
$F\git G$ along the reduced ideal of $Y/G$. By our assumption, $Z$
is of codimension $4$ and
$$Y/G=GZ/G\cong G\times _{N^{\CC^*}}Z/G\cong Z/\ZZ_2$$
where $N^{\CC^*}$ is the normalizer of $\CC^*$ in $G$. Since the
dimension of $F\git G$ is $\dim F-3$, the blow-up center $Y/G$ is
nonsingular of codimension $1$. By Luna's slice theorem
(\cite[Appendix 1.D]{MFK}), the singularity of $F\git G$ at any
point $[Gy]\in Y/G$ is $\CC^2\git \CC^*$ where the weights are
$\pm l$. Obviously this is smooth and hence $F\git G$ is smooth
along $Y/G$. Since the blow-up center is a smooth divisor, the
blow-up map $\tilde{F}^s/G\to F\git G$ has to be an isomorphism.
\end{proof}

Let $Z_{m+k}$ be the $\CC^*$-fixed locus in $F_{m+k}^{ss}$ and let
$Y_{m+k}=GZ_{m+k}$. Then $Y_{m+k}$ is the disjoint union of
$$\Sigma_{m+k}^{S,S^c}:=\Sigma_{m+k}^S\cap \Sigma_{m+k}^{S^c}\cap
F_{m+k}^{ss} \quad \text{for }|S|=m, S^c=\{1,\cdots,n\}-S $$ which
are nonsingular of codimension $2$ for $k\ge 1$ by Lemma
\ref{lem3-1}. For a point $$(C,p_1,\cdots,p_n,f:C\to\PP^1)\in
\Sigma^{S,S^c}_{m+k},$$ the parameterized component of $C$ (i.e.
the unique component which is not contracted by $f$) has two nodes
and no marked points. The normal space $\CC^2$ to
$\Sigma^{S,S^c}_{m+k}$ is given by the smoothing deformations of
the two nodes and hence the stabilizer $\CC^*$ acts with weights
$2$ and $-2$.

The blow-up $\tilde{F}_{m+k}$ of $F_{m+k}^{ss}$ along $Y_{m+k}$
has no strictly semistable points by \cite[\S6]{Kirwan}. In fact,
the unstable locus in $\tilde{F}_{m+k}$ is the proper transform of
$\Sigma^S_{m+k}\cup \Sigma^{S^c}_{m+k}$ and the stabilizers of
points in $\tilde{F}^s_{m+k}$ are either $\ZZ_2=\{\pm 1\}$ (for
points not in the exceptional divisor of $\tilde{F}^s_{m+k}\to
F^{ss}_{m+k}$) or $\ZZ_4=\{\pm 1,\pm i\}$ (for points in the
exceptional divisor). Therefore, by Lemma \ref{lem4-3} and Lemma
\ref{lem-specialcasebl} (3), we have isomorphisms
\begin{equation}\label{eq4-10}
\tilde{F}_{m+k}^s/G\cong F_{m+k}\git G \end{equation} and
$F_{m+k}\git G$ are nonsingular for $k\ge 1$.

\bigskip
\noindent \underline{Step 2:} The partial desingularization
$\tilde{F}_m\git G$ is a nonsingular variety obtained by blowing
up the $\frac12\binom{n}{m}$ singular points of $F_m\git
G=(\PP^1)^n\git G$.

\bigskip
Note that $Y_m$ in $F_m^{ss}$ is the disjoint union of
$\frac12\binom{n}{m}$ orbits $\Sigma_m^{S,S^c}$ for $|S|=m$. By
Lemma \ref{lem-specialcasebl} (2), the morphism
$\tilde{F}_m^s/G\to F_m\git G$ is the blow-up at the
$\frac12\binom{n}{m}$ points given by the orbits of the blow-up
center. A point in $\Sigma^{S,S^c}_{m}$ is represented by
$(\PP^1,p_1,\cdots,p_n,\mathrm{id})$ with $p_i=p_j$ if $i,j\in S$
or $i,j\in S^c$. Without loss of generality, we may let
$S=\{1,\cdots, m\}$. The normal space to an orbit
$\Sigma^{S,S^c}_{m}$ is given by
\[
(T_{p_1}\PP^1)^{m-1}\times
(T_{p_{m+1}}\PP^1)^{m-1}=\CC^{m-1}\times\CC^{m-1}
\]
and $\CC^*$ acts with weights $2$ and $-2$ respectively on the two
factors. By Luna's slice theorem, \'etale locally near
$\Sigma^{S,S^c}_{m}$, $F_m^{ss}$ is
$G\times_{\CC^*}(\CC^{m-1}\times\CC^{m-1})$ and $\tilde{F}_m$ is
$G\times_{\CC^*}\mathrm{bl}_{0}(\CC^{m-1}\times\CC^{m-1})$ while
$\tilde{F}_m^s$ is
$G\times_{\CC^*}\left[\mathrm{bl}_{0}(\CC^{m-1}\times\CC^{m-1})
-\mathrm{bl}_{0}\CC^{m-1}\sqcup \mathrm{bl}_{0}\CC^{m-1}\right]$.
By an explicit local calculation, the stabilizers of points on the
exceptional divisor of $\widetilde{F}_m$ are $\ZZ_4=\{\pm 1,\pm
i\}$ and the stabilizers of points over $F_m^s$ are $\ZZ_2=\{\pm
1\}$. Since the locus of nontrivial stabilizers for the action of
$\bar G$ on $\tilde{F}^s_m$ is a smooth divisor with stabilizer
$\ZZ_2$, $\tilde{F}_m\git G=\tilde{F}^s_m/G$ is smooth and hence
$\tilde{F}_m^s/G$ is the desingularization of $F_m\git G$ obtained
by blowing up its $\frac12\binom{n}{m}$ singular points.

\bigskip
\noindent \underline{Step 3:} The morphism
$\bar\psi_{m+k+1}:F_{m+k+1}\git G\to F_{m+k}\git G$ is the blow-up
along the union of transversal smooth subvarieties for $k\ge 1$.
For $k=0$, we have $\tilde{F}^s_{m+1}=\tilde{F}^s_m$ and thus
$$F_{m+1}\git G\cong
\tilde{F}^s_{m+1}/G=\tilde{F}^s_m/G=\tilde{F}_m\git G$$ is the
blow-up along its $\frac12\binom{n}{m}$ singular points.

\bigskip
From Lemma \ref{lem3-1}, we know $\Sigma^S_{m+k}$ for $|S|\ge m-k$
are transversal in $F_{m+k}$. In particular,
$$\bigcup_{|S|=m}\Sigma_{m+k}^S\cap \Sigma_{m+k}^{S^c}$$ intersects transversely with the
blow-up center $$\bigcup_{|S'|=m-k} \Sigma^{S'}_{m+k}$$ for
$\psi_{m+k+1}:F_{m+k+1}\to F_{m+k}$. Hence, by Proposition
\ref{prop-bltrcen} we have a commutative diagram
\begin{equation}\label{eq4-12}
\xymatrix{ \tilde{F}_{m+k+1}\ar[r]\ar[d] & \tilde{F}_{m+k}\ar[d]\\
F_{m+k+1}^{ss}\ar[r] & F_{m+k}^{ss} }
\end{equation}
for $k\ge 1$ where the top horizontal arrow is the blow-up along
the proper transforms $\tilde{\Sigma}^{S'}_{m+k}$ of
$\Sigma^{S'}_{m+k}$, $|S'|=m-k$. By Corollary \ref{cor-blcomquot},
we deduce that for $k\ge 1$, $\bar\psi_{m+k+1}$ is the blow-up
along the transversal union of smooth subvarieties
$\tilde{\Sigma}^{S'}_{m+k}\git G\cong \Sigma^{S'}_{m+k}\git G$.

For $k=0$, the morphism $\tilde{F}_{m+1}\to \tilde{F}_m$ is the
blow-up along the proper transforms of $\Sigma_m^S$ and
$\Sigma_m^{S^c}$ for $|S|=m$. But these are unstable in
$\tilde{F}_m$ and hence the morphism $\tilde{F}_{m+1}^s\to
\tilde{F}_m^s$ on the stable part is the identity map. So we
obtain $\tilde{F}_{m+1}^s= \tilde{F}_m^s$ and
$\tilde{F}_{m+1}^s/G\cong \tilde{F}_m^s/G$.

\bigskip
\noindent \underline{Step 4:} Calculation of Picard numbers.

\bigskip
The Picard number of $F_m^{ss}=F_0^{ss}\subset F_0=(\PP^1)^n$ is
$n$ and so the Picard number of $\tilde{F}_m$ is
$n+\frac12\binom{n}{m}$. By the descent lemma of \cite{DN} as in
the odd degree case, the Picard number of
$$F_{m+1}\git G\cong \tilde{F}_{m+1}^s/G=\tilde{F}_m^s/G$$ equals the
Picard number $n+\frac12\binom{n}{m}$ of $\tilde{F}_m^s$. Since
the blow-up center of $\tilde{F}_{m+k}\git G\to
\tilde{F}_{m+k-1}\git G$ has $\binom{n}{m-k+1}$ irreducible
components, the Picard number of $\tilde{F}_{m+k}\git G\cong
F_{m+k}\git G$ is
\begin{equation}\label{eq4-13}
n+\frac12\binom{n}{m}+\sum_{i=2}^{k}\binom{n}{m-i+1}
\end{equation}
for $k\ge 2$.

\bigskip
\noindent \underline{Step 5:} Completion of the proof.

\bigskip
As in the odd degree case, for $k\ge 1$ the universal
family $\pi_k:\Gamma_{m+k}\to F_{m+k}$ gives rise to a family of
pointed curves by considering the linear system
$K_{\pi_k}+\epsilon_k\sum_i\sigma^i_{m+k}$. Over the semistable
part $F_{m+k}^{ss}$ it is straightforward to check that this gives
us a family of $n\cdot \epsilon_k$-stable pointed curves.
Therefore we obtain an invariant morphism $$F_{m+k}^{ss}\to
\Mzek$$ which induces a morphism $$F_{m+k}\git G\to \Mzek.$$ By
Lemma \ref{lem-PicNumMze}, the Picard number of $\Mzek$ coincides
with that of $F_{m+k}\git G$ given in \eqref{eq4-13}. Hence the
morphism $F_{m+k}\git G\to \Mzek$ is an isomorphism as desired.
This completes our proof of Theorem \ref{thm4-1}.

\begin{remark}\label{2010rem1}
Let $S \subset \{1, 2, \cdots, n\}$ with $|S| = m-k$. On
$\overline{M}_{0,n\cdot \epsilon_{k}}$, the blow-up center for
$\overline{M}_{0,n\cdot \epsilon_{k+1}}\to\Mzek$ is the union of
$n \choose {m-k}$ smooth subvarieties $\Sigma_{n-m+k}^S\git G$.
Each $\Sigma_{n-m+k}^S \git G$ parameterizes weighted pointed
stable curves with $m-k$ colliding marked points $s_{i_1},
s_{i_2}, \cdots, s_{i_{m-k}}$ for $i_j \in S$. On the other hand,
for any member of $\Mzek$, no $m-k+1$ marked points can collide.
So we can replace $m-k$ marked points $s_{i_j}$ with $i_j \in S$
by a single marked point which cannot collide with any other
marked points. Therefore, an irreducible component
$\Sigma_{n-m+k}^S \git G$ of the blow-up center is isomorphic to
the moduli space of weighted pointed rational curves
$\overline{M}_{0, (1, \epsilon_k, \cdots, \epsilon_k)}$ with
$n-m+k+1$ marked points as discovered by Hassett. (See Proposition
\ref{reduction}.)
\end{remark}

\begin{remark}
For the moduli space of \emph{unordered} weighted pointed stable
curves $\Mzek/S_n$, we can simply take quotients by the $S_n$
action of the blow-up process in Theorem \ref{thm4-1}. In
particular, $\Mzn/S_n$ is obtained by a sequence of weighted
blow-ups from $\left((\PP^1)^n\git G\right)/ S_n=\PP^n\git G.$
\end{remark}


\section{Log canonical models of $\Mzn$}\label{sec6}

In this section, we give a relatively elementary and
straightforward proof of the following theorem of M. Simpson by
using Theorem \ref{thm4-1}. Let $M_{0,n}$ be the moduli space of
$n$ \emph{distinct} points in $\PP^1$ up to $\mathrm{Aut}(\PP^1)$.

\begin{theorem} \label{thm6.1}\emph{(M. Simpson \cite{Simpson})}
Let $\alpha$ be a rational number satisfying $\frac{2}{n-1} <
\alpha \le 1$ and let $D=\Mzn-M_{0,n}$ denote the boundary
divisor. Then the log canonical model
\[
    \Mzn(\alpha) =
    \proj\left(\bigoplus_{l \ge 0} H^0(\Mzn, \cO(\lfloor l (K_{\Mzn}
    +\alpha D) \rfloor))\right)
\]
satisfies the following:
\begin{enumerate}
\item If $\frac{2}{m-k+2} < \alpha \le \frac{2}{m-k+1}$ for $1\le k\le
m-2$, then $\Mzn(\alpha) \cong \Mzek$.\item  If $\frac{2}{n-1} <
\alpha \le \frac{2}{m+1}$, then $\Mzn(\alpha) \cong (\PP^1)^n \git
G$ where the quotient is taken with respect to the symmetric
linearization $\cO(1,\cdots,1)$.
\end{enumerate}
\end{theorem}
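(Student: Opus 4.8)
The plan is to deduce Theorem \ref{thm6.1} from Theorem \ref{thm4-1} together with the known structure of the reduction morphisms $\varphi_{n\cdot\epsilon_k,\,n\cdot\epsilon_{k-1}}$ of \S\ref{sec2.1}. By \cite[Corollary 3.5]{Simpson}, the log canonical model $\Mzn(\alpha)$ is identified once we know that a suitable line bundle on the candidate target space is ample; concretely, for $\frac{2}{m-k+2}<\alpha\le\frac{2}{m-k+1}$ it suffices to prove that $K_{\Mzek}+\alpha D_k$ is ample, where $D_k=\Mzek-M_{0,n}$, and for $\frac{2}{n-1}<\alpha\le\frac{2}{m+1}$ it suffices to prove ampleness of the corresponding class on $(\PP^1)^n\git G$. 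I would first isolate this reduction as a separate statement (call it Proposition \ref{prop-amplerange}), so that the whole theorem becomes a single ampleness assertion on each $\Mzek$.

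\emph{Key steps.} First, handle the base case $k=0$: on the GIT quotient $(\PP^1)^n\git G$ the descended polarization $\cO(1,\dots,1)$ is ample by construction, and an explicit computation of $K$ and $D$ in terms of this polarization (using the identification $(\PP^1)^n\git G=\Mzn(\alpha)$ for $\alpha$ in the first range, which one checks directly from Kapranov-type generators of the Picard group) shows $K+\alpha D$ is a positive multiple of the ample generator for $\frac{2}{n-1}<\alpha\le\frac{2}{m+1}$; this settles part (2). Second, run an induction on $k$ using that, by Theorem \ref{thm4-1}, $\varphi_k:=\bar\psi_{n-m+k}:\Mzek\to\Mzeko$ is the blow-up along a transversal union of smooth subvarieties with exceptional divisor the boundary divisor $D_k^{m-k+1}$ (the component of $D_k$ whose generic point has a bubble carrying exactly $m-k+1$ marked points). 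Then for small $\delta>0$ the class $\varphi_k^*(K_{\Mzeko}+\alpha D_{k-1})-\delta D_k^{m-k+1}$ is ample whenever $K_{\Mzeko}+\alpha D_{k-1}$ is ample; a direct intersection-number bookkeeping (comparing $\varphi_k^*K_{\Mzeko}$ with $K_{\Mzek}$ via the discrepancy formula, and $\varphi_k^*D_{k-1}$ with $D_k$ via Proposition \ref{reduction}) shows that this ample class equals $K_{\Mzek}+\alpha' D_k$ with $\alpha'=\tfrac{2}{m-k+2}+t$ for any sufficiently small $t>0$. Third, close the gap to the right endpoint: by the intersection-number calculations of Alexeev--Swinarski \cite[\S3]{AlexSwin}, $K_{\Mzek}+\alpha D_k$ is nef for $\alpha=\tfrac{2}{m-k+1}+s$ with $s>0$ small; since a positive combination of an ample and a nef divisor is ample, combining the endpoint-$t$ ample class with this nef class gives ampleness of $K_{\Mzek}+\alpha D_k$ for the entire half-open interval $\frac{2}{m-k+2}<\alpha\le\frac{2}{m-k+1}$. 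This yields part (1).

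\emph{Main obstacle.} The delicate point is the bookkeeping in the inductive step: one must express $K_{\Mzek}$ and $D_k$ in terms of $\varphi_k^*K_{\Mzeko}$, $\varphi_k^*D_{k-1}$ and the exceptional divisor $D_k^{m-k+1}$ with the \emph{exact} coefficients, so that the "obviously ample" class $\varphi_k^*(K_{\Mzeko}+\alpha D_{k-1})-\delta D_k^{m-k+1}$ is recognized as $K_{\Mzek}+\alpha' D_k$ for $\alpha'$ in the asserted window rather than merely some nearby value. This requires the discrepancy $a(D_k^{m-k+1},\Mzeko,\cdot)$ for a smooth blow-up along a codimension-$(m-k)$ center (which follows from Theorem \ref{thm4-1}(i): the blow-up center $\bigcup\Sigma^{S'}\git G$ has codimension $m-k$, being a transversal union of smooth subvarieties isomorphic to $\overline M_{0,(1,\epsilon_k,\dots,\epsilon_k)}$), together with Hassett's formula (Proposition \ref{reduction}) for how the boundary pulls back. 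Once these coefficients are pinned down the argument is purely formal; but getting the arithmetic with $m$, $k$ and the two weight regimes to line up is where the real work lies.
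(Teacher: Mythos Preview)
Your proposal is correct and follows essentially the same approach as the paper: reduce to Proposition \ref{prop-amplerange} via \cite[Corollary 3.5]{Simpson}, establish the base case $k=0$ by direct computation on $(\PP^1)^n\git G$, run the induction using that $\varphi_k$ is a blow-up so that $\varphi_k^*(K_{\Mzeko}+\alpha D_{k-1})-\delta D_k^{m-k+1}$ is ample, identify this with $K_{\Mzek}+\alpha' D_k$ for $\alpha'$ just above $\tfrac{2}{m-k+2}$, and close the interval using the Alexeev--Swinarski nefness at $\tfrac{2}{m-k+1}+s$ together with the ample-plus-nef criterion. The paper packages the pull-back bookkeeping as Lemmas \ref{computepushandpull} and \ref{canonicaldiv2} rather than via discrepancies, but the content is the same.
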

\begin{remark} Keel and McKernan prove (\cite[Lemma 3.6]{KeelMcKer}) that
$K_{\Mzn} + D$ is ample. Because $$\Mzemt\cong\Mzemo= \Mzn$$ by
definition, we find that (1) above holds for $k=m-1$ as
well.\end{remark}

For notational convenience, we denote $(\PP^1)^n\git G$ by $\Mzez$
for even $n$ as well. Let $\Sigma_k^S$ denote the subvarieties of
$F_k$ defined in \S\ref{sec3} for $S\subset \{1,\cdots,n\}$,
$|S|\le m$. Let
$$D_k^S=\Sigma_{n-m+k}^S\git G\subset F_{n-m+k}\git G\cong \Mzek.$$
Then $D_k^S$ is a divisor of $\Mzek$ for $|S|=2$ or $m-k<|S|\le
m$. Let $D_k^j=(\cup_{|S|=j}\Sigma_{n-m+k}^S)\git G$ and
$D_k=D_k^2+\sum_{j>m-k}D^j_k$. Then $D_k$ is the boundary divisor
of $\Mzek$, i.e. $\Mzek-M_{0,n}=D_k$. When $k = m-2$ so $\Mzek \cong
\Mzn$, sometimes we will drop the subscript $k$. Note that if $n$ is even and
$|S|=m$, $D_k^S=D_k^{S^c}=\Sigma^{S,S^c}_{n-m+k}\git G$.

By Theorem \ref{thm4-1}, there is a sequence of blow-ups
\begin{equation}\label{eq6.1}
\Mzn\cong \Mzemt\mapright{\varphi_{m-2}}
\Mzemth\mapright{\varphi_{m-3}}\cdots
\mapright{\varphi_{2}}\Mzeo\mapright{\varphi_{1}} \Mzez
\end{equation}
whose centers are transversal unions of smooth subvarieties,
except for $\varphi_1$ when $n$ is even. Note that the irreducible
components of the blow-up center of $\varphi_k$ furthermore
intersect transversely with $D^j_{k-1}$ for $j>m-k+1$ by Lemma
\ref{lem3-1} and by taking quotients.
\begin{lemma}\label{computepushandpull} Let $1\le k\le m-2$.
\begin{enumerate}
\item $\varphi_k^* (D^j_{k-1}) = D^j_k$ for $j> m-k+1$.
\item $\varphi_k^* (D^2_{k-1}) = D^2_k + {m-k+1 \choose 2} D^{m-k+1}_k$.
\item $\varphi_{k *} (D^j_k) = D^j_{k-1}$ for $j > m-k+1$ or $j=2$.
\item $\varphi_{k *} (D^j_k) =0$ for $j=m-k+1$.\end{enumerate}
\end{lemma}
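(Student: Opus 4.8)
The plan is to reduce everything to the standard behaviour of divisors under a single smooth blow-up and then feed in the combinatorics of the sets $S$. By Theorem \ref{thm4-1} and Remark \ref{2010rem1}, the map $\varphi_k$ is the blow-up of $\Mzeko$ along the center $D_{k-1}^{m-k+1}=\bigcup_{|S|=m-k+1}\Sigma_{n-m+k-1}^S\git G$, with exceptional divisor $D_k^{m-k+1}$; for $k\ge 2$ this is a blow-up with smooth center inside a smooth variety, which by Proposition \ref{prop-bltrcen} (applied after Corollary \ref{cor-blcomquot}) decomposes into a sequence of smooth blow-ups along the $\binom{n}{m-k+1}$ components $\tilde\Sigma^S_{n-m+k-1}\git G$ in any order. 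Moreover, for $j\neq m-k+1$ the divisor $D_k^j$ is by construction the proper transform of $D_{k-1}^j$ (the $\Sigma_\bullet^{S'}$ with $|S'|\neq m-k+1$ are defined as proper transforms, and this is compatible with $\git G$ by Corollary \ref{cor-blcomquot}). So it suffices to recall that for a smooth blow-up $\mathrm{bl}_Z\colon\tilde X\to X$ with exceptional divisor $E$ one has $\mathrm{bl}_Z^*D=\tilde D+\mathrm{mult}_Z(D)\,E$, that $\mathrm{bl}_{Z*}\tilde D=D$, and that $\mathrm{bl}_{Z*}E=0$ whenever $\codim Z\ge 2$; then assemble these over the components of the center.

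The combinatorial input is the computation of $\mathrm{mult}_{\Sigma^S}(D_{k-1}^j)$ for each center component $\Sigma^S$, $|S|=m-k+1$. Here I use the containment rule $\Sigma_{k-1}^{S'}\supseteq\Sigma_{k-1}^S\iff S'\subseteq S$ together with the transversality of the family $\{\Sigma^{S'}_{k-1}\}$ provided by Lemma \ref{lem3-1}. For $j>m-k+1$ there is no $S'$ with $|S'|=j$ and $S'\subseteq S$, so no component of the center lies in $D_{k-1}^j$; hence $\varphi_k^*D_{k-1}^j$ has no exceptional part, and being the proper transform it equals $D_k^j$, which is (1). For $j=2$ (note $m-k+1\ge 3$, so $2\neq m-k+1$), exactly the $\binom{m-k+1}{2}$ divisors $\Sigma^{S'}_{k-1}$ with $S'\subseteq S$, $|S'|=2$, pass through the generic point of $\Sigma^S_{k-1}$, and by Lemma \ref{lem3-1} they meet transversally there; transversal smooth divisors through $Z$ contribute multiplicity equal to their number (local model $\{x_1\cdots x_r=0\}$), so $\mathrm{mult}_{\Sigma^S_{k-1}}(D_{k-1}^2)=\binom{m-k+1}{2}$ uniformly in $S$. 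Therefore $\varphi_k^*(D_{k-1}^2)=D_k^2+\binom{m-k+1}{2}D_k^{m-k+1}$, which is (2).

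For (3) and (4): when $j\neq m-k+1$ the divisor $D_k^j$ is the proper transform of $D_{k-1}^j$, so $\varphi_{k*}(D_k^j)=D_{k-1}^j$; and $D_k^{m-k+1}$ is the exceptional divisor, whose image $D_{k-1}^{m-k+1}$ has codimension $m-k\ge 2$ in $\Mzeko$, hence $\varphi_{k*}(D_k^{m-k+1})=0$. The only case that is not literally a smooth blow-up is $k=1$ with $n$ even, where $\Mzez=(\PP^1)^n\git G$ is singular at the $\tfrac12\binom nm$ points $[\Sigma^{S,S^c}]$ and $\varphi_1$ is the blow-up of those points (Steps 1--2 of the proof of Theorem \ref{thm4-1}). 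There one still has to make sense of $\varphi_1^*D_0^2$: near such a point, in the local model $(\CC^{m-1}\times\CC^{m-1})\git\CC^*$ of Step 2 with weights $\pm2$, the divisor $D_0^2$ is cut out by the product of the linear forms defining the $\binom m2$ hyperplanes on each factor, and this product is $\CC^*$-invariant because the $\binom m2$ weight-$2$ factors and the $\binom m2$ weight-$(-2)$ factors cancel; hence $D_0^2$ is Cartier at the singular point and its multiplicity $\binom m2=\binom{m-1+1}{2}$ along the exceptional $\PP^{m-2}\times\PP^{m-2}$ is read off from the blow-up $\mathrm{bl}_0(\CC^{m-1}\times\CC^{m-1})\git\CC^*$, matching (2); (3) and (4) are immediate from the same local picture.

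The main obstacle is (2): pinning down the coefficient $\binom{m-k+1}{2}$ exactly. This needs both halves of Lemma \ref{lem3-1} — that precisely the $\binom{m-k+1}{2}$ boundary divisors indexed by the $2$-subsets of $S$ pass through $\Sigma^S_{k-1}$, and that they do so transversally (so the multiplicity is their number and not larger) — plus the bookkeeping, in the even $k=1$ case, needed to see that $D_0^2$ is $\QQ$-Cartier at the singular points so that the pullback is defined; once that invariant-function description is in hand the computation is the explicit local blow-up of Step 2. Everything else is the routine divisor dictionary for smooth blow-ups assembled componentwise.
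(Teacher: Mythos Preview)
Your argument for the generic case (i.e., $k\ge 2$, or $k=1$ with $n$ odd) is exactly the paper's: identify the center components containing a given $D_{k-1}^{S'}$, use the transversality of Lemma~\ref{lem3-1} to read off the multiplicity, and assemble. Parts (3) and (4) are dismissed in the paper as ``obvious'', just as you do.

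For $n$ even and $k=1$ you take a genuinely different route from the paper. The paper computes \emph{upstairs} on $\tilde F_m^s$: there the blow-up $\pi:\tilde F_m\to F_m^{ss}$ is smooth, so $\pi^*\Sigma_m^2=\tilde\Sigma_m^2+2\binom m2\tilde\Sigma_m^m$ with the honest coefficient $2\binom m2$; the halving to $\binom m2$ occurs only upon descent, because the $\ZZ_2$-stabilizer in $\bar G$ along the exceptional divisor acts nontrivially on the normal line, so that $\tilde\Sigma_m^m$ itself does not descend while $2\tilde\Sigma_m^m$ descends to $D_1^m$ (via the descent lemma of \cite{DN}). You instead work directly in the quotient, observing that the product of the $2\binom m2$ linear forms is $\CC^*$-invariant and hence gives a Cartier equation for $D_0^2$ at the cone point. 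This is correct, but your sentence ``its multiplicity $\binom m2$ \ldots\ is read off from the blow-up'' hides the actual computation: one must note that this invariant function, when expressed in the generators $z_{ij}=x_iy_j$ of the invariant ring, is homogeneous of degree $\binom m2$ in the $z$'s (bihomogeneity $(\binom m2,\binom m2)$ in $(x,y)$ becomes degree $\binom m2$ in the $z_{ij}$), and that on the blow-up of a cone at its vertex a degree-$d$ function acquires exactly order $d$ along the exceptional divisor. Without this step a reader might expect the coefficient $2\binom m2$ coming from the $2\binom m2$ hyperplanes. The paper's descent argument makes the factor of $2$ explicit and explains its origin; your approach is more self-contained but needs that missing sentence to be complete.
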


\begin{proof} The push-forward formulas (3) and (4) are obvious.
Recall from \S\ref{sec4} that $\varphi_k=\bar\psi_{n-m+k}$ is the
quotient of $\psi_{n-m+k}:F_{n-m+k}^{ss}\to F_{n-m+k-1}^{ss}$.
Suppose $n$ is not even or $k$ is not $1$. Since $D^S_k$ for
$|S|>2$ does not contain any component of the blow-up center,
$\varphi_k^*(D^S_{k-1}) = D^S_k$. If $|S|=2$, $D^S_{k-1}$ contains
a component $D^{S'}_{k-1}$ of the blow-up center if and only if
$S' \supset S$. Therefore we have
\[
    \varphi_k^*(D^S_{k-1}) = D^S_k +
    \sum_{S' \supset S, |S'| = m-k+1} D^{S'}_k.
\]
By adding them up for all $S$ such that $|S|=2$, we obtain (2).

When $n$ is even and $k=1$, we calculate the pull-back before
quotient. Let $\pi:\tilde{F}_{m}^s\to F_m^{ss}$ be the map
obtained by blowing up $\cup_{|S|=m}\Sigma^{S,S^c}_m$ and removing
unstable points. Recall that $\tilde{F}_m^s/G\cong F_{m+1}\git
G\cong \Mzeo$ and the quotient of $\pi$ is $\varphi_1$. Then a
direct calculation similar to the above gives us
$\pi^*\Sigma^2_m=\tilde{\Sigma}_m^2+2\binom{m}{2}\tilde{\Sigma}^m_m$
where $\Sigma^2_m=\cup_{|S|=2}\Sigma^S_m$ and $\tilde{\Sigma}_m^2$
is the proper transform of $\Sigma^2_m$ while $\tilde{\Sigma}_m^m$
denotes the exceptional divisor. Note that by the descent lemma
(\cite{DN}), the divisor $\Sigma_m^2$ and $\tilde{\Sigma}^2_m$
descend to $D^2_0$ and $D_{1}^2$. However $\tilde{\Sigma}_m^m$
does not descend because the stabilizer group $\ZZ_2$ in $\bar
G=PGL(2)$ of points in $\tilde{\Sigma}_m^m$ acts nontrivially on
the normal spaces. But by the descent lemma again,
$2\tilde{\Sigma}^m_m$ descends to $D^m_1$. Thus we obtain (2).
\end{proof}

Next we calculate the canonical divisors of $\Mzek$.
\begin{proposition}\label{canonicaldiv1} \cite[Proposition
1]{Pand} The canonical divisor of $\Mzn$ is
\[
    K_{\Mzn} \cong -\frac{2}{n-1} D^2 + \sum_{j=3}^m
    \left(-\frac{2}{n-1}{j \choose 2} +(j-2)\right) D^j.
\]
\end{proposition}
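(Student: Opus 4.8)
The plan is to read off the formula from the standard description of $\mathrm{Pic}(\Mzn)$ rather than from the blow-up construction of Theorem \ref{thm4-1}; the latter would, when $n$ is even, force us through the singular quotient $(\PP^1)^n\git G$ and its resolution, which is far more work than is warranted for a computation of $K$. The two ingredients I would use are Mumford's canonical class formula specialized to genus $0$ and Keel's boundary relations in the Chow group.

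First I would record the identity $K_{\Mzn}\cong\sum_{i=1}^n\psi_i-2\delta$, where $\psi_i$ is the $i$-th cotangent line class and $\delta=\sum_{j=2}^m D^j$ is the total boundary. In genus $0$ this is the specialization of Mumford's canonical class formula (the $\lambda$-term vanishes and $\Mzn$ carries no automorphisms), and it can also be obtained by the familiar induction on $n$ along the forgetful morphisms $\pi:\Mzn\to\overline{M}_{0,n-1}$, using $K_{\Mzn}\cong\omega_\pi\otimes\pi^*K_{\overline{M}_{0,n-1}}$ together with the relation $\psi_n\cong\omega_\pi+\sum_{i<n}D^{\{i,n\}}$ and the pullback rule $\pi^*D^T=D^T+D^{T\cup\{n\}}$, with base case $\overline{M}_{0,4}\cong\PP^1$, $K\cong -2\cdot[\mathrm{pt}]$.

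Next I would convert $\sum_i\psi_i$ into a purely boundary expression via Keel's relation $\psi_i=\sum_{i\in S,\ a,b\notin S}D^S$, valid for any fixed $a,b\ne i$. Averaging over the $\binom{n-1}{2}$ choices of $\{a,b\}$ symmetrizes the right-hand side: a boundary divisor $D^S$ with $i\in S$ is then weighted by $\binom{n-|S|}{2}\big/\binom{n-1}{2}$, so $\psi_i\cong\sum_{i\in S}\tfrac{\binom{n-|S|}{2}}{\binom{n-1}{2}}D^S$. Summing over $i$, with $|S|$ indices lying in $S$ and $n-|S|$ in $S^c$, the coefficient of a boundary divisor $D^S=D^{S^c}$ becomes $\tfrac{|S|\binom{n-|S|}{2}+(n-|S|)\binom{|S|}{2}}{\binom{n-1}{2}}$, which collapses by an elementary binomial manipulation to $\tfrac{|S|(n-|S|)}{n-1}$; hence $\sum_i\psi_i\cong\sum_S\tfrac{|S|(n-|S|)}{n-1}D^S$, the sum running over boundary divisors.

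Substituting gives $K_{\Mzn}\cong\sum_S\bigl(\tfrac{|S|(n-|S|)}{n-1}-2\bigr)D^S$; regrouping the divisors according to the size $j=\min(|S|,|S^c|)$ of the smaller block yields coefficient $\tfrac{j(n-j)}{n-1}-2$ on $D^j$, and the one-line rearrangement $-\tfrac{2}{n-1}\binom{j}{2}+(j-2)=\tfrac{jn-j^2-2n+2}{n-1}=\tfrac{j(n-j)}{n-1}-2$ identifies it with the stated coefficient (which for $j=2$ is $-\tfrac{2}{n-1}$). I expect the only genuinely delicate point — and the main obstacle — to be the bookkeeping of the $S\leftrightarrow S^c$ identification of boundary divisors, in particular in the middle case $j=m$ when $n$ is even where $|S|=|S^c|=m$: once each boundary divisor is counted exactly once in every sum above, the whole argument reduces to the two binomial identities just displayed.
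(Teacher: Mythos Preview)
Your argument is correct. The paper itself does not prove this proposition: it is quoted verbatim from \cite[Proposition~1]{Pand} and used as input to Lemma~\ref{canonicaldiv2}, so there is no ``paper's own proof'' to compare against. What you have written is the standard derivation (Mumford's formula $K=\kappa-2\delta=\sum_i\psi_i-2\delta$ in genus $0$, followed by Keel's expression of $\psi_i$ in boundary classes and symmetrization), and the algebra checks: the identity $|S|\binom{n-|S|}{2}+(n-|S|)\binom{|S|}{2}=\tfrac{(n-2)}{2}|S|(n-|S|)$ gives the coefficient $\tfrac{j(n-j)}{n-1}$ on each $D^j$ in $\sum_i\psi_i$, and your final rearrangement $-\tfrac{2}{n-1}\binom{j}{2}+(j-2)=\tfrac{j(n-j)}{n-1}-2$ is an identity in $j$ and $n$, so the middle case $j=m$ for even $n$ requires no separate treatment once each partition $\{S,S^c\}$ is counted exactly once.
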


\begin{lemma}\label{canonicaldiv2}
(1) The canonical divisor of $(\PP^1)^n \git G$ is
\[
    K_{(\PP^1)^n \git G} \cong -\frac{2}{n-1}D_0^2.
\]
(2) For $1\le k\le m-2$, the canonical divisor of $\Mzek$ is
\[
    K_{\Mzek} \cong -\frac{2}{n-1}D_k^2 + \sum_{j\ge m-k+1}^m
    \left(-\frac{2}{n-1}{j \choose 2} + (j-2) \right)D_k^j.
\]
\end{lemma}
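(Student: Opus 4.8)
The plan is a descending induction on $k$, running from $k=m-2$ down to $k=0$: the case $k=m-2$ is Pandharipande's formula for $K_{\Mzn}$, and the case $k=0$ is exactly part (1), since $(\PP^1)^n\git G=\Mzez$ in our notation. Throughout it is convenient to abbreviate $c_j=-\frac{2}{n-1}\binom{j}{2}+(j-2)$ for $2\le j\le m$ and to observe that $c_2=-\frac{2}{n-1}$; with this notation the assertion to be proved for $\Mzek$ is the single identity
\[
K_{\Mzek}\cong c_2 D_k^2+\sum_{j=m-k+1}^{m}c_j D_k^j ,
\]
where the sum is understood to be empty when $k=0$.

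For the base case $k=m-2$ I would use the isomorphism $\Mzn\cong\Mzemt$ of Theorem \ref{thm4-1}, under which the standard boundary divisor $D^j$ of $\Mzn$ corresponds to $D_{m-2}^j$ for $2\le j\le m$. Since $m-(m-2)+1=3$, the identity above for $k=m-2$ reads $K_{\Mzn}\cong c_2 D^2+\sum_{j=3}^m c_j D^j$, which is precisely Proposition \ref{canonicaldiv1}.

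The inductive step rests on the claim that $K_{\Mzeko}=\varphi_{k*}K_{\Mzek}$ for $1\le k\le m-2$. This holds because $\varphi_k : \Mzek\to\Mzeko$ is a projective birational morphism of normal varieties (a blow-up, by Theorem \ref{thm4-1}) whose exceptional divisor is $D_k^{m-k+1}$, and this divisor is contracted, i.e.\ $\varphi_{k*}D_k^{m-k+1}=0$ by Lemma \ref{computepushandpull}(4); hence $K_{\Mzek}$ and $K_{\Mzeko}$ agree over the open set where $\varphi_k$ is an isomorphism, and pushing forward recovers $K_{\Mzeko}$ on all of $\Mzeko$ by normality. Granting this, I would apply $\varphi_{k*}$ to the inductive identity and use Lemma \ref{computepushandpull}(3),(4): $\varphi_{k*}D_k^2=D_{k-1}^2$, $\varphi_{k*}D_k^{m-k+1}=0$, and $\varphi_{k*}D_k^j=D_{k-1}^j$ for $m-k+1<j\le m$. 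Since $m-k+1=m-(k-1)$, this sends the index set $\{2\}\cup\{m-k+1,\cdots,m\}$ to $\{2\}\cup\{m-(k-1)+1,\cdots,m\}$, which is exactly the identity for $k-1$. Iterating down to $k=0$ leaves only the $D_0^2$ term, giving part (1).

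The step needing the most care is the identity $K_{\Mzeko}=\varphi_{k*}K_{\Mzek}$ in the case $n$ even, $k=1$, where $\Mzez=(\PP^1)^n\git G$ is singular at the $\frac12\binom nm$ points blown up by $\varphi_1$: one should run the argument at the level of Weil divisor classes, noting that $D_1^m$ maps to this finite point set of codimension $n-3\ge 2$, so that no $\QQ$-Cartier hypothesis on $K_{\Mzez}$ is required. The remaining point, purely bookkeeping, is to confirm in the base case that Pandharipande's divisors $D^j$ are indeed identified with $D_{m-2}^j$ and that the $D^2$-coefficient $-\frac{2}{n-1}$ equals $c_2$; both are immediate but should be recorded.
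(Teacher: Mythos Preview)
Your proof is correct but runs in the opposite direction from the paper's. The paper argues by \emph{ascending} induction: it first observes that the $S_n$-invariant part of $\mathrm{Pic}((\PP^1)^n\git G)_\QQ$ is one-dimensional, so $K_{(\PP^1)^n\git G}=cD_0^2$ for some unknown $c$; then it climbs up the blow-up tower using the standard formula $K_{\tilde X}=\pi^*K_X+(\mathrm{codim}-1)E$ together with Lemma~\ref{computepushandpull}(1),(2), obtaining $K_{\Mzek}=cD_k^2+\sum_{j\ge m-k+1}(c\binom{j}{2}+(j-2))D_k^j$ (with an extra unknown $a$ in the $D^m$-coefficient when $n$ is even, since $\varphi_1$ is not a smooth blow-up); finally it matches against Proposition~\ref{canonicaldiv1} at $k=m-2$ to pin down $c=-\frac{2}{n-1}$ (and $a=m-2$).

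Your \emph{descending} induction, using $K_{\Mzeko}=\varphi_{k*}K_{\Mzek}$ and the push-forward parts (3),(4) of Lemma~\ref{computepushandpull}, is slicker: it avoids introducing the unknown constants $c$ and $a$, treats odd and even $n$ uniformly, and does not require the preliminary observation about the $S_n$-invariant Picard group. The price is that you must justify $\varphi_{k*}K_X=K_Y$ for the singular target when $n$ is even and $k=1$, which you handle correctly at the level of Weil divisors. The paper's approach, by contrast, makes the discrepancy of each $\varphi_k$ visible along the way, which is not needed here but is sometimes useful downstream.
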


\begin{proof}
It is well known by the descent lemma (\cite{DN}) that
$\mathrm{Pic}((\PP^1)^n \git G)$ is a free abelian group of rank
$n$(See \S6).  The symmetric group $S_n$ acts on $(\PP^1)^n \git G$ in the
obvious manner, and there is an induced action on its Picard
group. Certainly the canonical bundle $K_{(\PP^1)^n \git G}$ and
$D^2_0$ are $S_n$-invariant. On the other hand, the
$S_n$-invariant part of the rational Picard group is a one
dimensional vector space generated by the quotient $D_0^2$ of
$\cO_{(\PP^1)^n}(n-1,\cdots,n-1)$ and hence we have $K_{(\PP^1)^n
\git G} \cong c D_0^2$ for some $c\in \QQ$.

Suppose $n$ is odd. The contraction morphisms $\varphi_k$ are all
compositions of smooth blow-ups for $k \ge 1$. From the blow-up
formula of canonical divisors (\cite[II Exe. 8.5]{Hartshorne}) and
Lemma \ref{computepushandpull}, we deduce that
\[
    K_{\Mzek} = cD^2_k + \sum_{j\ge m-k+1}^m
    \left(c{j \choose 2} + (j-2)\right)D^j_k.
\]
Since $\Mzn \cong \Mzemt$, we get $c = -\frac{2}{n-1}$ from
Proposition \ref{canonicaldiv1}.

When $n$ is even, $\varphi_1^*(K_{(\PP^1)^n \git G}) = cD_1^2 +
c{m \choose 2} D^m_1$ by Lemma \ref{computepushandpull}. We write
$K_{\Mzeo} = cD_1^2 + (c{m \choose 2} + a)D_1^m$. By the blow-up
formula of canonical divisors (\cite[II Exe. 8.5]{Hartshorne})
again, we deduce that
\[
    K_{\Mzek} = cD_k^2 + \sum_{j \ge m-k+1}^{m-1}
    \left( c{j \choose 2} + (j-2)\right)D_k^j + (c{m \choose 2} + a)D_k^m.
\]
From Proposition \ref{canonicaldiv1} again, we get $c =
-\frac{2}{n-1}$ and $a = m-2$.
\end{proof}

We are now ready to prove Theorem \ref{thm6.1}. By \cite[Corollary
3.5]{Simpson}, the theorem is a direct consequence of the
following proposition.
\begin{proposition}\label{prop-amplerange}
(1) $K_{\Mzez}+\alpha D_0$ is ample if $\frac{2}{n-1} < \alpha \le
\frac{2}{m+1}$.\\
(2) For $1\le k\le m-2$, $K_{\Mzek}+\alpha D_k$ is ample if
$\frac{2}{m-k+2} < \alpha \le \frac{2}{m-k+1}$.

\end{proposition}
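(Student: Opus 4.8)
The plan is to carry out the argument sketched right after Theorem~\ref{thm1.2}. By \cite[Corollary~3.5]{Simpson} this Proposition is precisely what is needed to deduce Theorem~\ref{thm6.1}, and I would prove it by induction on $k$, with part (1) as the base case. The first move is a reduction to a single value of $\alpha$: since $K_{\Mzek}+\alpha D_k$ is affine-linear in $\alpha$, since the intersection-number computations of Alexeev--Swinarski \cite[\S3]{AlexSwin} give the nefness of $K_{\Mzek}+\alpha D_k$ at $\alpha=\frac{2}{m-k+1}+s$ for all sufficiently small $s>0$, and since the sum of an ample and a nef divisor is ample while the ample cone is open and convex, it suffices to show that $K_{\Mzek}+\alpha D_k$ is ample at $\alpha=\frac{2}{m-k+2}+t$ for every sufficiently small $t>0$: any $\alpha_0$ in the stated interval then lies strictly between such a point and one of the form $\frac{2}{m-k+1}+s$, hence is a positive convex combination of the two. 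The same reduction applies to (1) with $\frac{2}{n-1}$ in place of $\frac{2}{m-k+2}$.

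The base case $k=0$ is immediate from Lemma~\ref{canonicaldiv2}(1): since $D_0=D_0^2$ we get $K_{\Mzez}+\alpha D_0\cong\bigl(\alpha-\frac{2}{n-1}\bigr)D_0^2$, and $D_0^2$ is the descent of $\cO_{(\PP^1)^n}(n-1,\cdots,n-1)$, hence ample on $(\PP^1)^n\git G$ because $\cO(1,\cdots,1)$ is the GIT polarization; so $K_{\Mzez}+\alpha D_0$ is ample for every $\alpha>\frac{2}{n-1}$, which is (1).

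For the inductive step, assume the Proposition for $k-1$, so that $A:=K_{\Mzeko}+\frac{2}{m-k+2}D_{k-1}$ is ample (here $\frac{2}{m-k+2}$ is the right endpoint of the $(k-1)$-interval). By Theorem~\ref{thm4-1}, $\varphi_k:\Mzek\to\Mzeko$ is a blow-up with exceptional divisor $D_k^{m-k+1}$. Combining Lemma~\ref{canonicaldiv2} with the pull-back formulas of Lemma~\ref{computepushandpull}, a direct computation yields
\[
\varphi_k^*\bigl(K_{\Mzeko}+\alpha D_{k-1}\bigr)=K_{\Mzek}+\alpha D_k+(m-k-1)\Bigl(\tfrac{m-k+2}{2}\,\alpha-1\Bigr)D_k^{m-k+1},
\]
whose correction coefficient vanishes exactly at $\alpha=\frac{2}{m-k+2}$; thus $\varphi_k^*A=K_{\Mzek}+\frac{2}{m-k+2}D_k$, and hence $K_{\Mzek}+\bigl(\frac{2}{m-k+2}+t\bigr)D_k=\varphi_k^*A+t\,D_k$. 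Now $D_k$ is $\varphi_k$-ample: Lemma~\ref{computepushandpull} gives $\varphi_k^*D_{k-1}-D_k=\bigl(\binom{m-k+1}{2}-1\bigr)D_k^{m-k+1}=\frac{(m-k+2)(m-k-1)}{2}D_k^{m-k+1}$, a positive multiple of $D_k^{m-k+1}$ since $1\le k\le m-2$, so $D_k$ agrees with a positive multiple of $-D_k^{m-k+1}$ up to a $\varphi_k$-pull-back, and $-D_k^{m-k+1}$ is $\varphi_k$-ample, being the relatively ample $\cO(1)$ of $\varphi_k$ (the blow-up of a reduced ideal by Proposition~\ref{prop-bltrcen} and \S\ref{sec2.2}). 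Therefore $\varphi_k^*A+t\,D_k$ is ample for all sufficiently small $t>0$, by the standard fact that the pull-back of an ample divisor plus a small multiple of a relatively ample divisor is ample; together with the reduction step this proves (2) for $k$, completing the induction.

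The main obstacle is the bookkeeping inside the inductive step: one must pin down the exact pull-back formula so that the correction term vanishes \emph{precisely} at $\alpha=\frac{2}{m-k+2}$ (this numerical coincidence is what makes two consecutive intervals fit together), and verify the $\varphi_k$-ampleness of $D_k$. One must also treat $\varphi_1$ for $n$ even separately, where $\Mzez=(\PP^1)^n\git G$ is singular and $\varphi_1$ is the blow-up of its $\frac12\binom{n}{m}$ singular points with exceptional divisor $D_1^m$; there the needed pull-back identity is Lemma~\ref{computepushandpull}(2) in the form established for that case, and since $(\PP^1)^n\git G$ is $\QQ$-factorial the argument above runs unchanged with $\QQ$-Cartier divisors.
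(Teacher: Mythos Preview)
Your proposal is correct and follows essentially the same approach as the paper: reduction to $\alpha=\frac{2}{m-k+2}+t$ via the Alexeev--Swinarski nefness, the base case from ampleness of $D_0^2$, and the inductive step via the blow-up $\varphi_k$ and the relative ampleness of $-D_k^{m-k+1}$. Your inductive step is a mild streamlining of the paper's: instead of pulling back at a variable $\alpha$, subtracting $\delta D_k^{m-k+1}$, and solving $\binom{m-k+1}{2}\alpha-\alpha-(m-k-1)=\delta$ for $\alpha$, you pull back at the right endpoint $\alpha=\frac{2}{m-k+2}$ (where the correction vanishes) and add $tD_k$, noting that $D_k\equiv -\bigl(\binom{m-k+1}{2}-1\bigr)D_k^{m-k+1}$ modulo $\varphi_k^*$; this is the same underlying fact, organized a bit more cleanly, and has the minor advantage that it uses only the Proposition at level $k-1$ rather than the extended range $\alpha<\frac{2}{m-k+2}+s'$.
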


Since any positive linear combination of an ample divisor and a
nef divisor is ample \cite[Corollary 1.4.10]{Larz1}, it suffices
to show the following:
\begin{enumerate}\item[(a)] Nefness of $K_{\Mzek}+\alpha D_k$ for
$\alpha =\frac{2}{m-k+1}+s$ where $s$ is some (small) positive
number;
\item[(b)] Ampleness of $K_{\Mzek}+\alpha D_k$ for
$\alpha=\frac{2}{m-k+2}+t$ where $t$ is \emph{any} sufficiently
small positive number. \end{enumerate} We will use Alexeev and
Swinarski's intersection number calculation in \cite{AlexSwin} to
achieve (a) (See Lemma \ref{lem-otherextreme}.) and then (b) will
immediately follow from our Theorem \ref{thm4-1}.

\begin{definition} (\cite{Simpson}) Let $\varphi=\varphi_{n\cdot\epsilon_{m-2},
n\cdot\epsilon_k} : \Mzn \to \Mzek$ be the natural contraction map
(\S\ref{sec2.1}). For $k = 0,1,\cdots,m-2$ and $\alpha > 0$,
define $A(k, \alpha)$ by
\begin{eqnarray*}
    A(k, \alpha) &:=& \varphi^*(K_{\Mzek}+\alpha D_k)\\
    &=& \sum_{j = 2}^{m-k} {j \choose 2}\left(\alpha - \frac{2}{n-1}\right)D^j
    + \sum_{j \ge m-k+1}^m \left(\alpha - \frac{2}{n-1}{j \choose 2}
    + j-2\right)D^j.
\end{eqnarray*}
\end{definition}
Notice that the last equality is an easy consequence of Lemma
\ref{computepushandpull}.

By \cite{Kapranov}, there is a birational morphism $\pi_{\vec{x}}
: \Mzn \to (\PP^1)^n \git_{\vec{x}} G$ for any linearization
$\vec{x} = (x_1, \cdots, x_n) \in \QQ_+^n$. Since the canonical
ample line bundle $\cO_{(\PP^1)^n }(x_1, \cdots, x_n)\git G$ over
$(\PP^1)^n \git_{\vec{x}} G$ is ample, its pull-back $L_{\vec{x}}$
by $\pi_{\vec{x}}$  is certainly nef.

\begin{definition}\cite[Definition 2.3]{AlexSwin}\label{def-symnefdiv}
Let $x$ be a rational number such that $\frac{1}{n-1} \le x \le \frac{2}{n}$.
Set $\vec{x} = \cO(x, \cdots, x, 2-(n-1)x)$.
Define
\[
    V(x, n) := \frac{1}{(n-1)!}\bigotimes_{\tau \in S_n} L_{\tau
    \vec{x}}.
\]
Obviously the symmetric group $S_n$ acts on $\vec{x}$ by permuting
the components of $\vec{x}$.
\end{definition}
Notice that $V(x,n)$ is nef because it is a positive linear
combination of nef line bundles.

\begin{definition}\cite[Definition 3.5]{AlexSwin}
Let $C_{a,b,c,d}$ be \emph{any} vital curve class corresponding to
a partition $S_a \sqcup S_b \sqcup S_c \sqcup S_d$ of $\{1, 2,
\cdots, n\}$
such that $|S_a| = a, \cdots, |S_d|=d$.\\
(1) Suppose $n=2m+1$ is odd. Let $C_i = C_{1, 1, m-i, m+i-1}$,
for $i = 1, 2, \cdots, m-1$.\\
(2) Suppose $n=2m$ is even. Let $C_i = C_{1,1,m-i, m+i-2}$ for $i
= 1, 2, \cdots, m-1$.
\end{definition}

By \cite[Corollary 4.4]{KeelMcKer}, the following computation is straightforward.
\begin{lemma}\label{lem-intAkalpha}
The intersection numbers $C_i \cdot A(k, \alpha)$ are
\[
    C_i \cdot A(k, \alpha) = \left\{\begin{array}{ll}
    \alpha&\mbox{if } i < k\\
    \left(2-{m-k \choose 2}\right)\alpha + m-k-2&\mbox{if } i = k\\
    \left({m-k+1 \choose 2}-1\right)\alpha-m+k+1
    &\mbox{if } i = k+1\\
    0&\mbox{if } i > k+1.
    \end{array}\right.
\]
\end{lemma}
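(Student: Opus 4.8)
The intersection numbers $C_i \cdot A(k,\alpha)$ are
\[
C_i\cdot A(k,\alpha)=\begin{cases}
\alpha & \text{if } i<k,\\[2pt]
\left(2-\binom{m-k}{2}\right)\alpha + m-k-2 & \text{if } i=k,\\[2pt]
\left(\binom{m-k+1}{2}-1\right)\alpha - m+k+1 & \text{if } i=k+1,\\[2pt]
0 & \text{if } i>k+1.
\end{cases}
\]

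The plan is to reduce everything to the intersection pairing between the vital curves $C_i$ and the boundary divisor classes $D^j$ on $\Mzn$, and then to plug into the explicit expression for $A(k,\alpha)$ in terms of the $D^j$ given just above. Recall that by definition
\[
A(k,\alpha)=\sum_{j=2}^{m-k}\binom{j}{2}\Bigl(\alpha-\tfrac{2}{n-1}\Bigr)D^j
+\sum_{j\ge m-k+1}^{m}\Bigl(\alpha-\tfrac{2}{n-1}\binom{j}{2}+j-2\Bigr)D^j,
\]
so that $C_i\cdot A(k,\alpha)$ is a linear combination of the numbers $C_i\cdot D^j$ with these coefficients. Hence the whole lemma is a bookkeeping exercise once the numbers $C_i\cdot D^j$ are known.

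First I would recall the formula of Keel--McKernan \cite[Corollary 4.4]{KeelMcKer} for the intersection of a vital curve $C_{a,b,c,d}$ (attached to a partition $S_a\sqcup S_b\sqcup S_c\sqcup S_d$ with parts of sizes $a,b,c,d$) with a boundary divisor $D_{T,T^c}$ of $\Mzn$: the pairing is $1$ (or suitably $-1$) precisely when the partition $T\sqcup T^c$ is obtained from $\{S_a,S_b,S_c,S_d\}$ by grouping the four parts into two pairs, $0$ when $T$ is a union of one or three of the four parts in a compatible way, and $-1$ for the "diagonal" boundary divisor of the four-pointed curve itself. Summing over all $T$ with $|T|=j$ then gives $C_i\cdot D^j$ as a count of the ways a $j$-element subset can be formed as a union of some of the four blocks $S_a,S_b,S_c,S_d$, weighted by the Keel--McKernan signs. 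For $C_i=C_{1,1,m-i,m+i-1}$ (odd $n$) or $C_i=C_{1,1,m-i,m+i-2}$ (even $n$) the block sizes are $1,1,m-i$, and $n-2-(m-i)$; one enumerates which values of $j$ can be hit — namely $j\in\{2,\ m-i,\ m-i+1,\ m-i+2\}$ together with their complements — and records the resulting contributions to $C_i\cdot D^j$. This is the only genuinely combinatorial step.

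Next I would substitute. For $i>k+1$, the relevant $j$-values where $C_i$ meets a boundary divisor all lie in the range $j\le m-k$ where the coefficient of $D^j$ in $A(k,\alpha)$ is $\binom{j}{2}(\alpha-\tfrac{2}{n-1})$; a short computation using $C_i\cdot(\sum_j\binom{j}{2}D^j)$ and $C_i\cdot(\sum_j D^j)$-type identities (these follow from Keel's relations, or directly from $C_i\cdot K_{\Mzn}$ and $C_i\cdot D$ being computable from Proposition \ref{canonicaldiv1}) shows the contributions cancel and we get $0$. For $i<k$, all blocks are ``small enough'' that $C_i$ only meets divisors $D^j$ with $j$ in the stable range $j\ge m-k+1$, and the signed count collapses to a single surviving term contributing the coefficient of $D^2$ pulled back appropriately; one checks this equals $\alpha$. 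The two boundary cases $i=k$ and $i=k+1$ are where $m-i$ (resp. $m-i+1$) equals $m-k$ (resp.\ $m-k+1$), i.e.\ the curve $C_i$ straddles the threshold between the ``contracted'' range $j\le m-k$ and the ``surviving'' range $j\ge m-k+1$ of $A(k,\alpha)$; here one keeps careful track of which of the four blocks $S_a,S_b,S_c,S_d$ (and their unions) land on which side, obtaining mixed contributions of the form $\binom{m-k}{2}(\alpha-\tfrac{2}{n-1})$ from one side and $(\alpha-\tfrac{2}{n-1}\binom{m-k+1}{2}+m-k-1)$ from the other, plus the $-1$ term from the four-pointed diagonal. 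Simplifying and using $\tfrac{2}{n-1}\binom{j}{2}=\tfrac{j(j-1)}{n-1}$ produces exactly $\bigl(2-\binom{m-k}{2}\bigr)\alpha+m-k-2$ for $i=k$ and $\bigl(\binom{m-k+1}{2}-1\bigr)\alpha-m+k+1$ for $i=k+1$.

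I expect the main obstacle to be precisely the case analysis for $i=k$ and $i=k+1$: one must be scrupulous about the Keel--McKernan signs (the $-1$ attached to the ``central'' boundary divisor of the four-marked-point curve, which is the one relevant when the pair of blocks chosen to form $T$ is the pair of singletons $\{S_a,S_b\}$) and about whether, for the even-$n$ curves $C_{1,1,m-i,m+i-2}$, a block union of size $m-i+2$ still falls in the surviving range when $i=k+1$ (it does not — it equals $m-k+1$, which is the first surviving index, so it contributes via the second sum of $A(k,\alpha)$). Once the four values of $j$ hit by each $C_i$ are tabulated with correct signs, the rest is the linear algebra of substituting into the displayed formula for $A(k,\alpha)$ and simplifying; I would organize this as a single table (rows $i<k$, $i=k$, $i=k+1$, $i>k+1$; columns the admissible $j$) to make the cancellations transparent.
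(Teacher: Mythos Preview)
Your approach is exactly the paper's: invoke \cite[Corollary 4.4]{KeelMcKer} to compute $C_i\cdot D^j$ and then substitute into the displayed expression for $A(k,\alpha)$. The paper says nothing more than ``the following computation is straightforward,'' so you have correctly identified the intended method.

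There is, however, a factual slip in your sketch for the case $i<k$. You assert that ``$C_i$ only meets divisors $D^j$ with $j$ in the stable range $j\ge m-k+1$.'' This is false: for any $i$, the curve $C_i=C_{1,1,m-i,\,n-2-(m-i)}$ always meets $D^2$ (via the partition $\{S_a\cup S_b\}\,|\,\{S_c\cup S_d\}$, which has a part of size $2$), and since $k\le m-2$ we have $2\le m-k$, so $j=2$ sits in the \emph{contracted} range. The correct bookkeeping for $i<k$ (and $i\ge 2$, odd $n$, say) is that $C_i$ hits $D^2$ with multiplicity $+1$ and $D^{m-i},D^{m-i+1},D^{m-i+2}$ with multiplicities $-1,+2,-1$; the latter three indices do lie in the surviving range $j\ge m-k+1$. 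When you substitute, the $\alpha$-coefficients add to $1-1+2-1=1$, the constant terms $-(m-i-2)+2(m-i-1)-(m-i)$ vanish, and the $\tfrac{2}{n-1}$-terms cancel via the second-difference identity $\binom{r}{2}-2\binom{r+1}{2}+\binom{r+2}{2}=1$ against the $-1$ coming from $j=2$. So the answer $\alpha$ is right, but it is \emph{not} because the $j=2$ contribution is absent; rather, it cancels against part of the surviving-range contribution. Your description of the $i>k+1$ case is correct (there all four relevant $j$ values, including $2$, lie in the contracted range, and the same second-difference identity yields $0$), and your plan for $i=k,\,k+1$ is sound once you keep track of $j=2$ correctly.
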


This lemma is in fact a slight generalization of \cite[Lemma
3.7]{AlexSwin} where the intersection numbers for
$\alpha=\frac{2}{m-k+1}$ only are calculated.

The $S_n$-invariant subspace of Neron-Severi vector space of
$\Mzn$ is generated by $D^j$ for $j=2,3, \cdots, m$ (\cite[Theorem
1.3]{KeelMcKer}). Therefore, in order to determine the linear
dependency of $S_n$-invariant divisors, we find $m-1$ linearly
independent curve classes, and calculate the intersection numbers
of divisors with these curves classes. Let $U$ be an $(m-1) \times
(m-1)$ matrix with entries $U_{ij} = (C_i \cdot V(\frac{1}{m+j},
n))$ for $1 \le i,j \le m-1$. Since $V(\frac{1}{m+j}, n)$'s are
all nef, all entries of $U$ are nonnegative.
\begin{lemma}\cite[\S3.2, \S3.3]{AlexSwin}\label{lem-nefbdl}
(1) The intersection matrix $U$ is upper triangular and
if $i \le j$, then $U_{ij} > 0$. In particular, $U$ is invertible.\\
(2) Let $\vec{a} = ((C_1 \cdot A(k, \frac{2}{m-k+1})), \cdots,
(C_{m-1} \cdot A(k,\frac{2}{m-k+1})))^t$ be the column vector
of intersection numbers.
Let $\vec{c} = (c_1, c_2, \cdots, c_{m-1})^t$ be the
unique solution of the system of linear equations
$U \vec{c} = \vec{a}$.
Then $c_i > 0$ for $i \le k+1$ and $c_i = 0$ for $i \ge k+2$.
\end{lemma}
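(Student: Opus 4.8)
\textbf{Proof proposal for Lemma \ref{lem-nefbdl}.}

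The plan is to establish the two assertions in turn, leaning on the intersection-number computations already recorded in Lemma \ref{lem-intAkalpha} together with the explicit description of the vital curve classes $C_i$. For part (1), the key point is to unwind the definition of $V(x,n)$ as an average of pull-backs $L_{\tau\vec{x}}$ of the ample bundles $\cO_{(\PP^1)^n}(\tau\vec{x})\git G$ under the Kapranov morphisms $\pi_{\tau\vec{x}}:\Mzn\to(\PP^1)^n\git_{\tau\vec{x}}G$. Intersecting such a pull-back with a vital curve $C_i$ reduces, by the projection formula, to the degree of $\cO(\tau\vec{x})$ on the image of $C_i$, which is $\PP^1$ or a point; this degree is an explicit linear function of the weights $\tau\vec{x}$. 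Summing over $\tau\in S_n$ and using the structure $C_i=C_{1,1,m-i,m+i-1}$ (resp. $C_{1,1,m-i,m+i-2}$) one computes $U_{ij}=C_i\cdot V(\tfrac{1}{m+j},n)$ directly. I would show it is upper triangular by checking that for $i>j$ the curve $C_i$ lies in a fiber of \emph{every} $\pi_{\tau\vec x}$ with $x=\tfrac1{m+j}$ — equivalently that the relevant contraction collapses $C_i$ — so the intersection number vanishes; and that for $i\le j$ the curve $C_i$ is not contracted, giving a strictly positive average. Upper-triangularity with nonzero diagonal entries immediately yields $\det U=\prod_i U_{ii}\ne 0$, hence invertibility.

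For part (2), I would argue as follows. By Lemma \ref{lem-intAkalpha} applied with $\alpha=\frac{2}{m-k+1}$, the vector $\vec a$ has entries $a_i=\frac{2}{m-k+1}$ for $i<k$, a specific value for $i=k$ and $i=k+1$, and $a_i=0$ for $i>k+1$. The claim $c_i=0$ for $i\ge k+2$ is then a formal consequence of upper-triangularity: the last $m-1-(k+1)$ equations of $U\vec c=\vec a$ read, in reverse order, $U_{m-1,m-1}c_{m-1}=0$, then $U_{m-2,m-2}c_{m-2}+U_{m-2,m-1}c_{m-1}=0$, and so on up through row $k+2$, each forcing the corresponding $c_i=0$ since the diagonal entry is nonzero. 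Once these vanish, the remaining equations (rows $1,\dots,k+1$) form a square upper-triangular system $U'\vec c\,'=\vec a\,'$ in the unknowns $c_1,\dots,c_{k+1}$, and positivity $c_i>0$ for $i\le k+1$ has to be extracted by back-substitution, starting from $c_{k+1}=a_{k+1}/U_{k+1,k+1}$ and working upward. This is where the actual arithmetic of \cite[\S3.2, \S3.3]{AlexSwin} enters: one must verify that at each step the right-hand side minus the already-computed contributions $\sum_{j>i}U_{ij}c_j$ stays strictly positive.

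The main obstacle is precisely this last positivity check. Upper-triangularity and invertibility are essentially bookkeeping once the curve–divisor incidences are understood, but the sign of each $c_i$ depends on a delicate cancellation between the (possibly large) off-diagonal entries $U_{ij}$, the solved values $c_j$ for $j>i$, and the entries of $\vec a$ — in particular the mixed-sign entry $a_{k+1}=\big(\binom{m-k+1}{2}-1\big)\alpha-m+k+1$ at $\alpha=\frac{2}{m-k+1}$. My intended route is to carry out the back-substitution symbolically, using the closed forms for $U_{ij}$ obtained in part (1) and for $a_i$ from Lemma \ref{lem-intAkalpha}, and to show the resulting expression for each $c_i$ is a positive rational number; alternatively, since this is quoted verbatim from \cite[\S3.2, \S3.3]{AlexSwin}, one may simply cite their computation. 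I would then record that, because $V(\tfrac1{m+j},n)$ is nef for every $j$, the divisor $\sum_i c_i V(\tfrac1{m+j},n)$ with $c_i\ge 0$ is nef, which is the form in which this lemma will be used to produce the nef divisor $K_{\Mzek}+\alpha D_k$ at $\alpha=\frac{2}{m-k+1}+s$ in the proof of Proposition \ref{prop-amplerange}.
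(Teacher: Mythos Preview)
The paper does not give its own proof of this lemma: it is stated as a citation of \cite[\S3.2, \S3.3]{AlexSwin} and used as a black box in the proof of Lemma \ref{lem-otherextreme}. Your proposal therefore goes well beyond what the paper does, by sketching the actual argument behind the cited result. The outline you give is correct and is essentially the strategy of Alexeev--Swinarski: the upper-triangularity of $U$ comes from the fact that $C_i$ is contracted by $\pi_{\tau\vec{x}}$ for $x=\tfrac{1}{m+j}$ precisely when $i>j$, and the vanishing $c_i=0$ for $i\ge k+2$ is the formal back-substitution you describe, using $a_i=0$ for $i>k+1$ from Lemma \ref{lem-intAkalpha}. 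One minor correction: the entry $a_{k+1}$ is not ``mixed-sign'' in value --- at $\alpha=\tfrac{2}{m-k+1}$ one computes $a_{k+1}=\tfrac{m-k-1}{m-k+1}>0$ for $k\le m-2$ --- so $c_{k+1}>0$ is immediate, and the genuine work is the inductive positivity of $c_k,\ldots,c_1$, exactly as you say. Since the paper is content to cite \cite{AlexSwin} here, your alternative of simply invoking that reference is what the paper itself does.
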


This lemma implies that $A(k,\frac{2}{m-k+1})$ is a positive
linear combination of $V(\frac{1}{m+j}, n)$ for $j = 1, 2, \cdots,
k+1$. Note that $A(k,\frac{2}{m-k+2}) =
A(k-1,\frac{2}{m-(k-1)+1})$ and that for $\frac{2}{m-k+2} \le
\alpha \le \frac{2}{m-k+1}$, $A(k,\alpha)$ is a nonnegative linear
combination of $A(k,\frac{2}{m-k+2})$ and $A(k,\frac{2}{m-k+1})$.
Hence by the numerical result in Lemma \ref{lem-nefbdl} and the
convexity of the nef cone, $A(k,\alpha)$ is nef for
$\frac{2}{m-k+2} \le \alpha \le \frac{2}{m-k+1}$. Actually we can
slightly improve this result by using continuity.

\begin{lemma}\label{lem-otherextreme}
For each $k = 0,1,\cdots,m-2$, there exists $s > 0$ such that
$A(k,\alpha)$ is nef for $\frac{2}{m-k+2} \le \alpha \le
\frac{2}{m-k+1}+s$. Therefore, $K_{\Mzek}+\alpha D_k$ is nef for
$\frac{2}{m-k+2} \le \alpha \le \frac{2}{m-k+1}+s$.
\end{lemma}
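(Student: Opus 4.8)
The nefness of $A(k,\alpha)$ on the \emph{closed} interval $\frac{2}{m-k+2}\le\alpha\le\frac{2}{m-k+1}$ is already in hand (from Lemma \ref{lem-nefbdl} and convexity of the nef cone), so it suffices to push it slightly past the right endpoint $\alpha_0:=\frac{2}{m-k+1}$. The plan is a continuity argument in the coefficients of the expansion of $A(k,\alpha)$ along the nef classes $V(\frac1{m+j},n)$. First I would record that $A(k,\alpha)=\varphi^*(K_{\Mzek}+\alpha D_k)$ is an $S_n$-invariant divisor class on $\Mzn$, that each $V(\frac1{m+j},n)$ for $1\le j\le m-1$ is an $S_n$-invariant nef class, and that, since the intersection matrix $U=\big(C_i\cdot V(\tfrac1{m+j},n)\big)_{i,j}$ is invertible by Lemma \ref{lem-nefbdl}(1), the classes $V(\frac1{m+j},n)$ are linearly independent and hence form a $\QQ$-basis of the $(m-1)$-dimensional $S_n$-invariant part of $\mathrm{Pic}(\Mzn)_{\QQ}$ recalled above; in particular an $S_n$-invariant class is determined by its intersection numbers against $C_1,\dots,C_{m-1}$. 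Consequently, writing $\vec a(\alpha)=\big(C_i\cdot A(k,\alpha)\big)_i$ and $\vec c(\alpha)=(c_1(\alpha),\dots,c_{m-1}(\alpha))^t:=U^{-1}\vec a(\alpha)$, one gets the equality of divisor classes $A(k,\alpha)=\sum_{j=1}^{m-1}c_j(\alpha)\,V(\tfrac1{m+j},n)$.

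Next I would combine the explicit intersection numbers of Lemma \ref{lem-intAkalpha} with Lemma \ref{lem-nefbdl}(2). By Lemma \ref{lem-intAkalpha} each entry $a_i(\alpha)$ is affine in $\alpha$, and $a_i(\alpha)=0$ for every $i\ge k+2$ and every $\alpha$. Since $U$ is upper triangular with positive diagonal entries, back-substitution from the bottom row shows first that $c_i(\alpha)\equiv 0$ for all $i\ge k+2$ — this uses only the identical vanishing of $a_{k+2},\dots,a_{m-1}$, so it holds for every $\alpha$ in the relevant range — and then that $c_1(\alpha),\dots,c_{k+1}(\alpha)$ are affine functions of $\alpha$. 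By Lemma \ref{lem-nefbdl}(2), at $\alpha=\alpha_0$ we have $c_i(\alpha_0)>0$ for $1\le i\le k+1$.

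Because $c_1,\dots,c_{k+1}$ are affine, hence continuous, and strictly positive at $\alpha_0$, there is $s>0$ with $c_i(\alpha)>0$ for all $1\le i\le k+1$ and all $\alpha\in[\alpha_0,\alpha_0+s]$. For such $\alpha$ one has $A(k,\alpha)=\sum_{i=1}^{k+1}c_i(\alpha)\,V(\tfrac1{m+i},n)$, a nonnegative combination of nef classes, hence nef; together with the already known nefness on $[\frac{2}{m-k+2},\alpha_0]$ this gives nefness of $A(k,\alpha)$ on $[\frac{2}{m-k+2},\frac{2}{m-k+1}+s]$. Finally, since $\varphi=\varphi_{n\cdot\epsilon_{m-2},\,n\cdot\epsilon_k}\colon\Mzn\to\Mzek$ is a surjective morphism with $\varphi^*(K_{\Mzek}+\alpha D_k)=A(k,\alpha)$, nefness of $K_{\Mzek}+\alpha D_k$ follows: any irreducible curve in $\Mzek$ is the image of an irreducible curve in $\Mzn$ on which $\varphi$ restricts to a finite surjection, and the projection formula then forces a nonnegative intersection number.

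I do not expect a genuine obstacle here; the lemma is essentially a continuity upgrade of the discussion preceding it. The only step that requires care is the bookkeeping assertion that $c_i(\alpha)\equiv 0$ for $i\ge k+2$ \emph{identically} in $\alpha$ rather than just at $\alpha_0$: this is exactly where the upper-triangularity of $U$ together with the $\alpha$-independent vanishing $C_i\cdot A(k,\alpha)=0$ for $i>k+1$ (Lemma \ref{lem-intAkalpha}) is essential, since it confines $A(k,\alpha)$ to the sub-cone spanned by $V(\tfrac1{m+1},n),\dots,V(\tfrac1{m+k+1},n)$ for all $\alpha$ under consideration, not merely to the full nef cone; once that is in place, affineness plus strict positivity at $\alpha_0$ closes the gap immediately.
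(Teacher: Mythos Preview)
Your proof is correct and follows essentially the same approach as the paper: solve $U\vec{c}^\alpha=\vec{a}^\alpha$, use upper-triangularity of $U$ together with the identical vanishing $C_i\cdot A(k,\alpha)=0$ for $i>k+1$ from Lemma~\ref{lem-intAkalpha} to force $c_i(\alpha)\equiv 0$ for $i\ge k+2$, and then invoke continuity to keep $c_1,\dots,c_{k+1}$ positive slightly beyond $\alpha_0$. You supply more detail than the paper (the basis argument for the $S_n$-invariant N\'eron--Severi space and the explicit pullback-of-nef justification), but the substance is the same.
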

\begin{proof}
Let $\vec{a}^\alpha = ((C_1 \cdot A(k,\alpha)), \cdots, (C_{m-1}
\cdot A(k,\alpha)))^t$ and let $\vec{c}^\alpha = (c^\alpha_1,
\cdots, c^\alpha_{m-1})^t$ be the unique solution of equation $U
\vec{c}^\alpha = \vec{a}^\alpha$. Then by continuity, the
components $c^\alpha_1, c^\alpha_2, \cdots, c^\alpha_{k+1}$ remain
positive when $\alpha$ is slightly increased. By Lemma
\ref{lem-intAkalpha} and the upper triangularity of $U$,
$c^\alpha_i$ for $i > k+1$ are all zero. Hence $A(k, \alpha)$ is
still nef for $\alpha = \frac{2}{m-k+1}+s$ with sufficiently small
$s > 0$.
\end{proof}

With this nefness result, the proof of Proposition
\ref{prop-amplerange} is obtained as a quick application of
Theorem \ref{thm4-1}.

\begin{proof}[Proof of Proposition \ref{prop-amplerange}]
We prove that in fact $K_{\Mzek}+\alpha D_k$ is ample for
$\frac{2}{m-k+2} < \alpha < \frac{2}{m-k+1}+s$ where $s$ is the
small positive rational number in Lemma \ref{lem-otherextreme}.
Since a positive linear combination of an ample divisor and a nef
divisor is ample by \cite[Corollary 1.4.10]{Larz1}, it suffices to
show that $K_{\Mzek}+\alpha D_k$ is ample when $\alpha=
\frac{2}{m-k+2}+t$ for any sufficiently small $t>0$ by Lemma
\ref{lem-otherextreme}.

We use induction on $k$. It is certainly true when $k=0$ by Lemma
\ref{canonicaldiv2} because $D^2_0$ is ample as the quotient of
$\cO(n-1,\cdots,n-1)$. Suppose $K_{\Mzeko}+\alpha D_{k-1}$ is ample
for $\frac{2}{m-k+3} < \alpha < \frac{2}{m-k+2}+s'$ where
$s'$ is the small positive number in Lemma \ref{lem-otherextreme}
for $k-1$.
Since $\varphi_k$ is a blow-up with exceptional divisor $D_k^{m-k+1}$,
\[
    \varphi_k^*(K_{\Mzeko}+\alpha D_{k-1})-\delta D_k^{m-k+1}
\]
is ample for any sufficiently small $\delta>0$ by \cite[II
7.10]{Hartshorne}. A direct computation with Lemmas
\ref{computepushandpull} and \ref{canonicaldiv2} provides us with
\[
\varphi_k^*(K_{\Mzeko}+\alpha D_{k-1})-\delta D_k^{m-k+1}\]  \[
=K_{\Mzek}+\alpha D_{k}+\left( \binom{m-k+1}{2}\alpha
-\alpha-(m-k-1)-\delta \right)D_k^{m-k+1}.
\]
If $\alpha=\frac{2}{m-k+2}$, $\binom{m-k+1}{2}\alpha
-\alpha-(m-k-1)=0$ and thus we can find $\alpha>\frac{2}{m-k+2}$
satisfying $\binom{m-k+1}{2}\alpha -\alpha-(m-k-1)-\delta  =0$. If
$\delta$ decreases to $0$, the solution $\alpha$ decreases to
$\frac{2}{m-k+2}$. Hence $K_{\Mzek}+\alpha D_k$ is ample when
$\alpha= \frac{2}{m-k+2}+t$ for any sufficiently small $t>0$ as
desired.
\end{proof}

\begin{remark}\label{rem-compproofs}
There are already two different proofs of M. Simpson's theorem
(Theorem \ref{thm6.1}) given by Fedorchuk--Smyth \cite{FedoSmyt},
and by Alexeev--Swinarski \cite{AlexSwin} without relying on
Fulton's conjecture. Here we give a brief outline of the two
proofs.

In \cite[Corollary 3.5]{Simpson}, Simpson proves that Theorem
\ref{thm6.1} is an immediate consequence of the ampleness of
$K_{\Mzek} + \alpha D_k$ for $\frac{2}{m-k+2} < \alpha \le
\frac{2}{m-k+1}$ (Proposition \ref{prop-amplerange}). The differences in
the proofs of Theorem \ref{thm6.1} reside solely in different ways
of proving Proposition \ref{prop-amplerange}.

The ampleness of $K_{\Mzek} + \alpha D_k$ follows if the divisor
$A(k, \alpha) = \varphi^*(K_{\Mzek}+\alpha D_k)$ is nef and its
linear system contracts only $\varphi$-exceptional curves. Here,
$\varphi : \Mzn \to \Mzek$ is the natural contraction map
(\S\ref{sec2.1}). Alexeev and Swinarski prove Proposition
\ref{prop-amplerange} in two stages: First the nefness of $A(k,
\alpha)$ for suitable ranges is proved and next they show that the
divisors are the pull-backs of ample line bundles on $\Mzek$.
Lemma \ref{lem-otherextreme} above is only a negligible
improvement of the nefness result in \cite[\S3]{AlexSwin}. In
\cite[Theorem 4.1]{AlexSwin}, they give a criterion for a line
bundle to be the pull-back of an ample line bundle on $\Mzek$.
After some rather sophisticated combinatorial computations, they
prove in \cite[Proposition 4.2]{AlexSwin} that $A(k, \alpha)$
satisfies the desired properties.

On the other hand, Fedorchuk and Smyth show that $K_{\Mzek} +
\alpha D_k$ is ample as follows. Firstly, by applying the
Grothendieck-Riemann-Roch theorem, they represent
$K_{\Mzek}+\alpha D_k$ as a linear combination of boundary
divisors and tautological $\psi$-classes. Secondly, for such a
linear combination of divisor classes and for a complete curve in
$\Mzek$ parameterizing a family of curves with smooth general
member, they perform brilliant computations and get several
inequalities satisfied by their intersection numbers
(\cite[Proposition 3.2]{FedoSmyt}). Combining these inequalities,
they prove in particular that $K_{\Mzek}+\alpha D_k$ has positive
intersection with any complete curve on $\Mzek$ with smooth
general member (\cite[Theorem 4.3]{FedoSmyt}). Thirdly, they prove
that if the divisor class intersects positively with any curve
with smooth general member, then it intersects positively with all
curves by an induction argument on the dimension. Thus they
establish the fact that $K_{\Mzek}+\alpha D_k$ has positive
intersection with all curves. Lastly, they prove that the same
property holds even if $K_{\Mzek}+\alpha D_k$ is perturbed by any
small linear combination of boundary divisors. Since the boundary
divisors generate the Neron-Severi vector space, $K_{\Mzek}+\alpha
D_k$ lies in the interior of the nef cone and the desired
ampleness follows.
\end{remark}

\section{The Picard groups of $\Mzek$}\label{sec5}

As a byproduct of our GIT construction of the moduli spaces of
weighted pointed curves, we give a basis of the \emph{integral}
Picard group of $\Mzek$ for $0 \le k \le m-2$.

Let $e_i$ be the $i$-th standard basis vector of $\ZZ^n$. For
notational convenience, set $e_{n+1} = e_1$. For $S\subset
\{1,2,\cdots,n\}$, let $D_k^S=\Sigma_{n-m+k}^S\git G\subset
F_{n-m+k}\git G\cong \Mzek$. Note that if $m-k<|S|\le m$ or
$|S|=2$, $D_k^S$ is a divisor of $\Mzek$.

\begin{theorem}\label{picardgroup}
(1) If $n$ is odd, then the Picard group of $\Mzek$ is
\[
    \mathrm{Pic} (\Mzek) \cong \bigoplus_{m-k < |S| \le m} \ZZ D^S_k \oplus
    \bigoplus_{i=1}^n \ZZ D_k^{\{i, i+1\}}
\]
for $0 \le k \le m-2$.\\
(2) If $n$ is even, then the Picard group of $\Mzek$ is
\[
    \mathrm{Pic} (\Mzek) \cong \bigoplus_{m-k < |S| < m} \ZZ D^S_k \oplus
    \bigoplus_{1 \in S, |S| = m} \ZZ D^S_k \oplus
    \bigoplus_{i=1}^{n-1} \ZZ D_k^{\{i, i+1\}}
    \oplus \ZZ D_k^{\{1, n-1\}}.
\]
for $1 \le k \le m-2$.
\end{theorem}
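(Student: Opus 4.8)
The plan is to run an ascending induction on $k$ along the blow‑up tower \eqref{eq6.1} of Theorem~\ref{thm4-1}, starting from its leftmost term and transporting an explicit $\ZZ$-basis across each blow‑up $\varphi_k$. Since every $\Mzek$ is a smooth projective rational variety, $\mathrm{Pic}(\Mzek)$ is finitely generated and torsion‑free of rank $\rho(\Mzek)$ as computed in Lemma~\ref{lem-PicNumMze}; hence it suffices to produce the asserted divisor classes and show they \emph{generate} (a spanning set of size equal to the rank in a free abelian group is automatically a basis).

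\emph{Base case via GIT descent.} For odd $n$ the leftmost term is $\Mzez=(\PP^1)^n\git G$, which is smooth, and $F_0^{ss}=F_0^s$ with all stabilizers equal to $\{\pm1\}$; by the descent lemma \cite{DN}, $\mathrm{Pic}((\PP^1)^n\git G)$ is exactly the sublattice of $\mathrm{Pic}((\PP^1)^n)=\ZZ^n$ (the $i$-th factor spanned by $e_i$) of vectors with even coordinate sum, since $-1\in SL(2)$ acts on $\cO(a_1,\dots,a_n)$ by $(-1)^{\sum a_i}$. The $n$ cyclic divisors $D_0^{\{i,i+1\}}$ are the descents of $\cO(e_i+e_{i+1})$, and the $n\times n$ circulant matrix with rows $e_i+e_{i+1}$ has determinant $\prod_{\zeta^n=1}(1+\zeta)=1-(-1)^n=2$, so these classes form a $\ZZ$-basis of the even‑sum lattice; this is the $k=0$ assertion. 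For even $n$ the leftmost term in \eqref{eq6.1} is $\Mzeo$, which by Theorem~\ref{thm4-1} is $\widetilde F_m^{\,s}/G$, where $\widetilde F_m^{\,s}$ is (up to a codimension $\ge2$ locus) the blow‑up of $F_0^{ss}$ along the $\tfrac12\binom nm$ disjoint smooth centres $\Sigma_m^{S,S^c}$, so $\mathrm{Pic}(\widetilde F_m^{\,s})$ is free with basis $\{\cO(e_i)\}\cup\{\widetilde\Sigma_m^S\}$; all stabilizers in $\widetilde F_m^{\,s}$ are finite ($\ZZ_2$ generically and $\ZZ_4$ along the exceptional divisors), so by \cite{DN} the descending bundles form a finite‑index, hence full‑rank, sublattice. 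Running the stabilizer analysis exactly as in the proof of Lemma~\ref{computepushandpull} — the $\{\pm1\}$‑condition forces an even coordinate sum, and the $\ZZ_4$‑condition along each exceptional divisor forces $2\widetilde\Sigma_m^S$ (not $\widetilde\Sigma_m^S$) to descend, to the class $D_1^S$ — and choosing coordinates, one reads off the basis $\{D_1^S:1\in S,\,|S|=m\}\cup\{D_1^{\{i,i+1\}}:1\le i\le n-1\}\cup\{D_1^{\{1,n-1\}}\}$; the non‑adjacent class $D_1^{\{1,n-1\}}$ replaces $D_1^{\{n,1\}}$ precisely because for even $n$ the cyclic classes satisfy $\sum_{i\ \mathrm{odd}}(e_i+e_{i+1})=\sum_{i\ \mathrm{even}}(e_i+e_{i+1})$, while the functional $\ell(x)=\sum_i(-1)^{i+1}x_i$ distinguishes $e_1+e_{n-1}$ (value $2$) from that relation.

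\emph{Inductive step.} Assume the result for $\Mzeko$ (with $k\ge1$ when $n$ is odd, $k\ge2$ when $n$ is even). By Theorem~\ref{thm4-1}, $\varphi_k:\Mzek\to\Mzeko$ is the blow‑up of the smooth variety $\Mzeko$ along the transversal union $\bigcup_{|S'|=m-k+1}D_{k-1}^{S'}$ of $\binom n{m-k+1}$ smooth subvarieties, which by Proposition~\ref{prop-bltrcen} is a composition of that many smooth blow‑ups; hence $\varphi_k^*$ is injective and
\[
\mathrm{Pic}(\Mzek)=\varphi_k^*\mathrm{Pic}(\Mzeko)\ \oplus\ \bigoplus_{|S'|=m-k+1}\ZZ\, D_k^{S'},
\]
the second summand generated by the proper transforms $D_k^{S'}$ of the exceptional divisors. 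Pulling back the inductive basis: for $|S|>m-k+1$ the divisor $D_{k-1}^S$ contains no irreducible component of the blow‑up centre (every such component has index set of size $m-k+1<|S|$), so $\varphi_k^*D_{k-1}^S=D_k^S$; for a two‑element $S$ (and for $\{1,n-1\}$ in the even case) one has $\varphi_k^*D_{k-1}^S=D_k^S+\sum_{S'\supseteq S,\,|S'|=m-k+1}D_k^{S'}$, exactly as in Lemma~\ref{computepushandpull}. All correction terms lie among the new exceptional generators $D_k^{S'}$, $|S'|=m-k+1$, so this is a unitriangular change of basis; combined with the displayed decomposition it produces precisely the asserted basis of $\mathrm{Pic}(\Mzek)$ — the classes $D_k^S$ with $m-k<|S|\le m$ (i.e. the inherited ones with $m-k+1<|S|\le m$ together with the new exceptional ones with $|S'|=m-k+1$) and the $n$ two‑element classes, with the even‑case conventions ($1\in S$ among $|S|=m$, and $D_k^{\{1,n-1\}}$ in place of $D_k^{\{n,1\}}$) simply carried along.

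I expect the main obstacle to be the base case for even $n$, not the induction, which becomes essentially formal once Theorem~\ref{thm4-1} and Proposition~\ref{prop-bltrcen} are available. The difficulty is that $(\PP^1)^n\git G$ itself carries $\CC^*$-stabilized (singular) points, so one cannot work with it directly; one must pass to Kirwan's partial desingularization $\widetilde F_m^{\,s}$ — where all stabilizers are finite, which is exactly what makes $\mathrm{Pic}$ of the quotient have the expected rank $n+\tfrac12\binom nm$ — and there the delicate points are (i) getting the integral normalization of the exceptional classes right, i.e.\ that it is $2\widetilde\Sigma_m^S$ and not $\widetilde\Sigma_m^S$ that descends to $D_1^S$ (miscounting this changes the lattice, not just the rank), and (ii) identifying the even‑$n$ linear relation among the cyclic boundary divisors and hence the correct replacement $D_1^{\{1,n-1\}}$. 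A minor bookkeeping point in the inductive step is to confirm that $\varphi_k^*D_{k-1}^S=D_k^S$ holds with no exceptional correction for every $S$ with $m-k+1<|S|\le m$; this is immediate from the size comparison just noted.
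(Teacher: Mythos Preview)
Your approach is essentially the same as the paper's: GIT descent via Kempf's lemma \cite{DN} for the base of the tower, then the blow-up formula for Picard groups together with a unitriangular basis change along each $\varphi_k$. The inductive step is identical, and your odd-$n$ base case is the paper's argument with the added circulant-determinant computation spelling out what the paper calls ``elementary to check.'' For even $n$ the paper takes a slightly more roundabout route---first computing $\mathrm{Pic}(F_m\git G)$, then using a commutative diagram relating $\mathrm{Pic}(\tilde F_m^s)$, $\mathrm{Pic}(F_{m+1}^{ss})$, and $\mathrm{Pic}(F_{m+1}\git G)$ to pin down exactly which bundles descend---whereas you work directly on $\tilde F_m^s$; both arrive at the same basis. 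One caution: your assertion that it is $2\tilde\Sigma_m^S$ (not $\tilde\Sigma_m^S$) that descends to $D_1^S$ is the statement used in the proof of Lemma~\ref{computepushandpull} for the \emph{total} exceptional class, but in the Picard-group proof the paper records $p^*D_1^S=\tilde\Sigma_{m+1}^S$ for an \emph{individual} $S$; make sure your normalization is consistent when you write this up, since getting the integral multiple wrong here would change the lattice.
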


\begin{proof}
Since the codimensions of unstable strata in $(\PP^1)^n$ are greater
than 1, $$\mathrm{Pic} (((\PP^1)^n)^{ss}) = \mathrm{Pic} ((\PP^1)^n)
\cong \oplus_{1 \le i \le n} \ZZ \cO(e_i).$$ For all $x \in
((\PP^1)^n)^s$, $G_x \cong \{\pm 1\}$. If $n$ is even and $x$ is
strictly semistable point with closed orbit, then $G_x \cong \CC^*$.
Since $G$ is connected, $G$ acts on the discrete set
$\mathrm{Pic}((\PP^1)^n)$ trivially. By Kempf's descent lemma
(\cite[theorem 2.3]{DN}) and by checking the actions of the
stabilizers on the fibers of line bundles, we deduce that $\cO(a_1,
a_2, \cdots, a_n)$ descends to $((\PP^1)^n)^{ss} \git G$ if and only
if $2$ divides $\sum a_i$.

Consider the case when $n$ is odd first. It is elementary to check
that the subgroup $\{(a_1, \cdots, a_n) \in \ZZ^n | \sum a_i \in
2\ZZ \}$ is free abelian of rank $n$ and $\{e_i + e_{i+1}\}$ for
$1 \le i \le n$ form a basis of this group. Furthermore, for
$S=\{i,j\}$ with $i\ne j$, the big diagonal $(\Sigma_{m+1}^S)^s =
(\Sigma_0^S)^s$ satisfies $\cO((\Sigma_0^S)^s) \cong
\cO_{F_0^s}(e_i+e_j)$. Hence in $F_{m+1}\git G = F_0 \git G$,
$\cO(\Sigma_0^S \git G) \cong \cO(e_i + e_j)$. Therefore we have
\[
    \mathrm{Pic} (\Mzez) = \mathrm{Pic} (F_{m+1} \git G)
    = \bigoplus_{i = 1}^n \ZZ D_0^{\{i, i+1\}}.
\]

By Theorem \ref{thm4-1}, the contraction morphism $\varphi_k :
\Mzek \to \Mzeko$ is the blow-up along the union of transversally
intersecting smooth subvarieties. By \S2.3, this is a composition
of smooth blow-ups. In $\Mzek$, the exceptional divisors are
$D^S_k$ for $|S| = {m-k+1}$. So the Picard group of $\Mzek$ is
\[
    \mathrm{Pic} (\Mzek) \cong \varphi_k^* \mathrm{Pic} (\Mzeko) \oplus
    \bigoplus_{|S| = {m-k+1}}\ZZ D^S_k.
\]
by \cite[II Exe. 8.5]{Hartshorne}. For any $S$ with $|S| = 2$,
$D_{k-1}^S$ contains the blow-up center $D_{k-1}^{S'}$ if $S
\subset {S'}$. So $\varphi_k^* (D_{k-1}^S)$ is the sum of $D_k^S$
and a linear combination of $D_k^{S'}$ for $S' \supset S, |S'| =
m-k+1$. If $|S| > 2$, then $\varphi_k^* D^S_{k-1} = D^S_k$ since
it does not contain any blow-up centers. After obvious basis
change, we get the desired result by induction.

Now suppose that $n$ is even. Still the group $\{ (a_1, \cdots,
a_n) \in \ZZ^n | \sum a_i \in 2\ZZ\}$ is free abelian of rank $n$
and $\{e_i + e_{i+1}\}_{1 \le i \le n-1} \cup \{e_1 + e_{n-1}\}$
form a basis. In $F_m\git G = F_0 \git G$, $\cO(\Sigma_m^S\git G)
\cong \cO(e_i + e_j)$ when $S = \{i, j\}$ with $i\ne j$. Hence
\[
    \mathrm{Pic} (F_m \git G) = \bigoplus_{i=1}^{n-1}
    \ZZ D_0^{\{i, i+1\}} \oplus \ZZ D_0^{\{1, n-1\}}.
\]

In $\tilde{F}_m$, the unstable loci have codimension two.
Therefore we have
\[
    \mathrm{Pic} (\tilde{F}_m^s) = \mathrm{Pic}(\tilde{F}_m)
    = \pi_m^* \mathrm{Pic} (F_m^{ss}) \oplus \bigoplus_{1 \in S, |S| = m}
    \ZZ \tilde{\Sigma}_m^S,
\]
where $\pi_m : \tilde{F}_m \to F_m^{ss}$ is the blow-up morphism,
and $\tilde{\Sigma}_m^S = \pi_m^{-1}(\Sigma_m^S \cap \Sigma_m^{S^c})$
for $|S|=m$.

By Kempf's descent lemma, $\mathrm{Pic}(F_{m+k}\git G)$ is a
subgroup of $\mathrm{Pic}(F_{m+k}^{ss})$ and
$\mathrm{Pic}(\tilde{F}_{m+k}^s)$ for $0 \le k \le m-2$. From our
blow-up description, all arrows except possibly
$\bar{\psi}_{m+1}^*$ in following commutative diagram
\[
    \xymatrix{\mathrm{Pic}(\tilde{F}_{m+1}^s)&
    \mathrm{Pic}(\tilde{F}_m^s)\ar@{=}[l]_{\tilde{\psi}_{m+1}^*}\\
    \mathrm{Pic}(F_{m+1}^{ss})\ar[u]_{\pi_{m+1}^*}&
    \mathrm{Pic}(F_m^{ss})\ar[l]_{\psi_{m+1}^*}\ar[u]_{\pi_m^*}\\
    \mathrm{Pic}(F_{m+1}\git G)\ar[u]&
    \mathrm{Pic}(F_m \git G)\ar[l]_{\bar{\psi}_{m+1}^*}\ar[u]}
\] are
injective, and thus the bottom arrow $\bar{\psi}_{m+1}^*$ is also
injective. Hence $\mathrm{Pic}(F_{m+1}\git G)$ contains the
pull-back of $\mathrm{Pic}(F_m \git G)$ as a subgroup. Also, for
the quotient map $p : \tilde{F}_{m+1}^s \to F_{m+1}\git G$, $p^*
D_1^S = \tilde{\Sigma}_{m+1}^S$ for $|S| = m$. Let $H$ be the
subgroup of $\mathrm{Pic}(\tilde{F}_{m+1}^s)$ generated by the
images of $\bar{\psi}_{m+1}^* \mathrm{Pic}(F_m \git G)$ and the
divisors $D_1^S$ with $|S| = m$. By definition, the image of
$\mathrm{Pic}(F_{m+1}\git G)$ contains $H$. Now by checking the
action of stabilizers on the fibers of line bundles, it is easy to
see that no line bundles in  $\mathrm{Pic}(\tilde{F}_{m+1}^s)- H$
descend to $F_{m+1}\git G$. Hence we have
\begin{equation}\label{mzeo1}
    \mathrm{Pic}(\Mzeo) = \mathrm{Pic}(F_{m+1} \git G)
    = \bar{\psi}_{m+1}^* \mathrm{Pic}(F_m \git G) \oplus
    \bigoplus_{1 \in S, |S| = m}\ZZ D_1^S.
\end{equation}

For an $S$ with $|S|=2$, $\Sigma_m^S\git G$ contains the blow-up
center $\Sigma_m^{S'} \cap \Sigma_m^{{S'}^c}\git G$ if $S \subset
S'$ or $S \subset {S'}^c$. So $\bar{\psi}_{m+1}^* (D_0^S)$ is the
sum of $D_{1}^S$ and a linear combination of divisors $D_1^{S'}$
for $S' \supset S$ or ${S'}^c\supset S$ with $|S'| = m$. From this
and (\ref{mzeo1}), we get the following by an obvious basis
change:
\[
    \mathrm{Pic}(\Mzeo) = \bigoplus_{1 \in S, |S| = m}\ZZ D_1^S
    \oplus \bigoplus_{i=1}^{n-1} \ZZ D_1^{\{i,i+1\}}
    \oplus \ZZ D_1^{\{1,n-1\}}.
\]
The rest of the proof is identical to the odd $n$ case and so we
omit it.
\end{proof}


\bibliographystyle{amsplain}

\end{document}